\documentclass[11pt]{article}
\usepackage{amsmath,amssymb,amsthm,mathrsfs}
\usepackage{graphicx}
\usepackage{xcolor}
\usepackage{booktabs}
\usepackage{multirow}
\usepackage{subcaption}
\usepackage[authoryear,round]{natbib}
\usepackage{hyperref}
\hypersetup{
  colorlinks=true,
  linkcolor=blue,
  citecolor=blue,
  urlcolor=blue
}

\makeatletter
\def\@fnsymbol#1{\ensuremath{\ifcase#1\or *\or \ddagger\or
   \mathsection\or \mathparagraph\or \|\or **\or
   \ddagger\ddagger\else\@ctrerr\fi}}
\makeatother

\theoremstyle{definition}
\newtheorem{lemma}{Lemma}[section]
\newtheorem{proposition}{Proposition}[section]

\newtheorem{definition}{Definition}[section]

\newtheorem{corollary}{Corollary}[section]
\newtheorem{remark}{Remark}[section]

\newcommand{\EE}{\mathbb{E}}
\newcommand{\RR}{\mathbb{R}}

\newcommand{\NN}{\mathbb{N}}

\newcommand{\MM}{\mathbb{M}}
\newcommand{\WW}{\mathbb{W}}

\usepackage{relsize}
\newcommand{\T}{{\mathsmaller {\rm T}}}

\def\vhalf{\textstyle{\frac 1 2}}
\def\vfrac#1#2{{\textstyle{\frac{#1}{#2}}}}
\def\pr{\mathop{\rm pr}\nolimits}
\def\tr{\mathop{\rm tr}\nolimits}
\def\vech{\mathop{\rm vech}\nolimits}

\newcommand{\argmax}{\mathop{\mathrm{argmax}}}

\title{On inferential equivalence classes of causal models}
\author{
  Charlotte Edgar\thanks{Department of Mathematics, Imperial College London} \;\;\quad
  Heather Battey$^*$ 
}
\date{\today}

\begin{document}

\maketitle

\begin{abstract}
				Causal models in the same Markov equivalence class are, in the absence of strong assumptions, statistically indistinguishable at any sample size. For modest sample sizes there is also the possibility that several such classes are compatible with the data, pointing to a confidence set of causal models as the appropriate presentation of evidence. Non-identifiability of Gaussian causal models from the same Markov equivalence class corresponds to a plurality of inverse-covariance representations in the unconstrained parametrisation of \citet{CW1993}. This parametrisation, avoiding conic constraints that would otherwise complicate distributional approximations, facilitates construction of, and theoretical analysis for, a confidence set of causal models based on standard likelihood theory. By drawing on a geometric formulation of \citet{Evans2020}, we provide insight into which causal models are most likely to be included in the confidence set when the true parameter values of the generating model are at the borderline of detectability, delineating those models whose inclusion probabilities are stable at the nominal level, slowly decaying with sample size, and quickly decaying with sample size. We also study settings in which two or more causal models are in operation, exploring how the mixture weights, and the  geometry of the models in the mixture relative to candidate models, interact with the inclusion probabilities. The purpose of the paper is to probe, from the perspective of structural properties of the true causal mechanism, the limits of what is achievable in causal inferential settings.

\bigskip

\noindent \emph{Some key words:} causation; confidence sets of etiological explanations; local geometry; Markov equivalence; mixtures of causal models; model uncertainty.
\end{abstract}

	\section{Introduction}\label{secIntro}

	Establishing causation is often the idealised objective of scientific endeavours even when caution is adopted in the use of the word. In some contexts, it is natural to designate one variable $Y$ as the outcome, all others being prior to $Y$ and thus potentially causal. This is a classical regression setting, where non-zero regression coefficients are suggestive of a causal relationship, any indirect effects of treatments or exposures possibly being explained away by intermediate variables. A conceptually different framework is where there is no a priori reason to suppose that any variable is prior to another, and interest lies in ascertaining the directions of any dependencies between the observable random variables $Y_1,\ldots,Y_d$. Such dependence is often represented as a directed acyclic graph with nodes for random variables, and directed edges for direct causal relationships. 

In the absence of strong assumptions, it is usually not possible to learn a directed acyclic graph from observational data unless it belongs to a singleton Markov equivalence class. For a given generating mechanism, there is a different causal model in the same non-singleton Markov equivalence class that can achieve the same distribution of observable quantities; these are therefore indistinguishable on the basis of physical observations unless context supplies the ordering. There is the possibility, in addition, that several or many equivalence classes are compatible with the data for modest sample sizes, pointing to a confidence set of causal models as the appropriate presentation of evidence. While causal models within a Markov equivalence class are statistically indistinguishable at any sample size, those within a confidence set are statistically indistinguishable from the true generating mechanism at the observed sample size and at the chosen significance level. A confidence set of causal models can therefore be viewed as an inferential equivalence class, an evidential analogue of the Markov equivalence class for the true explanation. 

\citet{Evans2020} showed that the ability to distinguish between two models at small sample sizes is determined by their relative geometry at the point in the parameter space where the models intersect, a favourable geometry implying a faster rate of distinguishability than for models with an unfavourable geometry. We reframe his discussion for pairs of models in terms of probabilities of erroneous causal models being retained in the confidence set. While for sufficiently large sample size, only the Markov equivalence class for the true explanation is retained, our interest here is in low-information settings in which the generating mechanism departs from a completely unconnected graph by a negligible quantity on the borderline of detectability. There is, under this notional regime, a clear separation in the sample-size-dependent probabilities of inclusion in the confidence set for each erroneous causal model. Our construction and analysis is simplified considerably by using a non-standard parametrisation of the Gaussian model due to \citet{CW1993}.

A possibly unexplored but very relevant setting is that in which two or more causal models generate the observable data, different observational units contributing data from different causal mechanisms. For this, the generating process can be expressed as a mixture of causal models on account of the label being unknown. Ideally, the confidence set would include all models in the mixture, but intuitively, only causal models for which the mixture weight is sufficiently large are included with the nominal probability, the permissible weighting being determined by the relative geometry of the constituent models. By formulating a double-asymptotic regime in the mixture weights and signal strengths away from the point at which the models intersect, we probe the extent to which inferential equivalence classes are able to detect both models in a two-component mixture, exploring three-component mixtures by simulation.

\section{Related literature and contributions}\label{secLiterature}

Uncertainty over the statistical model tends not to be emphasised, selection of a single explanation in the light of the data being the more popular and more easily-communicated mode of presentation. That sparsity makes the model itself the object of inference was highlighted in the context of high-dimensional regression by \citet{CB2017}, who emphasised the multiplicity of compatible representations and the dangers of binary decisions in reporting scientific evidence, arguing instead for a confidence set of sparse models. The work reflected earlier comments in the same vein by \citet{Cox1968} and repeated over many years \citep{Cox1977, CoxSnell1974, CoxSnell1989}. The construction of \citet{FY2015} for low-dimensional regression is analogous. These ideas found little traction, but a string of recent papers have pursued closely related directions; see \citet{BRT} for a detailed literature review until 2026. With the exception of \citet{Drton2026}, none of this literature concerns causal models. \citet{Drton2026} proposed a bootstrap construction of a confidence set of causal orderings and established its asymptotic validity. 

The contributions of the present paper principally concern perspective and understanding rather than methodology, but we note as part of this development a simpler construction than that used by \citet{Drton2026}, based on a likelihood-ratio assessment in an unconstrained parametrisation of \citet{CW1993}. The parametrisation is such that sparsity induced by missing edges respects positive definiteness, thereby avoiding conic constraints and related difficulties. The simplicity of the formulation means that relatively standard distributional approximations can be applied, allowing the resulting confidence sets of causal models to be connected to the geometric equivalence classes of \citet{Evans2020}. 

As far as we are aware, the importance of mixtures of causal models has not been emphasised in the statistical literature. An indirect reference to their importance was made by the sociologist \citet{Goldthorpe2025} in connection with the triangulation of findings from different types of study:
\begin{quote}
	``\emph{Such research may be of particular importance where $Y$ is seen as resulting not from a single process but from several different causal processes, which need to be disentangled and their relative importance assessed.}'' \citep[][p.6]{Goldthorpe2025}
\end{quote}
Triangulation of evidence is an important open problem, addressed only briefly in \S \ref{secDiscuss}, but in \S \ref{secMixtures} we explore the extent to which a confidence set of causal models is capable of recovering all explanations when two or more causal models are in operation via a convex mixture.

Proofs in the supplementary material are based on established ideas but are not a direct application of existing results, as the confidence-set perspective treats models asymmetrically via the usual analogue of proof by contradiction. Instead, the geometric insights of \citet{Evans2020} need to be embedded within the classical analysis of likelihood-ratio tests under contiguous alternatives, best approached via \citet{vdV2000}.

\section{Some initial insights in three variables}\label{secInitInsight}

We start by providing some initial insight into the relative distinguishability of simple causal models in three variables and how this is reflected in the behaviour of the confidence sets. The details of implementation are not at this point important, and are deferred to \S \ref{secConstruction}. More general formal statements are given in \S \ref{secCalibration}.

Consider three variables $X$, $Y$, and $Z$. Assuming that there is one missing directed edge, there are 12 possible V-configurations, represented in Table \ref{tabVConfig} alongside the corresponding causal ordering. These are grouped by Markov equivalence class, the labelling of classes, as well as the labelling of models, being otherwise arbitrary.

\begin{table}[h]
	\begin{center}
		\begin{tabular}{|cccc|}
			\hline
			\multirow{2}{*}{model} & \multirow{2}{*}{equivalence class} & \multirow{2}{*}{causal ordering} & \multirow{2}{*}{graphical representation} \\
			&                                           &                                  &                                           \\
			1     & 1 & $\{X,Z\}, \{Y\}$      & $X \rightarrow Y  \leftarrow Z$ \\
			&   &                       &                                 \\
			2     & 2 & $\{Y\}, \{X, Z\}$     & $X \leftarrow Y \rightarrow Z$  \\
			3     & 2 & $\{Z\},\{Y\}, \{X\}$  & $X \leftarrow Y \leftarrow Z$ \\
			4     & 2 & $\{X\}, \{Y\}, \{Z\}$ & $X \rightarrow Y \rightarrow Z$ \\
			&   &                       &                                 \\
			5     & 3 &    $\{Y,Z\}, \{X\}$   & $Y \rightarrow X \leftarrow Z$ \\
			&   &                       &                                 \\
			6     & 4 & $\{X\}, \{Y,Z\}$      & $Y \leftarrow X \rightarrow Z$  \\
			7     & 4 & $\{Z\},\{X\}, \{Y\}$  & $Y \leftarrow X \leftarrow Z$   \\
			8     & 4 & $\{Y\}, \{X\}, \{Z\}$ & $Y \rightarrow X \rightarrow Z$ \\
			&   &                       &                                 \\
			9     & 5 &   $\{X,Y\}, \{Z\}$    & $X \rightarrow Z \leftarrow Y$ \\
			&   &                       &                                 \\
			10    & 6 &   $\{Z\}, \{X,Y\}$    & $X \leftarrow Z \rightarrow Y$ \\
			11    & 6 &  $\{Y\},\{Z\}, \{X\}$ & $X \leftarrow Z \leftarrow Y$   \\
			12    & 6 & $\{X\}, \{Z\}, \{Y\}$ & $X \rightarrow Z \rightarrow Y$ \\
			\hline                                       
		\end{tabular}
	\end{center}
	\medskip
	\caption{Causal dependencies in 3 variables and their Markov equivalence classes.\label{tabVConfig}}
\end{table}

Let $a_{ij}$ denote the strength of the effect of variable $i$ on variable $j$. When all such non-zero effects are sufficiently strong, or when the sample size $n$ is sufficiently large, a valid $\alpha$-level confidence set of causal models must include the true model with probability $1-\alpha$. By \emph{true model} we mean the model containing the true generating mechanism as described by the relevant row of Table \ref{tabVConfig}, the model accounting for other aspects including the strength of the associations and the distribution of the outcomes. There is, corresponding to each \emph{true model}, a \emph{true Markov equivalence class} determined by the value in the second column. Under the Gaussianity assumption of the present paper, the confidence set includes, by construction unless further identifying assumptions are made, all models in the true Markov equivalence class with the same inclusion probability as for the true model. As the signal to noise ratio or sample size is decreased, other models may also be present in the confidence set and those that are represented with highest $n$-dependent probability are all members of an equivalence class that intersect tangentially, rather than transversally, in a sense made rigorous by \citet{Evans2020} and reviewed in \S \ref{secCalibration}. Figure \ref{figCoverage} illustrates this through simulation for the twelve models in Table \ref{tabVConfig}.

\begin{figure}
	
	\begin{center}
		
		\includegraphics[trim=0.082in 0.1in 0.2in 0.2in, clip,width=0.33\linewidth]{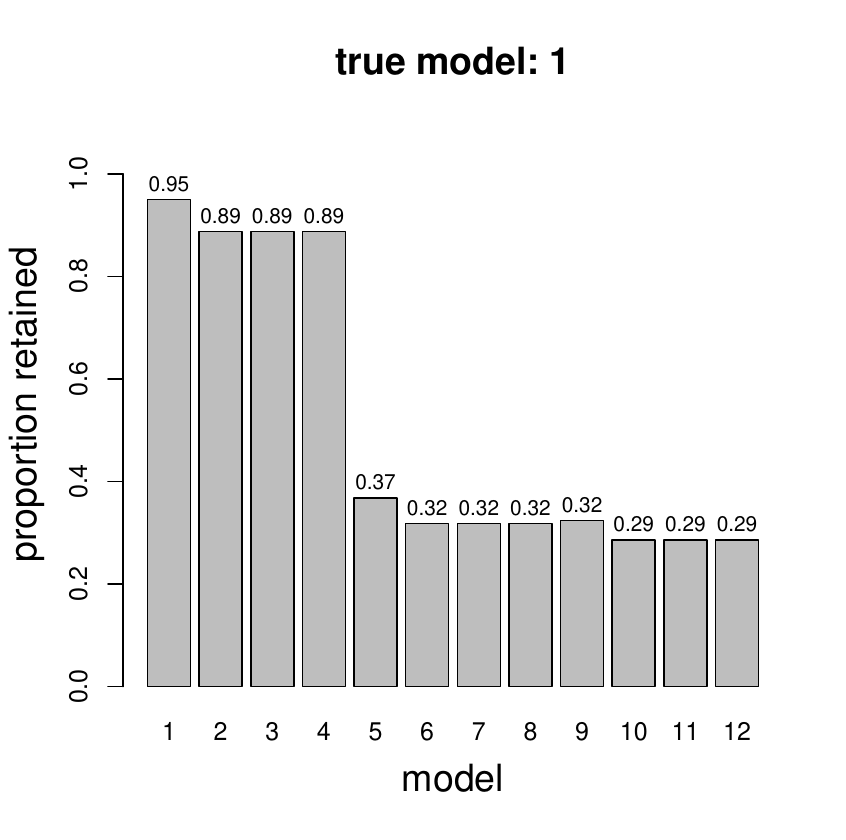}
		\includegraphics[trim=0.31in 0.1in 0.2in 0in, clip,width=0.32\linewidth]{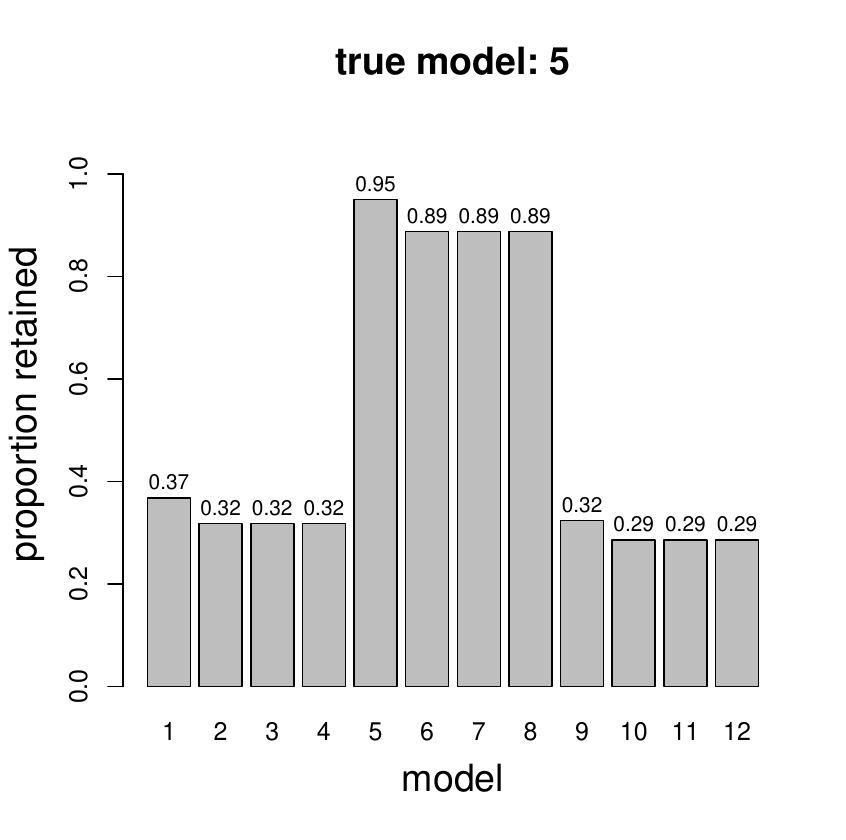} 
		\includegraphics[trim=0.31in 0.1in 0.2in 0in, clip,width=0.32\linewidth]{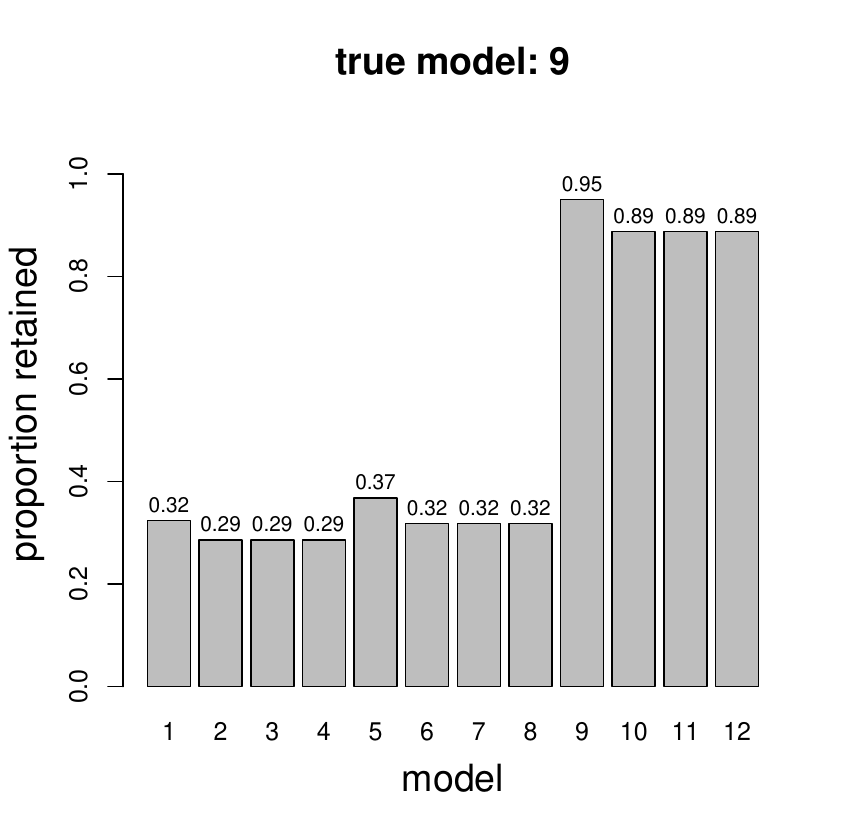}

		\includegraphics[trim=0.082in 0.1in 0.2in 0.2in, clip,width=0.33\linewidth]{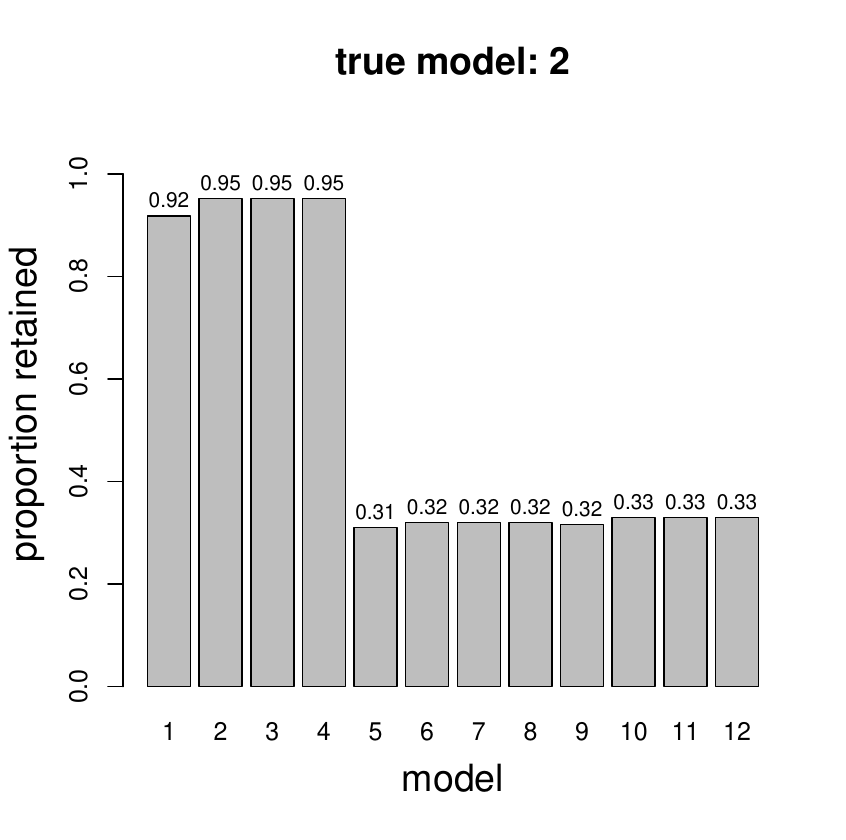} 
		\includegraphics[trim=0.31in 0.1in 0.2in 0in, clip,width=0.32\linewidth]{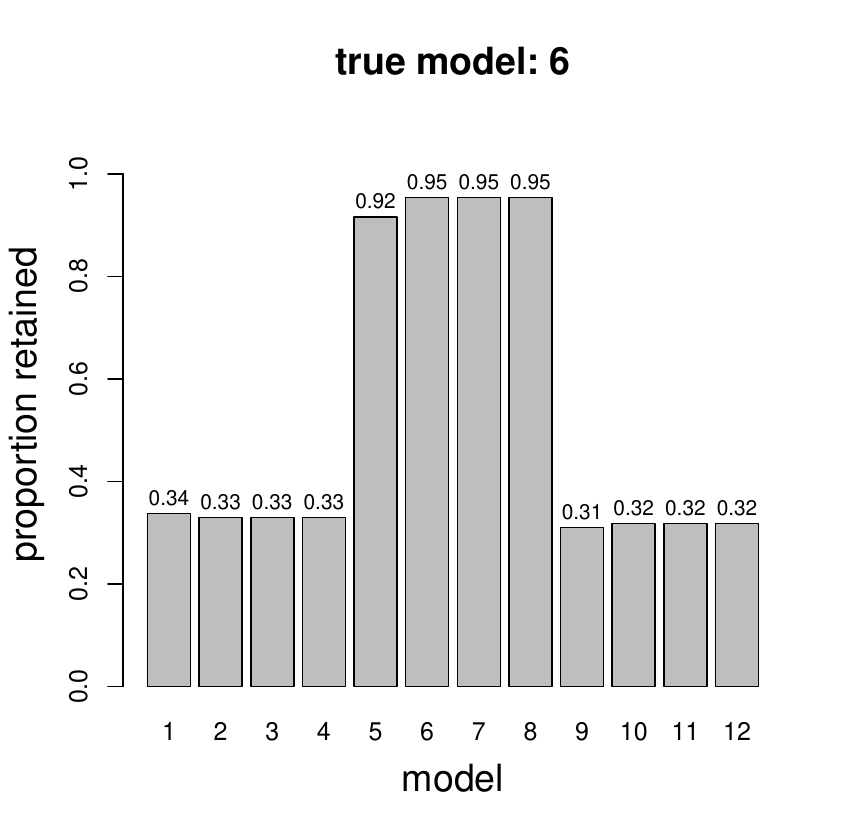} 
		\includegraphics[trim=0.31in 0.1in 0.2in 0in, clip,width=0.33\linewidth]{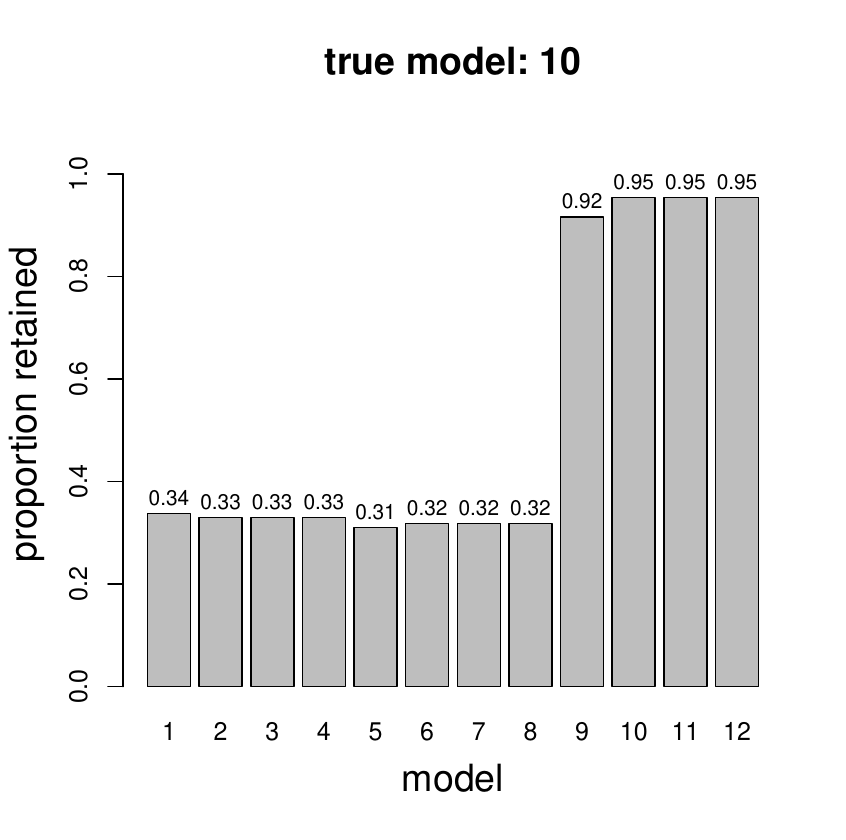}

		\includegraphics[trim=0.082in 0.1in 0.2in 0.2in, clip,width=0.33\linewidth]{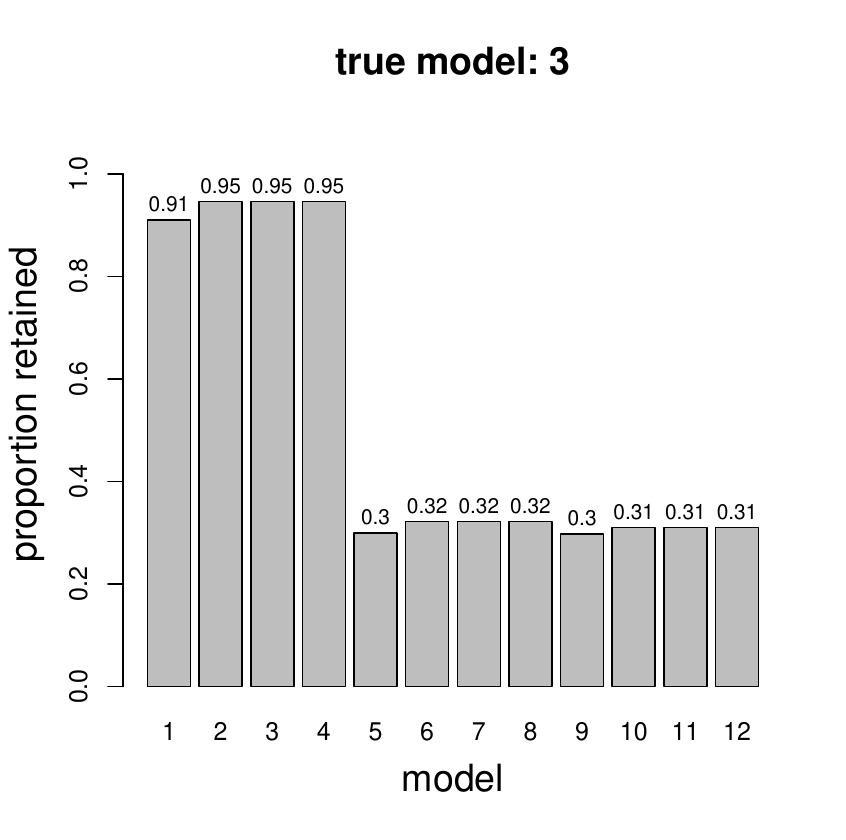} 
		\includegraphics[trim=0.31in 0.1in 0.2in 0in, clip,width=0.33\linewidth]{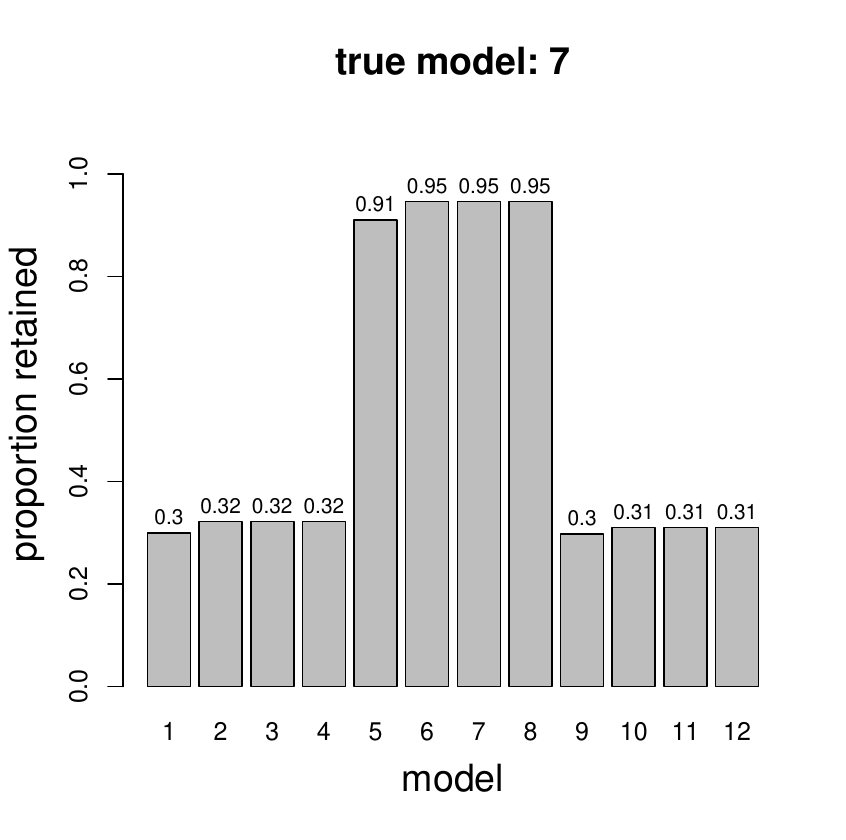}
		\includegraphics[trim=0.31in 0.1in 0.2in 0in, clip,width=0.32\linewidth]{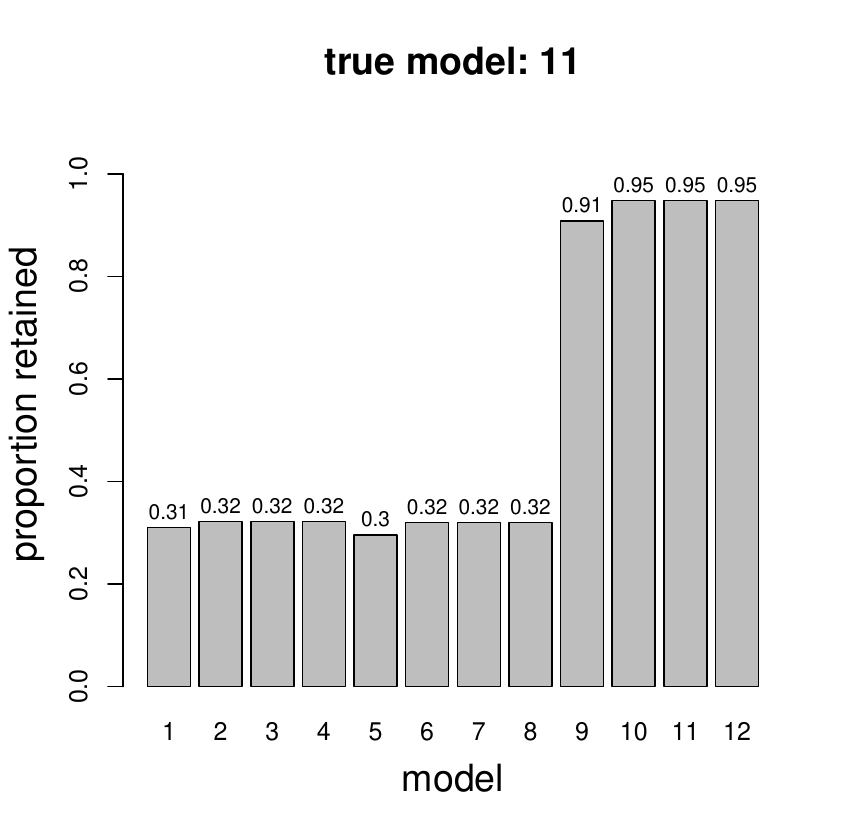} 		
		
		\includegraphics[trim=0.082in 0.1in 0.2in 0.2in, clip,width=0.33\linewidth]{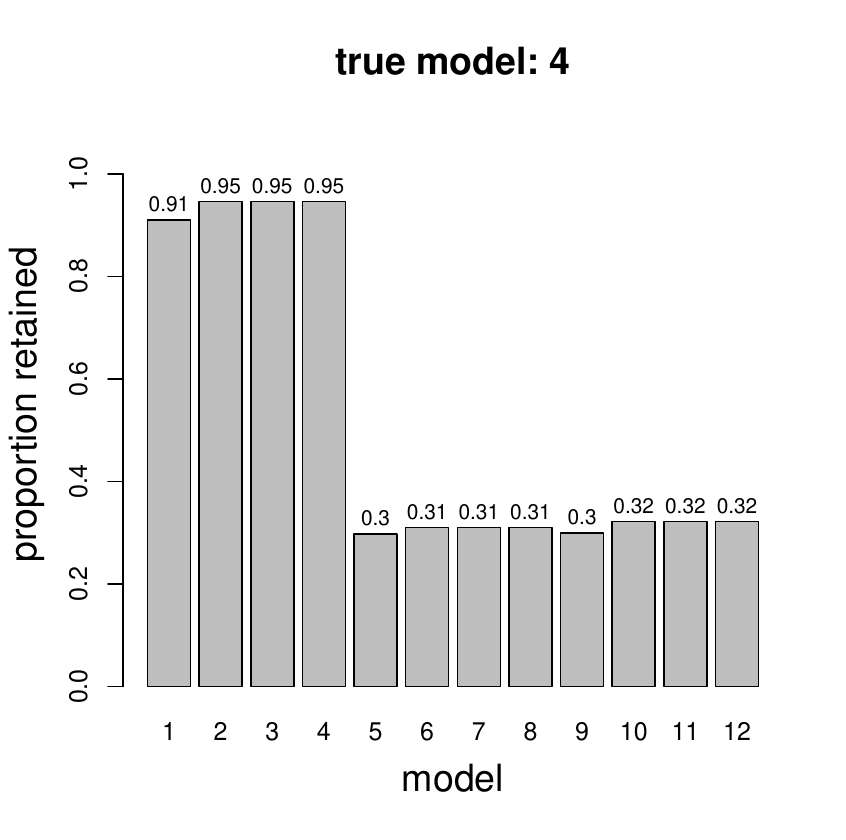} 
		\includegraphics[trim=0.31in 0.1in 0.2in 0in, clip,width=0.32\linewidth]{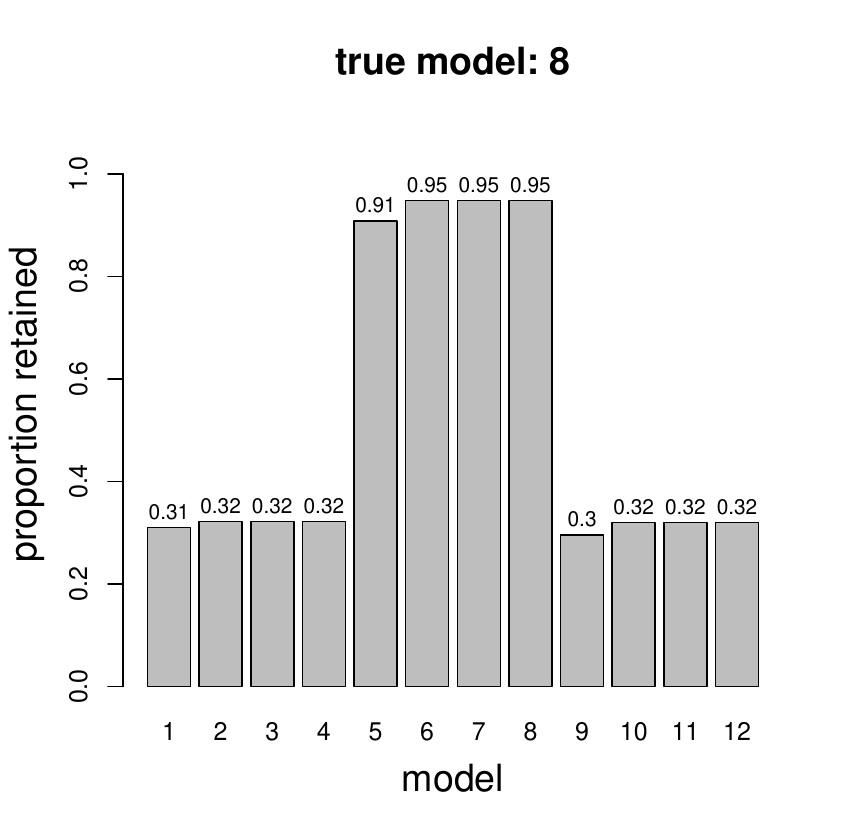}
		\includegraphics[trim=0.31in 0.1in 0.2in 0in, clip,width=0.32\linewidth]{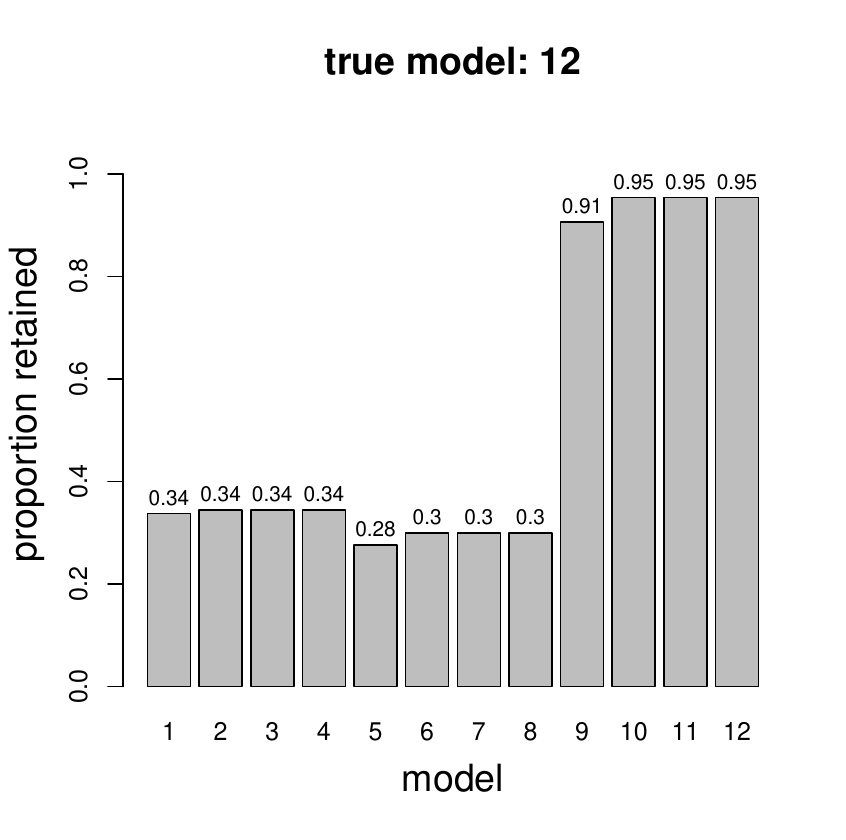} 
		
	\end{center}   
	\caption{Empirical probabilities of inclusion of each model in the $95\%$ confidence set of models when the true model is as specified. The sample size is $n=100$.\label{figCoverage}}
	
\end{figure}

\section{Construction of confidence sets of causal models}\label{secConstruction}

While the construction of \citet{Drton2026} is more involved, there is a simple approach to constructing confidence sets of causal models in a Gaussian setting based on a large number of likelihood-ratio assessments against a suitable encompassing model. An important aspect of this formulation, avoiding theoretical difficulties due to conic constraints, is the use of an unconstrained parametrisation due to \citet{CW1993}.

We consider graphs that are directed and without cycles, and attach to this formulation a Gaussian model for the outcomes $Y=(Y^1,\ldots, Y^d)^\T$, assumed mean-zero. This implies that that there is some ordering of the variables, described by a permutation matrix $P$, such that $LPY=P\varepsilon$, where $L$ is a lower-triangular matrix with ones along the diagonal and $\varepsilon$ is a mean-zero Gaussian random vector with diagonal covariance matrix $D=e^\Delta$. The off-diagonal entries of $A=P^{\T}LP$ are, in the notation of \citet{WermuthCox2004},
\begin{equation}\label{eqABeta}
	a_{ij}=-\beta_{ij.\text{par}_i\backslash j},
\end{equation}
i.e., the negative regression coefficient of variable $j$ in a regression of variable $i$ on variable $j$ and any other variables that appear earlier in the causal ordering. These causal predecessors are often referred to as parents of variable $i$, notated as $\text{par}_i$ in \eqref{eqABeta}.

The assumption that the underlying graph is acyclical implies that $\Sigma$, $\Sigma^{-1}$ and $(A,\Delta)$ specify three parametrisations of the distribution of $Y$ in $d(d+1)/2$ degrees of freedom once the ordering of the variables is fixed, the relation between the three being
\begin{align*}
	\Sigma = & \; A^{-1} e^\Delta (A^{-1})^\T, \\
	\Sigma^{-1} = & \; A^{\T} e^{-\Delta} A.
\end{align*}
While $\Sigma^{-1}\rightarrow (A, \Delta)$ is a valid reparametrisation once the ordering is fixed, for an unspecified causal ordering, the non-zero entries of $A$ could be in any off-diagonal position compatible with acyclicity, i.e.~the map is not injective. Thus, the representation $A^{\T} e^{-\Delta} A \mapsto \Sigma^{-1}(A,\Delta)$ is over-parametrised in the absence of a causal ordering, explaining non-identifiability up to Markov equivalence. For a fixed causal ordering, the $(A, \Delta)$ parametrisation has the advantage of being unconstrained, i.e.~for any given model specified in the $(A, \Delta)$ parametrisation, the non-zero entries can vary arbitrarily without violating positive definiteness of $\Sigma$ or $\Sigma^{-1}$. 

Let $(Y_i)_{i=1}^n$ be independent and identically distributed copies of $Y$. The log-likelihood function, in exponential-family canonical form, depends on the data through the sufficient statistic $S_n=n^{-1}\sum_{i=1}^n Y_i Y_i^\T$ as
\begin{equation}\label{eqLogLikSig}
	\ell(\Sigma^{-1}; S_n) = \frac{n}{2}\{\log \det(\Sigma^{-1}) - \text{tr}(S_n \Sigma^{-1})\}.
\end{equation}
By the theory of exponential families, or by direct matrix differentiation, the maximum likelihood estimator $\hat\Sigma^{-1}$, if it exists, sets the sufficient statistic equal to its expected value, the solution to the resulting likelihood equation being $\hat\Sigma^{-1}=S_n^{-1}$ provided that $S_n$ is positive definite.

Under any hypothesised causal model, the parametrisation $A^{\T} e^{-\Delta} A \mapsto \Sigma^{-1}(A,\Delta)$ is injective, and the non-zero entries of $A$ and $\Delta$, being unconstrained, present no difficulties for maximisation of the log-likelihood function,
\begin{equation}\label{eqLogLikA}
	\ell(A, \Delta; S_n) = \frac{n}{2}\{\log \det(A^{\T} e^{-\Delta} A) - \text{tr}(S_n A^{\T} e^{-\Delta} A)\}.
\end{equation}
The log-likelihood ratio statistic for assessing a given causal model $m_0$ with corresponding $A_0$ is thus
\begin{equation}\label{eqLogLikRatio}
	\Lambda_0 = 2\{\ell(\hat{\Sigma}^{-1};S_n) - \ell(\hat{A}_0, \hat{\Delta}_0;S_n)\},
\end{equation}
where $(\hat{A}_0, \hat{\Delta}_0)$ maximises \eqref{eqLogLikA} over entries of $A$ and $\Delta$ that respect the zero constraints specified by model $m_0$. There is a slight notational imprecision in \eqref{eqLogLikRatio}, in that the evaluation points of the log-likelihood function are given in different parametrisations. We could have alternatively written \eqref{eqLogLikRatio} in terms of $\hat\Sigma_0^{-1} = \hat{A}_0^\T e^{-\hat{\Delta}_0} \hat{A}_0$. 

Suppose that the true causal model, $m_0$ say, is specified by the positions of the zero entries in $A_0$. By standard likelihood theory, $\Lambda_0$ from equation \eqref{eqLogLikRatio} converges in distribution to a $\chi^2_{\nu}$ random variable, where $\nu$ is the number of zero constraints in $A_0$ relative to an unconstrained model in $d(d+1)/2$ unknown parameters. For each hypothesised model, the corresponding likelihood ratio statistic is calibrated against this $\chi^2_\nu$ distribution, extremity pointing to a contradiction of the model with the data, in the relevant statistical sense. All causal models not rejected at level $\alpha$ comprise a confidence set of causal models, or equivalently a confidence set of Markov equivalence classes.

What is unclear from the standard theory is which erroneous causal models, other than those in the same Markov equivalence class as $m_0$, are likely to also be retained in the confidence set at small sample sizes. That there are differences in retention probabilities is apparent from Figure \ref{figCoverage}. 
Section \ref{secCalibration} establishes the relevant theory, embedding the geometric perspectives of \citet{Evans2020} into the local likelihood-ratio theory of \citet{vdV2000}.

\section{Inferential and geometric equivalence classes}\label{secCalibration}

\subsection{Notation and definitions}

We extend the above notation by specifying for any causal model $m_j$ the constrained maximum likelihood estimator
\begin{equation}\label{eqPrecParam}
	\hat{\Sigma}^{-1}_j = \hat{A}_j^{\T} e^{-\hat\Delta_j} \hat A_j.
\end{equation} 
We will always take $m_0$ to be the true model, thus if model $m_1$ is assumed, the estimated matrices on the right hand size of \eqref{eqPrecParam} will converge to matrices $(A_1^0, D_1^0)$, typically not equal to $(A_0, D_0)$, where $D_0 = e^{-\Delta_0}$. If $m_1$ is Markov equivalent to $m_0$, $\hat \Sigma_1^{-1}$ converges to the true precision matrix $\Sigma_0^{-1}$, as $m_1$ induces the same likelihood function in the $\Sigma^{-1}$ parametrisation. We use the condensed notation $\gamma=\text{vech}(\Sigma)\in \RR^{d(d+1)/2}$, where $\text{vech}(\cdot)$ is the half-vectorisation map for a symmetric matrix. As before, $\hat\gamma_j$ corresponds to the maximum likelihood estimate under model $m_j$, with corresponding true value $\gamma_j$, thus $\gamma_j=\gamma_k$ and $\hat\gamma_j=\hat\gamma_k$ if and only if $m_j \sim m_k$, where $\sim$ denotes Markov equivalence. 

\citet{Evans2020} introduced the following notion of $c$-equivalence between pairs of statistical models. The condensed notation $\gamma^*\in m_0\cap m_1$ here means that a distribution parametrised by a vector $\gamma$ belongs to both models when evaluated at $\gamma^*$. In other words $\gamma^*$ characterises the point in the parameter space at which the the two models intersect. In principle there could be a countable or uncountable set of such values $\gamma^*$, but in the regular settings covered in the present paper, $\gamma^*$ is a vector in a singleton set.

\begin{definition}[$c$-equivalence \citep{Evans2020}]\label{defCEquiv}
	Models $m_0$ and $m_1$ are said to be $c$-equivalent at the point of intersection $\gamma^*\in m_0\cap m_1$ if
	\begin{equation}\label{eqLim}
		\lim_{\varepsilon \downarrow 0} \varepsilon^{-c} d_H\bigl(m_0 \cap N_{\varepsilon}(\gamma^*), m_1 \cap N_{\varepsilon}(\gamma^*)\bigr)=0,
	\end{equation}
	where $N_{\varepsilon}(\gamma^*)$ is a Euclidean ball of radius $\varepsilon$ centred at $\gamma^*$ and $d_H(A,B)$ is the Hausdorff distance between sets, given by
	\[
	d_H(A,B) = \max\biggl\{\sup_{a\in A} \inf_{b\in B}\|a-b\|_{2}, \sup_{b\in B} \inf_{a\in A}\|a-b\|_{2}\biggr\}.
	\]
\end{definition}

\begin{figure}
	\begin{center}
		\includegraphics[trim=0in 0.5in 0in 0in, clip,width=0.9\linewidth]{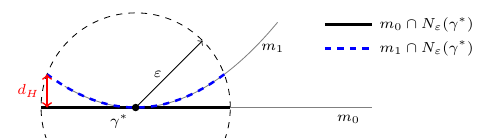}
	\end{center}        
	\caption{Two simplified models $m_0=\{(\gamma,0): \gamma \in \RR\}$ and $m_1=\{(\gamma, \vhalf \gamma^2): \gamma\in\RR\}$ intersecting at $\gamma^*$. The Hausdorff distance constrained the the Euclidean ball is $d_H=\vhalf \varepsilon^2=o(\varepsilon)$. The model is 1-equivalent and 2-near equivalent.\label{figCEquiv}}
\end{figure}

Definition \ref{defCEquiv} says that models $m_0$ and $m_1$ are $c$-equivalent at $\gamma^*$ if the distance between the sets in a ball of radius $\varepsilon$ is $o(\varepsilon^c)$. A weaker definition that is easier to verify and visualise is $c$-near equivalence, in which the $o(\varepsilon^c)$ is replaced by $O(\varepsilon^c)$. Equivalently the zero limit in \eqref{eqLim} is replaced by a finite constant. Because $O(\varepsilon^{c+1})=o(\varepsilon^{c})$ for $\varepsilon\rightarrow 0$, $(c+1)$-near equivalence implies $c$-equivalence, but the converse is not true in general. As noted by \citet{Evans2020}, however, $(c-1)$-equivalence implies $c$-near equivalence if the models are $k$-times differentiable and $c\leq k-1$. Figure \ref{figCEquiv} depicts a simple example of two 2-near equivalent models. 

For the 12 causal models in Table \ref{tabVConfig}, consider the set of Gaussian distributions satisfying the independence constraints implied by each model. The skeletons of each directed acyclic graph are obtained by replacing directed edges with undirected ones giving $X-Y-Z$, $Y-X-Z$ and $X-Z-Y$, and two Markov equivalence classes corresponding to each skeleton. The singleton Markov equivalence class for each skeleton consists of the model with a sink node as the central node, meaning two incoming arrows. Models in different equivalence classes that share the same skeleton are 2-near-equivalent, as illustrated by \citet[][Example 2.4]{Evans2020} and formalized in his Theorem 4.2. Models with different skeletons are 1-near-equivalent.

\subsection{Local geometry and properties of the confidence set} 

The purpose of this section is to probe the limits of what is achievable from the confidence set construction, relating these to the structural properties of the model to which the data generating process belongs.

For sufficiently large sample size $n$, only the models that are Markov equivalent to $m_0$ will be contained in the confidence set at the nominal probability $1-\alpha$. To provide insight for small $n$, we start by considering notional contiguous models $m_1^{(n)}$ whose strength of departure from $m_0$ is at the borderline of detectability, by which we mean shrinking at the same rate as the standard parametric estimation error. More formally, let the parameter vectors characterising $m_0^{(n)}$ and $m_1^{(n)}$ converge to $\gamma^*\in m_0 \cap m_1$ as $n\rightarrow \infty$. By construction, this implies that all elements of $m_1^{(n)}$ have parameters converging to the true parameter $\gamma_0^{(n)}$, i.e.~$m_1^{(n)}$ consists of contiguous alternatives in the sense of \citet[][ch.~7]{vdV2000}. 
Proposition \ref{propCSProperties} restates the insights of \citet[][Corollary 2.11]{Evans2020} in terms of properties of the confidence set of models.

\begin{proposition}\label{propCSProperties} 
	Let $m_0^{(n)}$ be the true model with parameter $\gamma_0^{(n)}$ in the interior of its parameter space and let $m_1^{(n)}$ be a contiguous postulated model, specifying local departures from $\gamma_0^{(n)}$ in the direction of $h_1\in \RR^{d(d+1)/2}$. Specifically, the permissible parameters under model $m_1^{(n)}$ are $\gamma_1^{(n)} = \gamma_0^{(n)} + h_1/\sqrt{n}$. In terms of the point of intersection, $\gamma_0^{(n)}=\gamma^* + a_0^{(n)}$ and $\gamma_1^{(n)}=\gamma^* + a_1^{(n)}$, so that $h_1=\sqrt{n}(a_1^{(n)}-a_0^{(n)})$. Let $\mathcal{M}$ be the set of all models $m_1$ not rejected at level $\alpha$ by a likelihood ratio test of $m_1$ against the unconstrained model. As $n\rightarrow \infty$:
	\begin{enumerate}
		\item[(i)] All models $m_1^{(n)}\sim m_0^{(n)}$ are included in $\mathcal{M}$ with probability $1-\alpha$ irrespective of $a_0^{(n)} \asymp a_1^{(n)}$.
		\item[(ii)] All models $m_1^{(n)}$ that are $c$-equivalent to $m_0^{(n)}$ at $\gamma^*$ are included in $\mathcal{M}$ with probability $1-\alpha+o(1)$ if and only if $a_0^{(n)}\asymp a_1^{(n)} \lesssim n^{-1/2c}$. Otherwise they are included with probability $o(1)$.
	\end{enumerate}
	Moreover, if any single model in a Markov equivalence class is included in $\mathcal{M}$, then all models in the same class are also included.
\end{proposition}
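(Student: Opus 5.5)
The plan is to reduce every claim to the asymptotic distribution of the likelihood-ratio statistic $\Lambda_1$ for a postulated model $m_1$, computed under the true sequence $\gamma_0^{(n)}$. We then read off inclusion probabilities from the location of its noncentrality parameter. Three facts about the \citet{CW1993} parametrisation will be used throughout:
\begin{enumerate}
\item[(a)] it is unconstrained, so each $m_j$ is a smooth submanifold (locally, near $\gamma^*$) of the open cone of positive-definite matrices;
\item[(b)] the unconstrained maximum likelihood estimator is $\hat\Sigma^{-1}=S_n^{-1}$;
\item[(c)] the log-likelihood \eqref{eqLogLikSig} depends on $(A,\Delta)$ only through $\Sigma^{-1}=A^{\T}e^{-\Delta}A$.
\end{enumerate}

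The final claim and part (i) come first and are essentially algebraic. If $m_1\sim m_0$, the sets $\{A^{\T}e^{-\Delta}A\}$ generated by the two zero patterns coincide. By (c), the profile log-likelihoods are therefore identical, so $\hat\gamma_1=\hat\gamma_0$ and $\Lambda_1=\Lambda_0$ exactly, for every sample. The same identity shows that inclusion in $\mathcal M$ is a property of the Markov equivalence class, which is the final claim. Part (i) then follows because $\Lambda_0\to\chi^2_\nu$ under $\gamma_0^{(n)}$ by standard theory, and $\gamma_0^{(n)}$ lies in the interior. This holds for any sequence $a_0^{(n)}$, including one tending to $\gamma^*$, because the Fisher information is continuous and nonsingular at $\gamma^*$, so local asymptotic normality holds uniformly along the sequence \citep[][ch.~7]{vdV2000}.

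For part (ii), first assume $m_1\not\sim m_0$. By local asymptotic normality and contiguity, with $Z_n=\sqrt n(\hat\gamma-\gamma^*)$ approximately Gaussian with covariance $I(\gamma^*)^{-1}$, the statistic $\Lambda_1$ equals $\inf_{\gamma\in m_1}n\|\hat\gamma-\gamma\|^2_{I(\gamma^*)}+o_p(1)$. The key step is to express this through the distance from the true point $\gamma_0^{(n)}$ to $m_1$, namely $\delta_n=\inf_{\gamma\in m_1}\|\gamma_0^{(n)}-\gamma\|$.
\begin{enumerate}
\item[(1)] If $m_1$ is $c$-equivalent to $m_0$ at $\gamma^*$ and $\|a_0^{(n)}\|\lesssim n^{-1/(2c)}$, then by Definition~\ref{defCEquiv} applied with $\varepsilon\asymp\|a_0^{(n)}\|$ we get $\delta_n=o(\varepsilon^c)=o(n^{-1/2})$. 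Hence $m_1$ contains a point $\tilde\gamma_n$ with $\sqrt n(\tilde\gamma_n-\gamma_0^{(n)})\to0$. Also, $m_1$ and $m_0$ share the same tangent cone at $\gamma^*$ whenever $c\ge1$, so locally $m_1$ is approximated at the $n^{-1/2}$ scale by the affine space $\tilde\gamma_n+T$. Consequently $\Lambda_1$ has the same $\chi^2_\nu$ limit as $\Lambda_0$, and inclusion has probability $1-\alpha+o(1)$.
\item[(2)] Conversely, if the decay is slower, so that $\sqrt n\,\delta_n\to\infty$ (here I would use the near-equivalence lower bound that $\delta_n\gtrsim\varepsilon^{c'}$ for the exact order $c'$, interpreting $c$ in the statement as the sharp exponent), then $\Lambda_1\ge n\lambda_{\min}(I)\,\delta_n^2(1+o_p(1))\to\infty$ and the inclusion probability is $o(1)$.
\end{enumerate}
The condition $a_0^{(n)}\asymp a_1^{(n)}$ ensures that both the true point and the competing points of $m_1$ lie at a common radius $\varepsilon$, so the Hausdorff bound controls $\delta_n$ in both directions.

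I expect the main obstacle to be the converse direction together with the uniformity in step (1). Definition~\ref{defCEquiv} gives only an upper bound on the Hausdorff distance within a ball, whereas nonrejection requires a lower bound on the distance of the specific point $\gamma_0^{(n)}$ to $m_1$. I would first try to handle this using smoothness of both models, via the Cholesky-type map $(A,\Delta)\mapsto\Sigma$, which is polynomial–exponential and hence analytic. Smoothness allows a Taylor expansion of the distance function along the direction of $a_0^{(n)}$, with a leading term of exact order $\|a_0^{(n)}\|^{c}$ at generic points, following \citet[][Corollary 2.11]{Evans2020}. The local quadratic approximation of $\Lambda_1$ must also hold uniformly over $m_1\cap N_{C\varepsilon}(\gamma^*)$. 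This uniformity follows from the regularity of the Gaussian exponential family and compactness of a neighbourhood of $\gamma^*$ in the interior.
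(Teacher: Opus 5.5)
Your route is the paper's. Markov-equivalent models induce the same profile likelihood, so $\Lambda_1=\Lambda_0$ and inclusion is a class property. For (ii), the local likelihood-ratio limit of \citet[][Thm.~16.7]{vdV2000} has noncentrality equal to $\sqrt n$ times the distance from $\gamma_0^{(n)}$ to $m_1$. Under $c$-equivalence this is $o(1)$ when $\|a_0^{(n)}\|\lesssim n^{-1/(2c)}$; the paper phrases this as $h_1=\sqrt n(a_1^{(n)}-a_0^{(n)})=o(1)$. That half of your argument is sound. You do not even need equal tangent cones: since $\sqrt n\|\tilde\gamma_n-\gamma_0^{(n)}\|\to0$, the $n$-fold laws under $\gamma_0^{(n)}$ and under $\tilde\gamma_n\in m_1$ merge in total variation, so $\Lambda_1$ inherits its null $\chi^2_{\nu_1}$ limit.

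The gap is step (2), and it cannot be closed as planned, because the ``only if'' in (ii) is false as stated. The paper does not prove it either; it only asserts ``otherwise, $m_1^{(n)}$ can be distinguished''. Two things go wrong.

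First, $c$-equivalence and $c$-near-equivalence only bound a supremum over the ball from above. They give no lower bound at the particular point $\gamma_0^{(n)}$. Take models 1 and 2 of Table~\ref{tabVConfig} at a diagonal $\gamma^*$, so that $m_0=\{\sigma_{XZ}=0\}$ and $m_1=\{\sigma_{XZ}\sigma_{YY}=\sigma_{XY}\sigma_{YZ}\}$. Any interior sequence with $\sigma_{XY}=0$ lies in $m_0\cap m_1$, so $m_1$ is retained with probability $1-\alpha$ at every rate. A non-degeneracy condition on the direction of $a_0^{(n)}$ therefore has to be assumed; it cannot be derived.

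Second, even generically, your two steps cannot use the same exponent. If $\delta_n\asymp\|a_0^{(n)}\|^{c'}$, then $d_H$ is not $o(\varepsilon^{c'})$, so $c$-equivalence holds only for $c<c'$. Taking $c=c'$ as ``the sharp exponent'' simply makes the hypothesis of (ii) false. With $c<c'$, on the window $n^{-1/(2c)}\ll\|a_0^{(n)}\|\ll n^{-1/(2c')}$ one still has $\sqrt n\,\delta_n\to0$, so $m_1$ is retained with probability $1-\alpha+o(1)$. Concretely, models 1 and 2 are $1$-equivalent. With all nonzero $a_{ij}=n^{-1/3}$, you get $\delta_n\asymp n^{-2/3}$ and $\sqrt n\,\delta_n\to0$, which contradicts ``otherwise $o(1)$'' with $c=1$.

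What your argument does prove, with non-degeneracy and $c'$ the exact order of $\delta_n$, is a trichotomy:
\begin{itemize}
\item retention probability is $1-\alpha+o(1)$ if and only if $\|a_0^{(n)}\|=o(n^{-1/(2c')})$;
\item it is $o(1)$ if and only if $\|a_0^{(n)}\|\gg n^{-1/(2c')}$;
\item at $\|a_0^{(n)}\|\asymp n^{-1/(2c')}$ there is a noncentral $\chi^2$ regime, with retention probability bounded away from both $0$ and $1-\alpha$.
\end{itemize}
This is consistent with the paper's own remark after the proposition (indistinguishability iff $o(n^{-1/(2c)})$ under $c$-near-equivalence) and with its simulations at rate $n^{-1/4}$. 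The fix is to restate (ii) in this form, not to look for a sharper lower bound.
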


\begin{remark}
	The vector-valued sequences $a_0^{(n)}$ and $a_1^{(n)}$ potentially have different entries but since it is their qualitative behaviour with increasing $n$ that is relevant, the same notation will be used to refer to the vectors themselves and the rate of convergence of their entries, context resolving ambiguity. 
\end{remark}

Proposition \ref{propCSProperties} is stated most conveniently in terms of $c$-equivalence, but $c$-near-equivalence has a more direct geometric interpretation. Since $(c-1)$-equivalence implies $c$-near-equivalence, all models $m_1^{(n)}$ that are $c$-near-equivalent to $m_0^{(n)}$ at $\gamma^*$ are included in $\mathcal{M}$ with probability $1-\alpha + o(1)$ if and only if $a_0^{(n)} \asymp a_1^{(n)}= O(n^{1/2(c-1)})$. In other words, $c$-near-equivalent models are indistinguishable from $m_0^{(n)}$ if and only if $a_0^{(n)} \asymp a_1^{(n)} = o(n^{-1/2c})$. 

Proposition \ref{propCSProperties} sheds light on how the local geometry of the two models interacts with the behaviour of the confidence set. It is, however, more natural in the present context, where the two models are not on an equal footing, to consider a single asymptotic regime in which only the parameters associated with the true model $m_0$ are considered as notionally approaching the point of intersection $\gamma^*$. In other words, no constraints are imposed in the fitting of $m_1$ that would force it to be contiguous to $m_0$. Proposition \ref{propKL} shows that the Kullback-Leibler minimising parameter inherits its behaviour from $a_0^{(n)}$, pointing to a sharp threshold. For this we define $\mathbb{M}_n(\gamma):= \ell(\gamma)/n$ to be the rescaled log-likelihood function and $\mathbb{M}_0^{(n)}(\gamma):=\EE\mathbb{M}_n(\gamma)$, which depends on $n$ via the expectation operator under the sequence of true parameter values $\gamma_0^{(n)}=\gamma^* + a_0^{(n)}$. Let $\Gamma_0$ and $\Gamma_1$ be the parameter spaces under models $m_0$ and $m_1$ respectively. 

\begin{proposition}\label{propKL}
	Let $\hat{\gamma}_1^{(n)} :=  \argmax_{\gamma \in \Gamma_1} \mathbb{M}_n(\gamma)$ be the maximum likelihood estimator assuming $m_1$ and let $\gamma_1^0 :=  \argmax_{\gamma \in \Gamma_1} \mathbb{M}_0^{(n)}(\gamma)$, the Kullback-Leibler minimising projection. Under a sequence of true parameter values of the form $\gamma_0^{(n)}=\gamma^* + a_0^{(n)}$, the Kullback-Leibler projection satisfies $\gamma_1^0 = \gamma^* + O(a_0^{(n)})$ and therefore $\hat{\gamma}_1^{(n)} = \gamma^* + O_p(\max\{n^{-1/2}, a_0^{(n)}\})$.
\end{proposition}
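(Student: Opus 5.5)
The plan is to treat the Kullback--Leibler projection as the solution of a smooth parametric optimisation problem indexed by the true parameter, and then apply an implicit-function argument at the intersection point $\gamma^*$. In the Gaussian setting, $\mathbb{M}_0^{(n)}(\gamma) = \vhalf\{\log\det \Sigma^{-1}(\gamma) - \tr(\Sigma(\gamma_0^{(n)})\Sigma^{-1}(\gamma))\}$. This depends on $n$ only through $\Sigma_0^{(n)} = \Sigma(\gamma_0^{(n)})$, and it is jointly smooth (indeed real-analytic) in $(\gamma,\gamma_0)$ on a neighbourhood of $(\gamma^*,\gamma^*)$.

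I would work in the unconstrained $(A,\Delta)$ coordinates for $m_1$, writing $\theta$ for its free entries. Positive definiteness is then automatic and $\Gamma_1$ is the smooth injective image of an open set. Define $F(\theta,\gamma_0) := \nabla_\theta \mathbb{M}(\theta;\gamma_0)$.

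\textbf{Step 1 (base point).} At $\gamma_0=\gamma^*$, the true distribution lies in $m_1$, so by Gibbs' inequality $\theta^*$ (with $\gamma(\theta^*)=\gamma^*$) maximises the population criterion. Hence $F(\theta^*,\gamma^*)=0$.

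\textbf{Step 2 (nondegeneracy).} The Hessian $-\partial_\theta F(\theta^*,\gamma^*)$ is the Fisher information of $m_1$ at $\theta^*$. It is positive definite because the map $\theta\mapsto\Sigma^{-1}$ is an injective immersion for a fixed ordering, with $A$ triangular with unit diagonal and the Jacobian of full rank. This is the step I expect to be the main obstacle. One must check that the Jacobian of $(A,\Delta)\mapsto A^\T e^{-\Delta}A$ restricted to the sparsity pattern of $m_1$ has full column rank even at $\gamma^*$, where some entries of $A$ may vanish (for instance at the empty graph). I would verify it directly: the differential is $dA^\T e^{-\Delta}A + A^\T e^{-\Delta}dA - A^\T e^{-\Delta}d\Delta A$. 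Conjugating by $A^{-\T}(\cdot)A^{-1}$ turns this into $e^{-\Delta}dA A^{-1} + (\cdot)^\T - e^{-\Delta}d\Delta$, which separates strictly triangular and diagonal parts, so it is injective.

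\textbf{Step 3 (implicit function theorem).} The implicit function theorem then gives a smooth map $\gamma_0\mapsto\theta(\gamma_0)$ with $\theta(\gamma^*)=\theta^*$ near $\gamma^*$. Lipschitz continuity yields $\gamma(\theta(\gamma_0^{(n)}))-\gamma^* = O(\|\gamma_0^{(n)}-\gamma^*\|) = O(a_0^{(n)})$. To identify $\theta(\gamma_0^{(n)})$ with the global argmax $\gamma_1^0$, I would argue uniqueness. The population criterion converges uniformly on compacts to its limit at $\gamma^*$, which has a unique, well-separated maximiser (identifiability plus coercivity of the Gaussian likelihood as $\Sigma^{-1}$ approaches the boundary or diverges). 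So the argmax eventually lies in the implicit-function neighbourhood, where it is the unique critical point.

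\textbf{Step 4 (estimator).} For the estimator, I would apply standard M-estimation theory (van der Vaart, ch.~5) to the smooth, locally strongly concave criterion. By the same identifiability argument, $\hat\theta$ is consistent for $\theta^*$. A Taylor expansion of the score gives $\hat\theta-\theta(\gamma_0^{(n)}) = -\{\partial_\theta F\}^{-1}\nabla_\theta\{\mathbb{M}_n-\mathbb{M}_0^{(n)}\}(\theta(\gamma_0^{(n)})) + o_p(\cdot) = O_p(n^{-1/2})$. This holds because the empirical score is an average of i.i.d.\ terms linear in $S_n - \Sigma_0^{(n)} = O_p(n^{-1/2})$, uniformly along the sequence since $\Sigma_0^{(n)}\to\Sigma(\gamma^*)$. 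The triangle inequality and the smoothness of $\theta\mapsto\gamma$ then give $\hat\gamma_1^{(n)}-\gamma^* = O_p(\max\{n^{-1/2},a_0^{(n)}\})$.
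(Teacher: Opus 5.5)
Your proof is correct, but it reaches both conclusions by a different route from the paper's. The paper gets $\hat\gamma_1=\gamma_1^0+O_p(n^{-1/2})$ from the shelling device of van der Vaart and Wellner's Theorem 3.2.5. That device combines the curvature bound of Lemma \ref{lemmavdVW} with a Markov bound on the increment $\WW_n(\gamma)-\WW_n(\gamma_1^0)$, which is linear in $S_n-\Sigma_{\gamma_0}$. For the projection, the paper only records $\MM_0^{(n)}(\gamma)-\MM_0(\gamma)=\vhalf\tr\{\Sigma_\gamma^{-1}(\Sigma_{\gamma^*}-\Sigma_{\gamma_0})\}$ and says the conclusion follows. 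You instead use the implicit function theorem for the projection and a classical score expansion for the estimator, both in the unconstrained $(A,\Delta)$ coordinates.

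Your route makes explicit two points the paper leaves implicit. First, a criterion perturbation of sup-norm $O(a_0^{(n)})$ plus quadratic curvature only gives $\gamma_1^0-\gamma^*=O(\{a_0^{(n)}\}^{1/2})$. The linear rate also needs the perturbation's gradient to be $O(a_0^{(n)})$ and the Hessian to be nondegenerate, which is exactly what your Steps 2--3 encode. Second, the paper justifies Lemma \ref{lemmavdVW} only by maximality of $\gamma_1^0$. Maximality gives a semi-definite Hessian, whereas the peeling argument needs a definite one with constants uniform in $n$. Your immersion computation, with continuity at $(\theta^*,\gamma^*)$, supplies both. Working in $(A,\Delta)$ coordinates also avoids expanding over the curved set $\Gamma_1$ in $\vech(\Sigma)$ coordinates. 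The peeling route's advantage is that it never differentiates the score, but that generality is unnecessary in this smooth finite-dimensional problem.

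A shorter argument is also available. For a Gaussian DAG with fixed parent sets, the likelihood factorises into node-wise regressions. Hence $\gamma_1^0=g(\Sigma_{\gamma_0})$ and $\hat\gamma_1=g(S_n)$, where $g$ maps a covariance to the parent-regression coefficients and residual variances, and hence to the fitted covariance. This $g$ is smooth near $\Sigma_{\gamma^*}$ with $g(\Sigma_{\gamma^*})=\gamma^*$. Both claims then follow from local Lipschitz continuity of $g$ and $S_n-\Sigma_{\gamma_0}=O_p(n^{-1/2})$.
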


The rate of convergence of $\gamma_1^0$ to $\gamma^*$ along with the usual $O(n^{-1/2})$ fluctuations induced through estimation can be viewed as an implicit constraint on the model class $m_1$ analogous to $m_1^{(n)}$ from Proposition \ref{propCSProperties}. Thus, Propositions \ref{propCSProperties}  and \ref{propKL} can be interpreted jointly. Suppose that $a_0^{(n)}\gtrsim n^{-1/2}$, i.e.~the signal decays more slowly than the estimation error. Then both $\gamma_0$ and $\hat\gamma_1$ are in a ball of radius $a_0^{(n)}$ centred at $\gamma^*$. From Proposition \ref{propCSProperties}, models $m_1 \not\sim m_0$ are excluded from the confidence set if and only if $a_0^{(n)}\gtrsim n^{-1/2c}$ where $c$ is the $c$-equivalence class of $m_1$ relative to $m_0$. In other words, if the signal decays too sharply relative to the estimation error then there is no hope of detecting $m_1 \not\sim m_0$ as erroneous. The permissible rate of decay decreases with increasing $c$, i.e.~a larger $c$ requires a slower rate $a_0^{(n)}$.

\begin{figure}[h!]
	\begin{center}
		\includegraphics[trim=0in 0in 0in 0in, clip,width=0.45\linewidth]{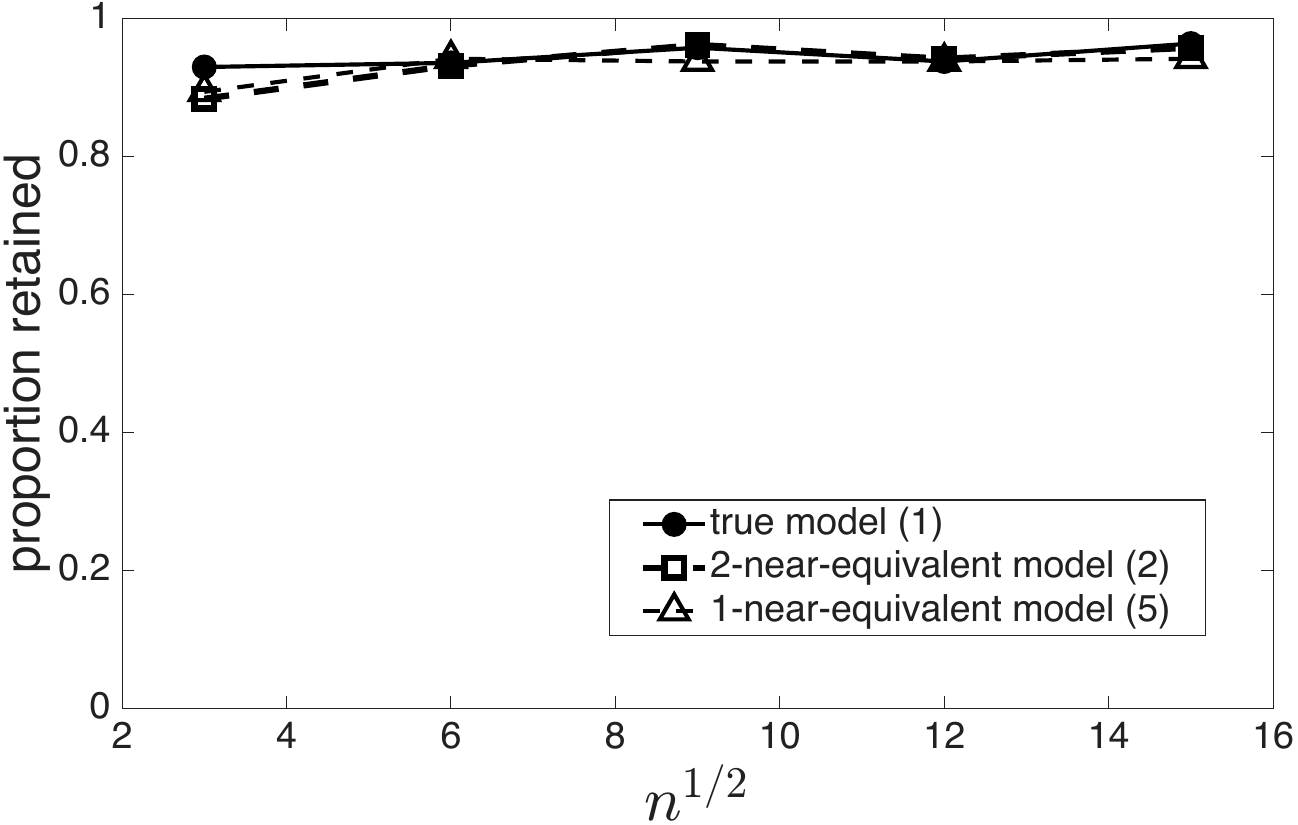}
		\includegraphics[trim=0in 0in 0in 0in, clip,width=0.45\linewidth]{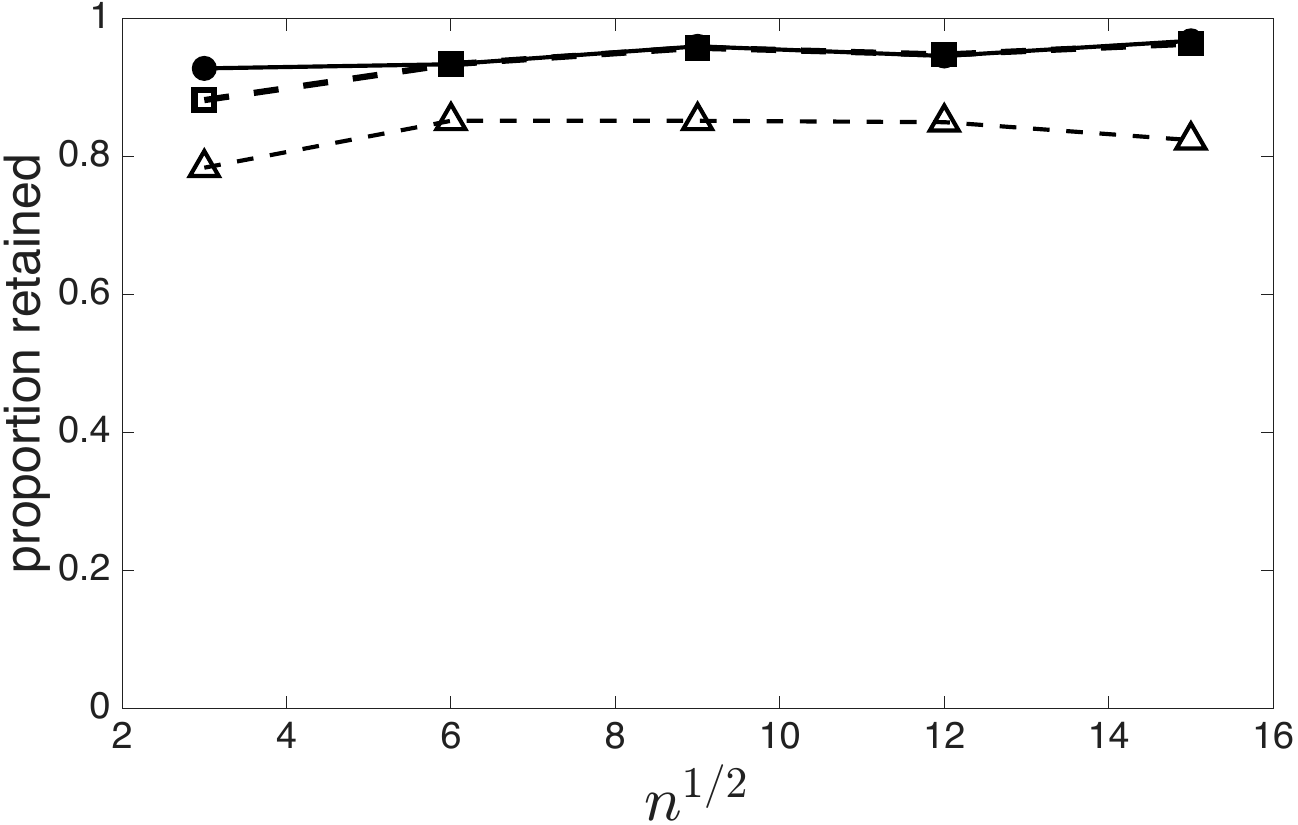} 
		
		\includegraphics[trim=0in 0in 0in 0in, clip,width=0.45\linewidth]{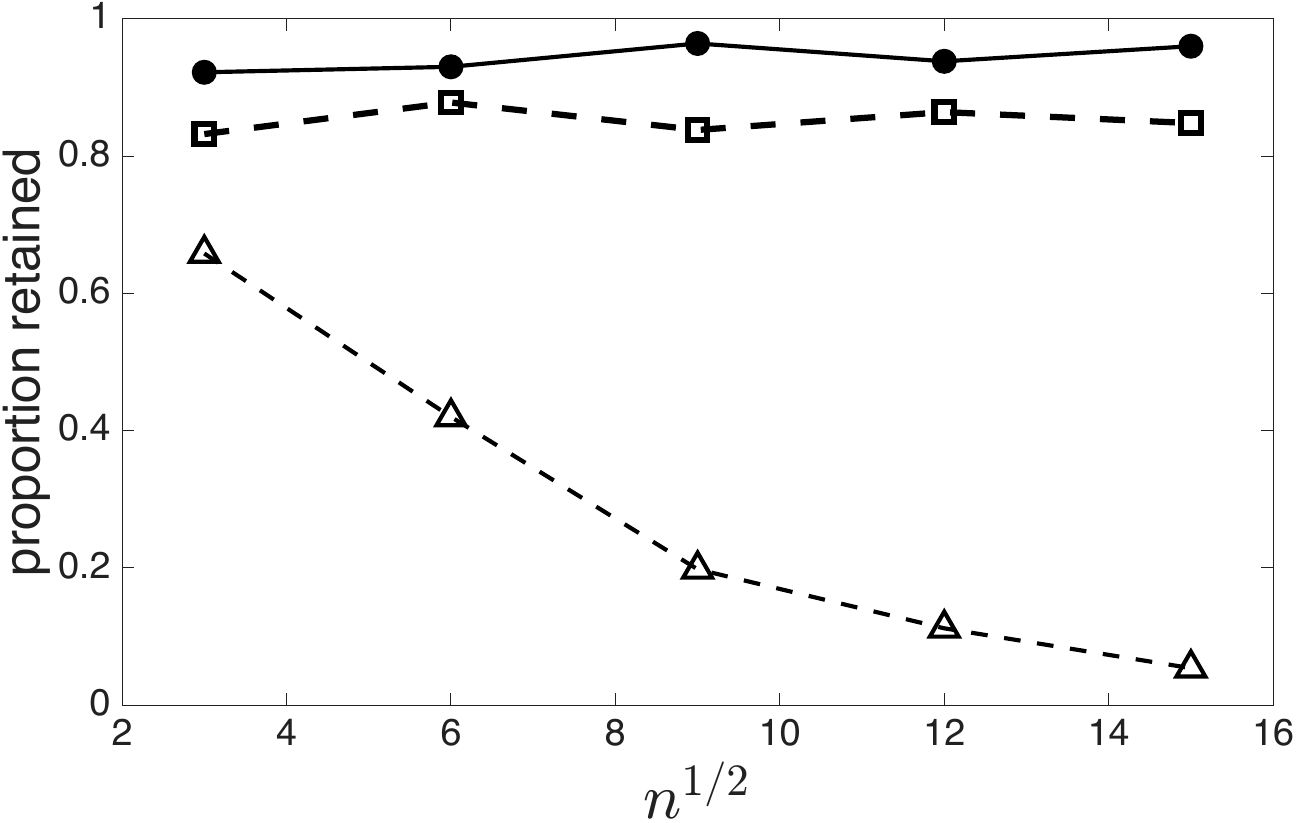} 		
		\includegraphics[trim=0in 0in 0in 0in, clip,width=0.45\linewidth]{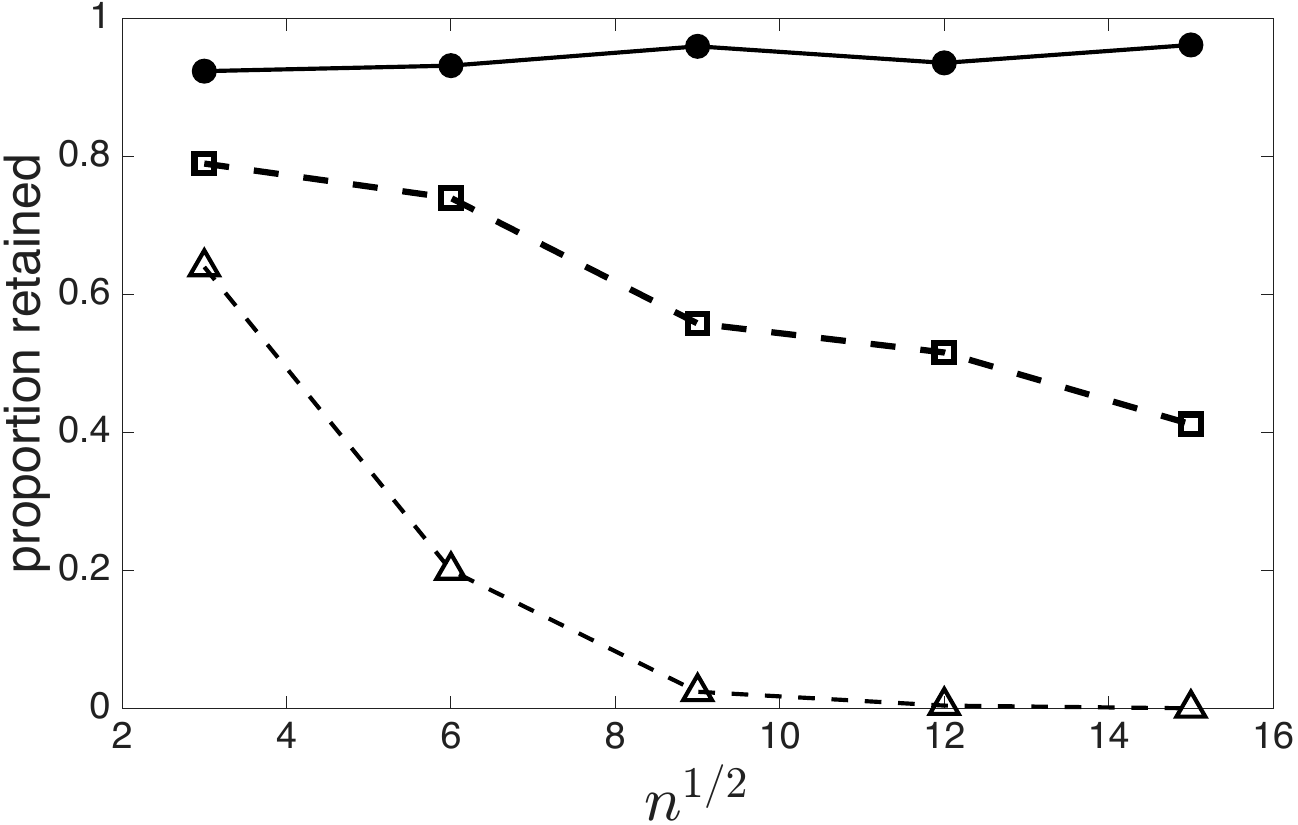} 
		\caption{Simulated retention probabilities, in a nominal 95\% confidence set, for the true model (model 1 in Table \ref{tabVConfig}), a 2-near-equivalent model (model 2) and a 1-near-equivalent-model (model 5) as a function of sample size $n$ for varying signal strengths. Top left to bottom right: $a_0^{(n)} = n^{-1}, n^{-1/2}, n^{-1/4}, n^{-1/6}$. 
			\label{figRates}}
	\end{center}
	
\end{figure}

Figure \ref{figRates} illustrates the permissible rates of decay to distinguish between different models depending on their relationship with the true model in terms of $c$-near-equivalence. Model 1 in Table \ref{tabVConfig} is taken to be the true model $m_0^{(n)}$. For this, the simulated retention probability is close to nominal across all signal strengths. For the 1-near equivalent model (model 2 in Table \ref{tabVConfig}) a signal strength decaying to zero at a rate of $n^{-1/4}$ or slower is needed for the erroneous model to be excluded with high probability at sufficiently large sample size. For the 2-near-equivalent model (model 5 in Table \ref{tabVConfig}), a larger signal strength, decaying at the slow rate $n^{-1/6}$ is needed.

\section{Mixtures of causal models}\label{secMixtures}

In some settings the true causal explanation may well differ between individuals. While ideally the reason for this difference might be understood and the different sets of individuals separated in the analysis, this is not always feasible. On the other hand, the principles of modelling require some stability across individuals \citep[see][for a discussion in a regression setting]{McC2002, McC2022}. Reasonable allowance for different generating mechanisms while respecting the foundations of modelling leads to mixture models in which any individual from a sample is thought of as a draw from a mixture of causal models on account of the label being unknown. The mixture weights correspond to the proportion of individuals for which each causal model is explanatory. A key question is how the confidence sets of models behave in this situation. Intuitively, if the mixture weight on a particular causal model is too small, this lowers the effective signal strength (equivalently sample size) for retaining the model in the confidence set. We thus expect some sharp transitions, with respect to mixture weight, in the ability of the confidence set to cover all true models in the mixture. The ability to detect some models in the mixture will additionally depend on what else is included and its relative geometry. We probe this further, first theoretically for a two-component mixture of Gaussian causal models, and then through simulation for a three-component mixture.

Let $m_{0,1}$ and $m_{0,2}$ be causal models generating probability distributions $F_{0,1}$ and $F_{0,2}$ on the observable outcomes. Then the two-component mixture model is $(1-\theta)F_{0,1} + \theta F_{0,2}$. This should be thought of as specifying that, for an individual $i$ belonging to a group of individuals observed in proportion $(1-\theta)$, the data are generated according to $F_{0,1}$. Assume $F_{0,1}$ and $F_{0,2}$ are mean-zero Gaussian distributions with covariance matrices $\Sigma_{0,1}$ and $\Sigma_{0,2}$ respectively.

Assuming that $\Sigma_{0,1}$ is not equal to $\Sigma_{0,2}$, the two-component mixture model has mean-zero and covariance matrix $\Sigma_{0} = (1-\theta)\Sigma_{0,1} + \theta \Sigma_{0,2}$ but is not Gaussian unless $\theta \in \{0,1\}$. As before, we use the condensed notation $\gamma = \text{vech}(\Sigma) \in \mathbb{R}^{d(d+1)/2}$ and consider notional local models about the point of intersection $\gamma^*\in m_{0,1} \cap m_{0,2} \cap m_1$ where $m_1$ is the postulated model. Let $m_{0,1}^{(n)}, m_{0,2}^{(n)}$ and $m_1^{(n)}$ have elements whose parameters converge to $\gamma^*$ as $n\rightarrow \infty$. By construction, this implies that the parameters associated with all elements of $m_1^{(n)}$ converge to the true parameter $\gamma_0^{(n)}$. A formulation analogous to that of \S \ref{secCalibration} based on contiguous alternatives is constructed by allowing $\theta^{(n)} \rightarrow 0$. Proposition \ref{propMixDist} relates the $c$-equivalence of the mixture and a postulated model $m_1$ to the corresponding $c$-equivalences of the constituent terms in the mixture. This is used in Corollaries \ref{corMix} and \ref{corMix2} to reveal the behaviour of the confidence set in the mixture setting.

\begin{proposition}\label{propMixDist}
	Let $m_{0,1}^{(n)}$ and $m_{0,2}^{(n)}$ be the true models with distributions $F_{0,1}$ and $F_{0,2}$ and parameters $\gamma_{0,1}^{(n)}, \gamma_{0,2}^{(n)}$ in the two-component mixture model $(1-\theta^{(n)})F_{0,1} + \theta^{(n)} F_{0,2}$. Let $\varepsilon:=\theta^{(n)}$ in Definition \ref{defCEquiv}. Then if, for a postulated model $m_1^{(n)}$, the three pairs of models $m_{0,1}^{(n)}$ and $m_{0,2}^{(n)}$, $m_{0,1}^{(n)}$ and $m_1^{(n)}$, and $m_{0,2}^{(n)}$ and $m_1^{(n)}$, are respectively $c$-equivalent, $c_1$-equivalent, and $c_2$-equivalent at $\gamma^*$, then the mixture $m_0^{(n)}$ is $c_m$-equivalent to $m_1^{(n)}$, where 
	\[
	c_m := \max\{\min\{ c, c_1\},\min\{ c, c_2\}\},
	\]
	and the Hausdorff distance satisfies $d_H(m_0^{(n)},m_1^{(n)})=o(\varepsilon^{c_m})$. 
\end{proposition}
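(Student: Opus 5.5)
The plan is to work directly with the covariance parametrisation $\gamma=\vech(\Sigma)$, in which the mixture covariance is linear in the weight:
\[
\gamma_0^{(n)} = (1-\theta^{(n)})\gamma_{0,1}^{(n)} + \theta^{(n)}\gamma_{0,2}^{(n)} = \gamma_{0,1}^{(n)} + \varepsilon\bigl(\gamma_{0,2}^{(n)}-\gamma_{0,1}^{(n)}\bigr),
\]
with $\varepsilon=\theta^{(n)}$. Since the confidence-set construction depends on the data only through $S_n$, and $\EE S_n=\Sigma_0$, the mixture model $m_0^{(n)}$ is identified, for the purpose of the likelihood-ratio calibration, with the set of such convex combinations of points of $m_{0,1}$ and $m_{0,2}$ near $\gamma^*$. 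The proof then reduces to bounding the Hausdorff distance between this set and $m_1$, intersected with $N_\varepsilon(\gamma^*)$, using the triangle inequality for $d_H$ together with the three pairwise $c$-equivalence hypotheses.

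\emph{Step 1: two routes to $m_1$.} Fix a point $\gamma_0=(1-\varepsilon)\gamma_{0,1}+\varepsilon\gamma_{0,2}$ in $N_\varepsilon(\gamma^*)$ with $\gamma_{0,1},\gamma_{0,2}$ in the local models.
\begin{itemize}
\item \emph{Route via $m_{0,1}$.} Take the closest $\tilde\gamma_{0,1}\in m_{0,2}$ to $\gamma_{0,2}$, which is $o(\varepsilon^{c})$ away by $c$-equivalence of $m_{0,1},m_{0,2}$. Together with the fact that both components lie in an $O(\varepsilon)$ ball, this bounds the extra term $\varepsilon(\gamma_{0,2}-\gamma_{0,1})$ up to an $o(\varepsilon^{c})$ discrepancy. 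The point $\gamma_{0,1}$ is then within $o(\varepsilon^{c_1})$ of $m_1$. Hence $\inf_{\gamma\in m_1}\|\gamma_0-\gamma\|=o(\varepsilon^{\min\{c,c_1\}})$.
\item \emph{Route via $m_{0,2}$.} Writing $\gamma_0=\gamma_{0,2}+(1-\varepsilon)(\gamma_{0,1}-\gamma_{0,2})$ and using $c_2$-equivalence gives $o(\varepsilon^{\min\{c,c_2\}})$.
\end{itemize}
Since both bounds hold, the distance is bounded by the better of the two, that is $o(\varepsilon^{c_m})$ with $c_m=\max\{\min\{c,c_1\},\min\{c,c_2\}\}$.

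\emph{Step 2: the reverse direction.} Bound $\sup_{\gamma\in m_1}\inf_{\gamma_0\in m_0}$ similarly. For $\gamma\in m_1\cap N_\varepsilon(\gamma^*)$, pick the nearest point of $m_{0,j}$ (with $j$ achieving the maximum in $c_m$), within $o(\varepsilon^{c_j})$. Then pair it with the point of the other component nearest to it, within $o(\varepsilon^{c})$, to form a mixture point. The convex combination moves it by at most $o(\varepsilon^{c})$, giving $o(\varepsilon^{\min\{c,c_j\}})$. Taking the maximum of the two one-sided bounds gives $d_H(m_0^{(n)},m_1^{(n)})=o(\varepsilon^{c_m})$, which is $c_m$-equivalence by Definition~\ref{defCEquiv}.

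\emph{Main obstacle.} The delicate step is the cross term $\varepsilon(\gamma_{0,2}-\gamma_{0,1})$. Because $\varepsilon$ plays two roles, as the mixture weight and as the ball radius, a naive bound gives only $O(\varepsilon^2)$ rather than something controlled by $c$. I would handle this by choosing the pair $(\gamma_{0,1},\gamma_{0,2})$ with $\gamma_{0,2}$ as the near-projection of $\gamma_{0,1}$ onto $m_{0,2}$, so that the difference is $o(\varepsilon^{c})$ and the cross term is $o(\varepsilon^{c+1})$. The remaining uncertainty is whether this choice is admissible, since the true $\gamma_{0,2}^{(n)}$ is prescribed; if it is not, I would absorb the remainder using that $\min\{c,c_j\}\le c$, and that all parameters converge to the common $\gamma^*$ at a rate tied to $\varepsilon$.
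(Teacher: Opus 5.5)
\textbf{Verdict: there is a genuine gap in Step 1, the one you flag yourself.}

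Your Step 2 is fine. For the half $\sup_{\gamma\in m_1}\inf_{\gamma_0\in m_0}$ you are free to choose the pair that forms the mixture point.

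Step 1 concerns the half $\sup_{\gamma_0\in m_0}\inf_{\gamma\in m_1}\|\gamma_0-\gamma\|$. Here the pair $(\gamma_{0,1},\gamma_{0,2})$ ranges over all of $(m_{0,1}\cap N_\varepsilon(\gamma^*))\times(m_{0,2}\cap N_\varepsilon(\gamma^*))$. So taking $\gamma_{0,2}$ to be a near-projection of $\gamma_{0,1}$ is not admissible.
\begin{itemize}
\item \emph{Routing through $\gamma_{0,1}$} costs $\varepsilon\|\gamma_{0,2}-\gamma_{0,1}\|$. Splitting off the nearest point $\tilde\gamma\in m_{0,1}$ to $\gamma_{0,2}$ disposes of $\varepsilon(\gamma_{0,2}-\tilde\gamma)=o(\varepsilon^{c+1})$. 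It leaves $\varepsilon(\tilde\gamma-\gamma_{0,1})$, a difference of two arbitrary points of $m_{0,1}$ in the ball, which is only $O(\varepsilon^2)$.
\item \emph{Routing through $\gamma_{0,2}$}, as you write it, costs $(1-\varepsilon)\|\gamma_{0,1}-\gamma_{0,2}\|=O(\varepsilon)$. Even routing through the projection of $\gamma_{0,1}$ onto $m_{0,2}$ still leaves $O(\varepsilon^2)$.
\end{itemize}
Your fallback cannot absorb this remainder. $O(\varepsilon^2)$ is $o(\varepsilon^{c_m})$ only when $c_m<2$. Convergence of all parameters to $\gamma^*$ at rate $\varepsilon$ is exactly what produces the $O(\varepsilon^2)$, not what removes it.

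\textbf{The missing idea is Lemma~\ref{lemmaHausDistM01}.} Compare the mixture point not with $\gamma_{0,1}$ but with $(1-\theta)\gamma_{0,1}+\theta\tilde\gamma$. This lies in $m_{0,1}$ when that model is convex, and its displacement from the mixture point is $\theta\,d(\gamma_{0,2},m_{0,1})$. Hence $d_H(m_0,m_{0,1})\le\theta\,d_H(m_{0,1},m_{0,2})=o(\varepsilon^{c+1})$. Symmetrically, let $\tilde\gamma'$ be the nearest point of $m_{0,2}$ to $\gamma_{0,1}$; using $(1-\theta)\tilde\gamma'+\theta\gamma_{0,2}$ gives $d_H(m_0,m_{0,2})\le(1-\theta)\,d_H(m_{0,1},m_{0,2})=o(\varepsilon^{c})$.

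The paper then argues purely at the level of sets. It applies the triangle inequality for $d_H$ along $m_0\to m_{0,1}\to m_{0,2}\to m_1$ and along $m_0\to m_{0,2}\to m_{0,1}\to m_1$. This gives $o(\varepsilon^{\min\{c,c_2\}})$ and $o(\varepsilon^{\min\{c,c_1\}})$, and it keeps the better bound. Because $d_H$ already covers both one-sided halves, no separate Step 2 is needed.

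\textbf{Convexity is an assumption here, not a given.} It is doing real work, and it is not a property of Gaussian DAG models in the $\vech(\Sigma)$ parametrisation. For example, the chain $X\to Y\to Z$ imposes $\sigma_{XZ}\sigma_{YY}=\sigma_{XY}\sigma_{YZ}$, which is not a convex constraint. To close your argument you must either adopt convexity explicitly, or replace it with a curvature bound on the component models. A curvature bound controls the distance from the chord point to the model only to $O(\varepsilon^3)$, so it yields the conclusion only when $c_m<3$.
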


From Proposition \ref{propMixDist}, we can apply Proposition \ref{propCSProperties} with the mixture as the true model $m_0$, giving the analogous statement of Corollary \ref{corMix2}. We first characterise the coverage properties for each constituent model in Corollary \ref{corMix}. This amounts to setting $\varepsilon^{c_1}=0$ and $\varepsilon^{c_2}=0$ in the definition of Hausdorff-distance characterising the notion of $c$-equivalence, as if the postulated model $m_1$ is equal to $m_{0,1}$ or $m_{0,2}$, then the notion of $c_1$-equivalence ($c_2$-equivalence respectively) does not make sense.

\begin{corollary}\label{corMix}
	Let the true mixture distribution $(1-\theta)F_{0,1} + \theta F_{0,2}$ have component distributions in models $m_{0,1}^{(n)}$ and $m_{0,2}^{(n)}$ respectively, contiguous at the point of intersection $\gamma^*$. Let $\gamma_{0,1}^{(n)}=\gamma^* + a_{0,1}^{(n)}$ and $\gamma_{0,2}^{(n)}=\gamma^* + a_{0,2}^{(n)}$ be the corresponding parameters. Suppose that $m_{0,1}^{(n)}$ and $m_{0,2}^{(n)}$ are $c$-equivalent at $\gamma^*$ and let the postulated model $m_1$ be either of the two true ones. Let $\mathcal{M}$ be the set of all models not rejected at level $\alpha$ by a likelihood ratio test of $m_1$ against the unconstrained model. Then with $\theta^{(n)} = O(n^{-1/2})$ as $n\rightarrow \infty$:
	\begin{enumerate}
		\item[(i)]  $m_{0,1}^{(n)}$ is retained in $\mathcal{M}$ with probability $1-\alpha$ irrespective of $a_{0,1}^{(n)} \asymp a_{0,2}^{(n)}$;
		\item[(ii)]  $m_{0,2}^{(n)}$ is retained in $\mathcal{M}$ with probability $1-\alpha + o(1)$ if and only if $a_{0,1} \asymp a_{0,2}^{(n)} \lesssim n^{-1/2c}$. Otherwise it is included with probability $o(1)$. 
	\end{enumerate}
	Moreover, if any model in a Markov equivalence class is retained in $\mathcal{M}$, then all models in the same class are retained.
\end{corollary}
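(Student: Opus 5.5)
The plan is to reduce Corollary \ref{corMix} to Proposition \ref{propCSProperties}, with the mixture playing the role of the true model. The first step is a local expansion of the mixture covariance:
\[
\gamma_0^{(n)} = (1-\theta^{(n)})\gamma_{0,1}^{(n)} + \theta^{(n)}\gamma_{0,2}^{(n)} = \gamma_{0,1}^{(n)} + \theta^{(n)}\bigl(a_{0,2}^{(n)} - a_{0,1}^{(n)}\bigr).
\]
Because $\theta^{(n)} = O(n^{-1/2})$ and $a_{0,2}^{(n)}-a_{0,1}^{(n)}\to 0$, the mixture parameter lies within $o(n^{-1/2})$ of $\gamma_{0,1}^{(n)}$.

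The likelihood ratio statistic \eqref{eqLogLikRatio} depends on the data only through $S_n$. Under the mixture, $\sqrt{n}(\mathrm{vech}(S_n)-\gamma_0^{(n)})$ is asymptotically Gaussian by the multivariate central limit theorem. Its covariance is that of the mixture's fourth moments, and this converges to the Gaussian one at $\gamma^*$ because both components converge there. Consequently the law of $\Lambda$ under the mixture is the same, to first order, as under the Gaussian sequence with parameter $\gamma_0^{(n)}$.

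I will write this last step via Le Cam's third lemma as in \citet[][ch.~7]{vdV2000}. An alternative route is a direct delta-method argument on the smooth map $\mathrm{vech}(S_n)\mapsto\Lambda$. The non-Gaussianity enters only through the fourth moment of $S_n$, and that discrepancy is $O(\theta^{(n)})=o(1)$.

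For part (i), take $m_1=m_{0,1}$. The argument above shows that $\Lambda_1$ is asymptotically equivalent to the statistic under the true Gaussian model $m_{0,1}^{(n)}$, because the $o(n^{-1/2})$ displacement contributes $o(1)$ to the noncentrality. Hence $\Lambda_1\to\chi^2_\nu$ and retention has probability $1-\alpha+o(1)$. This is case (i) of Proposition \ref{propCSProperties}, and it holds irrespective of the rates $a_{0,1}^{(n)}\asymp a_{0,2}^{(n)}$.

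For part (ii), take $m_1=m_{0,2}$ and treat the mixture as the effective truth, which is $m_{0,1}$ up to negligible terms. Setting $\varepsilon^{c_2}=0$ in Proposition \ref{propMixDist}, as described in the text, gives $c_m=\min\{c,c_1\}$ with $c_1=c$. So the mixture is $c$-equivalent to $m_{0,2}$ at $\gamma^*$.

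Proposition \ref{propCSProperties}(ii) then applies directly, with the mixture in place of $m_0^{(n)}$, $m_{0,2}^{(n)}$ in place of $m_1^{(n)}$, and $a_0^{(n)}$ equal to $a_{0,1}^{(n)}+O(\theta^{(n)} a^{(n)})\asymp a_{0,1}^{(n)}$. Retention has probability $1-\alpha+o(1)$ exactly when $a_{0,1}^{(n)}\asymp a_{0,2}^{(n)}\lesssim n^{-1/2c}$, and probability $o(1)$ otherwise.

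To make the if-and-only-if concrete, I will write the noncentrality of $\Lambda_2$ as $n\,\inf_{\gamma\in m_{0,2}}\|\gamma_0^{(n)}-\gamma\|^2_{I}$, where $\|\cdot\|_I$ is the Fisher-information norm. By $c$-equivalence, this infimum is controlled by the Hausdorff distance between $m_{0,1}$ and $m_{0,2}$ within a ball of radius $\asymp a_{0,1}^{(n)}$. That is the mechanism behind Proposition \ref{propCSProperties}, and it carries over once the mixture perturbation is shown to be smaller order.

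The final assertion about Markov equivalence classes is immediate. The statistic $\Lambda$ depends on a model only through its set of covariances, and Markov-equivalent models induce the same set, so they yield identical $\Lambda$.

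I expect the main obstacle to be the first step: the data are not Gaussian, yet the Gaussian likelihood ratio must still be calibrated by $\chi^2_\nu$. Contiguity of the mixture to $F_{0,1}$, and in particular the uniformity of the fourth-moment convergence along the sequence, needs care.

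If Le Cam's lemma proves awkward here, I would fall back on a quasi-likelihood argument. Because $\theta^{(n)}\to0$, the sandwich covariance of the score equals the inverse Fisher information up to $o(1)$. This suffices for a $\chi^2$ limit with noncentrality determined by the displacement $\gamma_0^{(n)}-\gamma_{0,1}^{(n)}=o(n^{-1/2})$.
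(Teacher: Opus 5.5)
Your proof is correct and has the same skeleton as the paper's. In (i) the mixture parameter is written as $\gamma_{0,1}^{(n)}$ plus a $\theta^{(n)}$-scaled displacement and treated as a local alternative to $m_{0,1}$. In (ii) the mixture is shown to be $c$-equivalent to $m_{0,2}$ and Proposition~\ref{propCSProperties} is then applied.

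The difference is in what gets justified. For (i) the paper sets $\theta=n^{-1/2}$ and $h_0=h_{0,2}-h_{0,1}$ and says the argument of Lemma~\ref{lemmaCS} applies unmodified. That is, it runs the local likelihood-ratio theory of \citet{vdV2000} as if the data were Gaussian with parameter $\gamma_{0,1}+h_0/\sqrt{n}$, leaving implicit why the limit is central. You instead observe that the displacement $\theta^{(n)}(a_{0,2}^{(n)}-a_{0,1}^{(n)})$ is $o(n^{-1/2})$, so the noncentrality vanishes, and you address the non-Gaussianity of the mixture explicitly. The paper's version is shorter; yours actually covers the data-generating law in the statement.

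The obstacle you anticipate is milder than you fear. We have $\chi^2\bigl((1-\theta)F_{0,1}+\theta F_{0,2}\,\|\,F_{0,1}\bigr)=\theta^2\chi^2(F_{0,2}\,\|\,F_{0,1})$, and the last factor tends to zero because both covariances converge to $\Sigma_{\gamma^*}$. Hence the $n$-fold product laws have $\chi^2$-divergence at most $\exp\{n\theta^2\chi^2(F_{0,2}\,\|\,F_{0,1})\}-1$, which tends to zero since $n(\theta^{(n)})^2=O(1)$. The mixture sample is therefore asymptotically equivalent in total variation to an $F_{0,1}$ sample. No fourth-moment central limit theorem, uniformity argument, or third lemma is needed.

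The same observation makes your passage through Proposition~\ref{propMixDist} in (ii) redundant. It also bypasses the paper's use of Lemma~\ref{lemmaHausDistM01}, which appeals to a convexity that curved models such as $X\leftarrow Y\rightarrow Z$ lack. Part (ii) is then simply Proposition~\ref{propCSProperties} with $m_{0,1}$ as the true model and the hypothesised $c$-equivalence of $m_{0,1}$ and $m_{0,2}$.

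Finally, your explicit noncentrality paragraph delivers only the ``if'' half, because $c$-equivalence bounds $\inf_{\gamma\in m_{0,2}}\|\gamma_{0,1}^{(n)}-\gamma\|$ only from above. The ``otherwise $o(1)$'' half needs a matching lower bound. Exactly as in the paper, it rests entirely on Proposition~\ref{propCSProperties}.
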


Corollary \ref{corMix} states that the $c$-equivalence between the two true models only affects the retention probability for $m_{0,2}$. Figure \ref{figMix} explains the intuition for this: as $\theta^{(n)}\rightarrow 0$, the true mixture model approaches $m_{0,1}$, so $m_{0,1}$ is retained with asymptotically nominal probability regardless of signal strength. In this same limit, the $c$-equivalence between $m_{0,1}$ and $m_{0,2}$ coincides with the $c$-equivalence of $m_{0}$ and $m_{0,2}$.

\begin{figure}
	\begin{center}
		\includegraphics[trim=0in 0in 0in 0in, clip,width=0.45\linewidth]{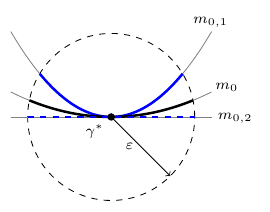}
		\includegraphics[trim=0in 0in 0in 0in, clip,width=0.45\linewidth]{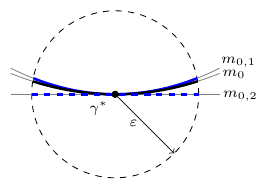}	
		\includegraphics[trim=0in 0in 0in 0in, clip,width=0.3\linewidth]{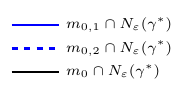} 
		\caption{The mixture model $m_0$ and its constituent models $m_{0,1}$ and $m_{0,2}$, for $\varepsilon= \theta = n^{-1/2}$. Left: small $n$. Right: larger $n$. The right sub-figure is magnified to make the models within the ball visible. 
			\label{figMix}}
	\end{center}
\end{figure}

Corollary \ref{corMix2} covers the case in which the postulated model $m_1$ is not represented by one of the two mixture components. This follows directly from Proposition \ref{propCSProperties} using Proposition \ref{propMixDist}. 

\begin{corollary}\label{corMix2}
	Consider the setting of Corollary \ref{corMix} extended for a postulated model $m_1^{(n)}$ with parameters $\gamma_1^{(n)} = \gamma^* + a_1^{(n)}$, not represented in the true mixture with components from $m_{0,1}^{(n)}$ and $m_{0,2}^{(n)}$. Let $c_1$ and $c_2$ be the $c$-equivalences of $m_1^{(n)}$ and $m_{0,1}^{(n)}$, and $m_1^{(n)}$ and $m_{0,2}^{(n)}$ respectively. Let $\mathcal{M}$ be the set of all models not rejected at level $\alpha$ by a likelihood ratio test of $m_1$ against the unconstrained model. As $n\rightarrow \infty$:
	\begin{enumerate}
		\item[(i)] All models $m_1^{(n)}\sim m_0^{(n)}$ are included in $\mathcal{M}$ with probability $1-\alpha$ irrespective of $a_{0,1}^{(n)} \asymp a_{0,2}^{(n)} \asymp a_1^{(n)}$.
		\item[(ii)]  If $c < \min\{c_1, c_2\}$, then $m_1^{(n)}$ is retained in $\mathcal{M}$ with probability $1-\alpha + o(1)$ if and only if $a_{0,1}^{(n)} \asymp a_{0,2}^{(n)} \asymp a_{1}^{(n)} \lesssim n^{-1/2c}$. Otherwise it is retained with probability $o(1)$. 
		\item[(iii)]  If $\max\{c_1,c_2\} < c$, then $m_1^{(n)}$ is retained in $\mathcal{M}$ with probability $1-\alpha + o(1)$ if and only if $a_{0,1}^{(n)} \asymp a_{0,2}^{(n)} \asymp a_{1}^{(n)} \lesssim n^{-1/2 \max\{c_1, c_2\}}$. Otherwise it is included with probability $o(1)$. 
	\end{enumerate}
	Moreover, if any model in a Markov equivalence class is retained in $\mathcal{M}$, then all models in the same class are retained.
\end{corollary}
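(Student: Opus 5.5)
The plan is to reduce Corollary~\ref{corMix2} to Proposition~\ref{propCSProperties}. The true model $m_0$ is taken to be the mixture $(1-\theta^{(n)})F_{0,1}+\theta^{(n)}F_{0,2}$, and its $c$-equivalence with the postulated $m_1^{(n)}$ is read off from Proposition~\ref{propMixDist}. The likelihood-ratio statistic in \eqref{eqLogLikRatio} depends on the data only through $S_n$. Since the mixture has finite fourth moments, $\sqrt n\{\text{vech}(S_n)-\gamma_0^{(n)}\}$ is asymptotically Gaussian with $\gamma_0^{(n)}=\text{vech}\{(1-\theta^{(n)})\Sigma_{0,1}+\theta^{(n)}\Sigma_{0,2}\}$. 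Non-Gaussianity of the mixture therefore affects only the asymptotic covariance, and this covariance tends to the Gaussian one as $\theta^{(n)}\to0$. The first step is to verify that, under $\theta^{(n)}=O(n^{-1/2})$, the mixture sequence is contiguous to the Gaussian sequence $F_{0,1}^{(n)}$ (a Le Cam first-lemma argument on the likelihood ratio $1-\theta+\theta\, dF_{0,2}/dF_{0,1}$). The local-asymptotic-normality arguments underlying Proposition~\ref{propCSProperties} then transfer, and the rescaled statistic has the same $\chi^2_\nu$ limit under the true model, up to a noncentrality governed by $\sqrt n$ times the distance from $\gamma_0^{(n)}$ to $m_1$.

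Part (i) follows as in Proposition~\ref{propCSProperties}(i), provided that ``$m_1\sim m_0$'' is interpreted through $\gamma$: a model whose constrained fit matches the mixture covariance induces the same profile likelihood in the $\Sigma^{-1}$ parametrisation, so $\Lambda$ is identical across the class. The closing Markov-equivalence statement follows for the same reason, since $\hat\gamma_j=\hat\gamma_k$ whenever $m_j\sim m_k$.

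For (ii) and (iii), I would compute $c_m=\max\{\min\{c,c_1\},\min\{c,c_2\}\}$ from Proposition~\ref{propMixDist}. If $c<\min\{c_1,c_2\}$, both inner minima equal $c$, so $c_m=c$. If $\max\{c_1,c_2\}<c$, the inner minima are $c_1$ and $c_2$, so $c_m=\max\{c_1,c_2\}$. Proposition~\ref{propCSProperties}(ii), applied with $c_m$ in place of $c$, then gives the threshold $n^{-1/2c_m}$. In the ``if'' direction the Hausdorff distance is $o(\varepsilon^{c_m})$ with $\varepsilon\asymp a^{(n)}\lesssim n^{-1/2c_m}$, so it is $o(n^{-1/2})$; the noncentrality vanishes and the model is retained with probability $1-\alpha+o(1)$. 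In the ``only if'' direction, the distance is of exact order $\varepsilon^{c_m}$ when $c_m$ is sharp, and it dominates $n^{-1/2}$; the noncentrality then diverges and retention has probability $o(1)$. Proposition~\ref{propKL} justifies treating the fitted $\hat\gamma_1$ as lying in the ball of radius $\max\{n^{-1/2},a^{(n)}\}$ about $\gamma^*$, so that the local Hausdorff comparison is the relevant one.

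The main obstacle is reconciling the two scales. Proposition~\ref{propMixDist} measures $c$-equivalence with $\varepsilon=\theta^{(n)}$, whereas Proposition~\ref{propCSProperties} uses the radius $a^{(n)}$ of departure from $\gamma^*$. I would first try to show that the mixture curve $\theta\mapsto(1-\theta)\gamma_{0,1}+\theta\gamma_{0,2}$ lies within the ball $N_{a}(\gamma^*)$, because both component parameters do and the ball is convex. This would let the Hausdorff bounds be restated with $\varepsilon$ replaced by the common rate $a_{0,1}\asymp a_{0,2}\asymp a_1$. The ``only if'' direction additionally requires $c_m$ to be the exact order rather than merely an upper bound, and I would impose this as a non-degeneracy condition inherited from the component pairs.
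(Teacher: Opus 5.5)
Your reduction is the paper's own: read $c_m$ off Proposition~\ref{propMixDist}, evaluate it as $c$ in case (ii) and $\max\{c_1,c_2\}$ in case (iii), and invoke Proposition~\ref{propCSProperties}. The paper passes through $(c_m+1)$-near-equivalence using smoothness of the Gaussian models, and does not address contiguity or the $\varepsilon=\theta^{(n)}$ versus $a^{(n)}$ mismatch. The gap is the ``only if'' half of (ii). You correctly note that it needs $c_m$ to be the exact order, but the non-degeneracy condition you propose cannot hold there. The triangle inequality for $d_H$ on the $\varepsilon$-balls (the one used to prove Proposition~\ref{propMixDist}) gives $d_H(m_{0,1}(\varepsilon),m_{0,2}(\varepsilon))\le d_H(m_{0,1}(\varepsilon),m_1(\varepsilon))+d_H(m_1(\varepsilon),m_{0,2}(\varepsilon))=o(\varepsilon^{\min\{c_1,c_2\}})$. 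So when $c<\min\{c_1,c_2\}$, the order $c$ is never exact for the component pair, and $c_m=c$ is only a lower bound for the mixture.

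Your own first step fixes the correct threshold. Take $\theta^{(n)}=O(n^{-1/2})$ and $a^{(n)}\to0$. Since distance to a set is $1$-Lipschitz,
\[
\sqrt n\,\bigl|\mathrm{dist}(\gamma_0^{(n)},m_1)-\mathrm{dist}(\gamma_{0,1}^{(n)},m_1)\bigr|\le\sqrt n\,\theta^{(n)}\|a_{0,2}^{(n)}-a_{0,1}^{(n)}\|=O(\|a^{(n)}\|)\to0 .
\]
The noncentrality under the mixture is therefore asymptotically the one under $F_{0,1}$ alone. By $c_1$-equivalence, that noncentrality is $\sqrt n\,o(\|a^{(n)}\|^{c_1})=o(1)$ whenever $a^{(n)}\lesssim n^{-1/2c_1}$, exactly as in the proof of Lemma~\ref{lemmaCS}(ii). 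Since $c_1>c$ in case (ii), for $n^{-1/2c}\ll a^{(n)}\lesssim n^{-1/2c_1}$ the model $m_1$ is retained with probability $1-\alpha+o(1)$, not $o(1)$. Hence (ii) is vacuous if $c$ is the exact order, and its ``otherwise'' clause is false if it is not. No assumption inherited from the component pairs repairs this.

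What your argument does establish is (i) and the ``if'' directions. It also gives (iii), provided exactness is imposed on the pair $(m_{0,1},m_1)$ along $\gamma_{0,1}^{(n)}$. There the same triangle inequality forces $c_1=c_2$, so $c_m=c_1$ and the reduction to $F_{0,1}$ yields both directions. The paper's proof applies Proposition~\ref{propCSProperties} with $c_m$ and never addresses exactness, so it shares this hole. To close yours, either restrict (ii) to its ``if'' direction or replace $c$ by $c_1$ there.
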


Figure \ref{figRatesMix} shows the simulated retention probabilities of a $95\%$ confidence set for $m_{0,1}$ and $m_{0,2}$ when they are are $2-$near-equivalent. This confirms the theoretical results of Corollary \ref{corMix}, $m_{0,1}$ being retained at probability close to $0.95$ regardless of signal strength, and $m_{0,2}$ to being distinguishable from $m_{0,1}$ when the signal strength decays at rate $n^{-1/4}$ or slower. For a postulated model $m_1$ that is $1-$near-equivalent to both $m_{0,1}$ and $m_{0,2}$, the simulated retention probabilities are close to nominal for signal strength decaying faster than $n^{-1/2}$, and otherwise there is power to detect the erroneous model and discard all elements of its equivalence class from the confidence set.

\begin{figure}[h!]
	\begin{center}
		\includegraphics[trim=0in 0in 0in 0in, clip,width=0.45\linewidth]{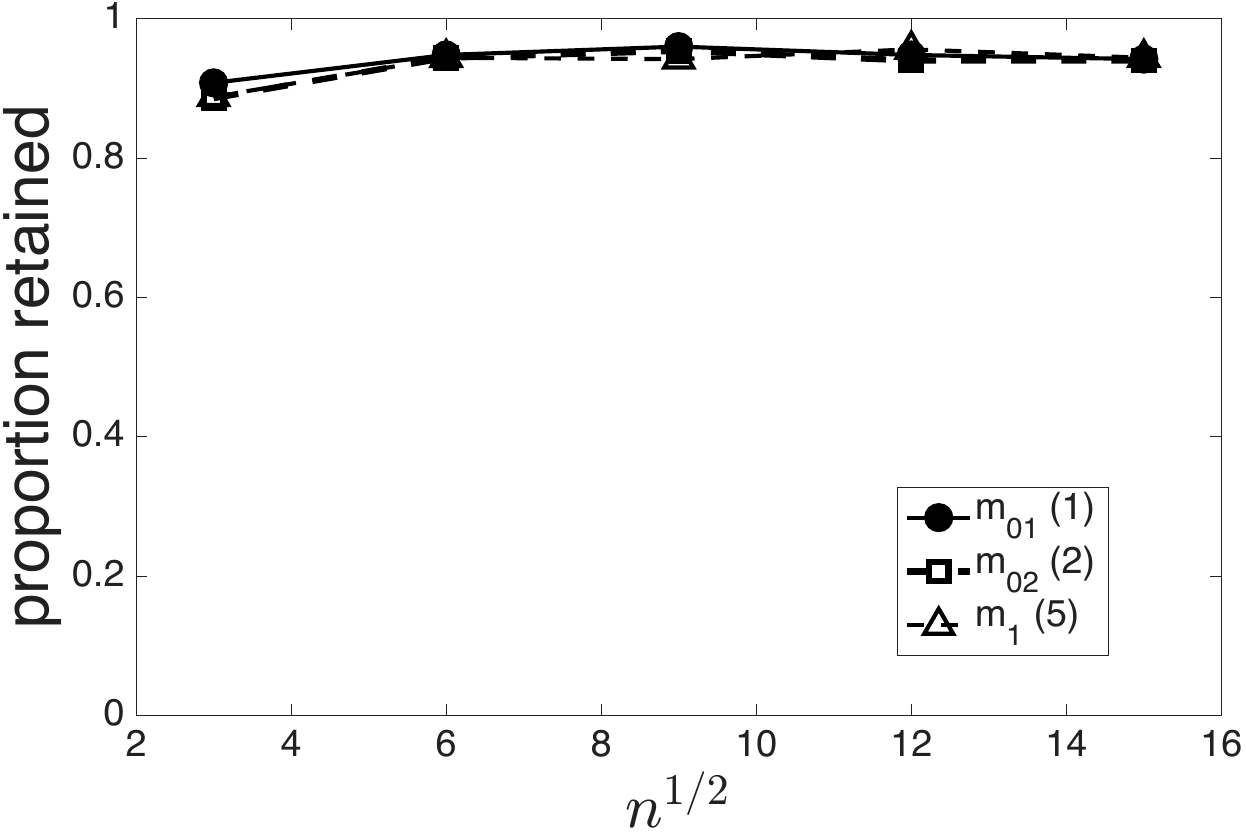}
		\includegraphics[trim=0in 0in 0in 0in, clip,width=0.45\linewidth]{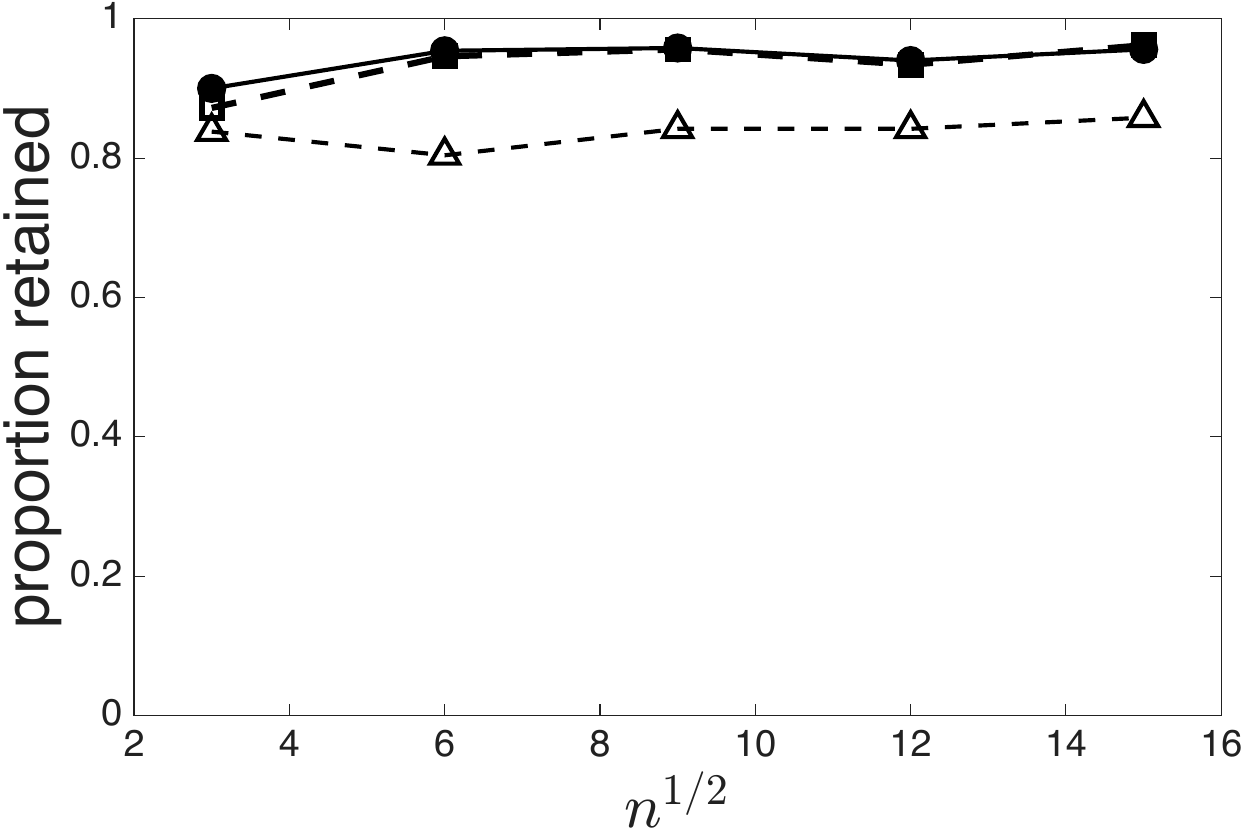} 
		
		\includegraphics[trim=0in 0in 0in 0in, clip,width=0.45\linewidth]{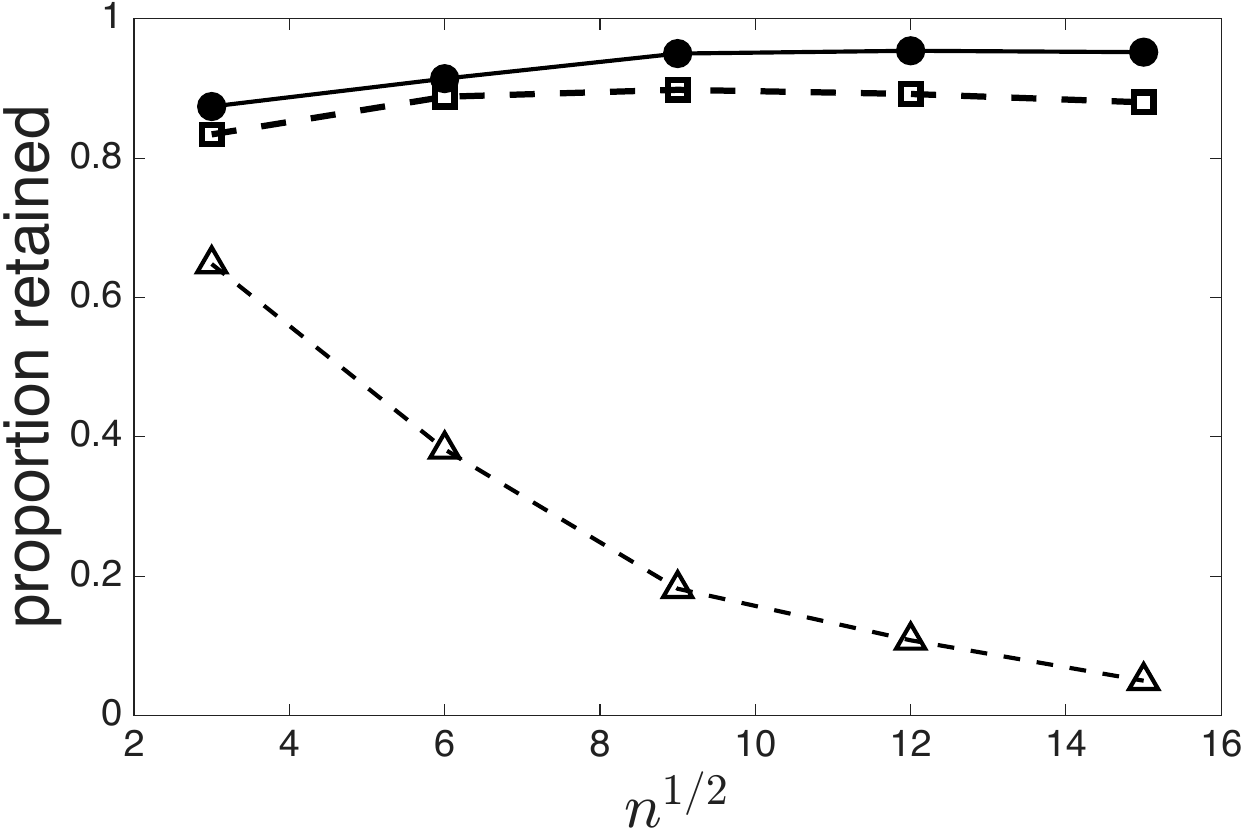} 		
		\includegraphics[trim=0in 0in 0in 0in, clip,width=0.45\linewidth]{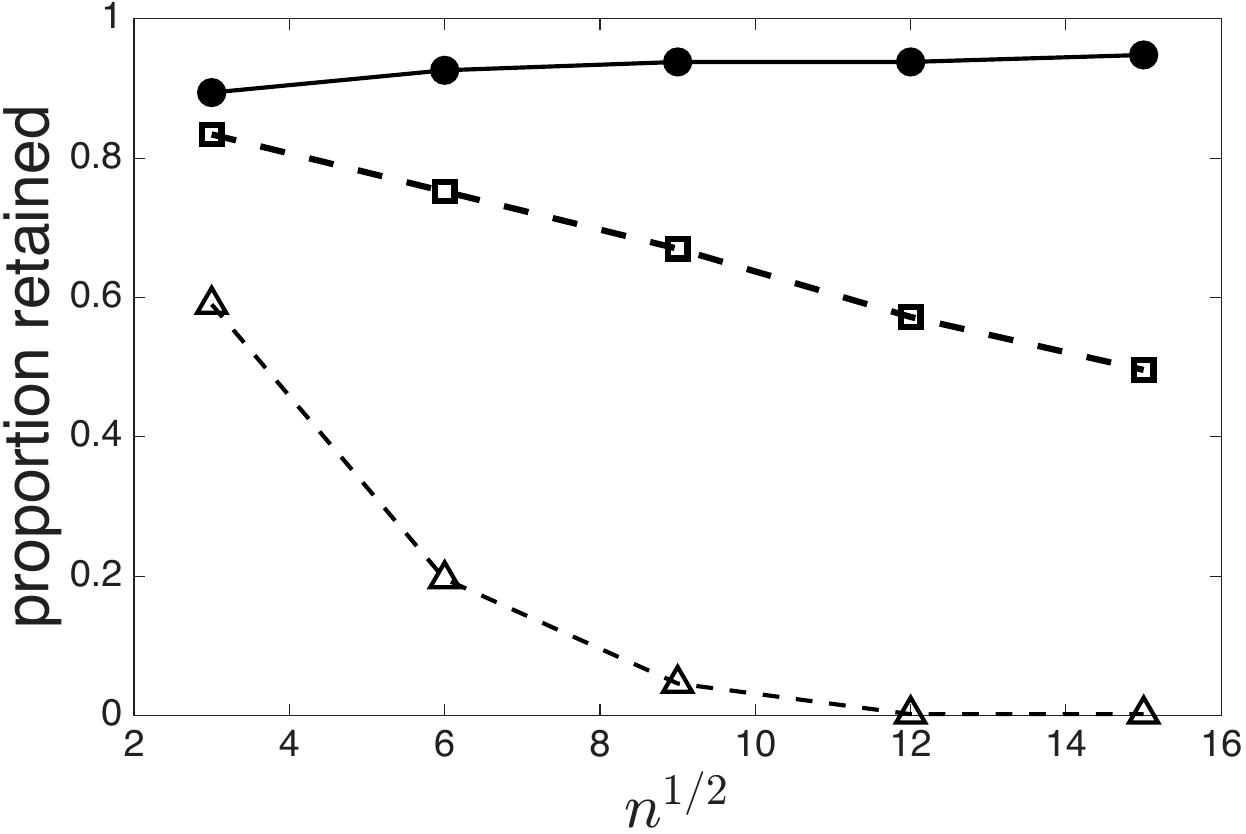} 
		\caption{Simulated retention probabilities for the two 
			true models (models 1 and 2 in Table \ref{tabVConfig}), and a 1-near-equivalent-model (model 5) as we increase sample size $n$ for varying signal strengths. Top left to bottom right: $a_0^{(n)} = n^{-1}, n^{-1/2}, n^{-1/4}, n^{-1/6}$. 
			\label{figRatesMix}}
	\end{center}
\end{figure}

The analogue of Proposition \ref{propKL} for the mixture setting requires relating the behaviour of the Kullback-Leibler-minimising parameter to the quantities governing how the parameter of the mixture approaches the point $\gamma^*$ of intersection. As before, we define $\mathbb{M}_n(\gamma)= \ell(\gamma)/n$ to be the rescaled log-likelihood function and $\mathbb{M}_0^{(n)}(\gamma)=\mathbb{E}\mathbb{M}_n(\gamma)$, depending on $n$ via the expectation operator under the sequence of true parameter values, which is now $\gamma_0^{(n)}= (1-\theta^{(n)})(\gamma^* + a_{0,1}^{(n)}) + \theta^{(n)}(\gamma^* + a_{0,2}^{(n)})$. Proposition \ref{propKLMix2} is a direct extension of Proposition \ref{propKL} and the proof is omitted.

\begin{proposition}\label{propKLMix2}
	Let $\hat{\gamma}_1^{(n)} :=  \arg\max_{\gamma \in \Gamma_1} \mathbb{M}_n(\gamma)$ be the maximum likelihood estimator assuming $m_1$ and let $\gamma_1^0 :=  \arg \max_{\gamma \in \Gamma_1} \mathbb{M}_0^{(n)}(\gamma)$, the Kullback-Leibler minimising projection. Under a sequence of true parameter values of the form $\gamma_0^{(n)}= (1-\theta^{(n)})(\gamma^* + a_{0,1}^{(n)}) + \theta^{(n)}(\gamma^* + a_{0,2}^{(n)})$, $\gamma_1^0 = \gamma^* + O(\max(a_{0,1}^{(n)}, \theta^{(n)}a_{0,2}^{(n)})$ and $\hat{\gamma}_1 = \gamma^* + O_p(\max\{n^{-1/2}, a_{0,1}^{(n)},\theta^{(n)}a_{0,2}^{(n)}\})$.
\end{proposition}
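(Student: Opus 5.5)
The plan is to reduce Proposition \ref{propKLMix2} to the argument behind Proposition \ref{propKL}. The reduction rests on one observation. The Gaussian working log-likelihood \eqref{eqLogLikSig} depends on the data only through $S_n$. Hence $\mathbb{M}_0^{(n)}(\gamma)=\vhalf\{\log\det\Sigma^{-1}-\tr(\EE S_n\,\Sigma^{-1})\}$ depends on the true distribution only through $\EE S_n=\Sigma_0^{(n)}$, even though the mixture is not Gaussian.

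\textbf{Step 1: reduce to an effective true covariance.} By linearity of the mixture covariance, $\gamma_0^{(n)}=\gamma^*+(1-\theta^{(n)})a_{0,1}^{(n)}+\theta^{(n)}a_{0,2}^{(n)}$. Therefore $\gamma_0^{(n)}-\gamma^*=O(\max\{a_{0,1}^{(n)},\theta^{(n)}a_{0,2}^{(n)}\})=:O(b_n)$. From the point of view of $\mathbb{M}_0^{(n)}$, the problem is identical to that of Proposition \ref{propKL} with $a_0^{(n)}$ replaced by the effective displacement $b_n$.

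\textbf{Step 2: locate the projection.} The Kullback--Leibler projection is $\gamma_1^0=\argmax_{\Gamma_1}\mathbb{M}_0^{(n)}$, a smooth function $g(\Sigma_0^{(n)})$ of the true covariance. Indeed, for a fixed causal ordering it is given explicitly by population least-squares regressions of each variable on its parents in $m_1$, all of which are smooth functions of $\Sigma$ near the positive definite $\Sigma^*$. Because $\gamma^*\in m_1$, we have $g(\gamma^*)=\gamma^*$: a model containing the truth projects to the truth. A first-order Taylor expansion, equivalently the mean value theorem, then gives $\gamma_1^0-\gamma^*=O(\|\gamma_0^{(n)}-\gamma^*\|)=O(b_n)$. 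If one prefers not to use the explicit regression formula, the implicit function theorem applies instead to the score equations of $\mathbb{M}_0^{(n)}$ restricted to $\Gamma_1$. The unconstrained $(A,\Delta)$ parametrisation makes $\Gamma_1$ an open set, and the Fisher information at $\gamma^*$ is nonsingular, so the argmax is locally unique and differentiable in $\Sigma_0$.

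\textbf{Step 3: handle the estimator.} The same map gives $\hat\gamma_1^{(n)}=g(S_n)$. The $Y_i$ are i.i.d.\ from the mixture, which has finite fourth moments uniformly in $n$ since both components are Gaussian with covariances converging to $\Sigma^*$. The central limit theorem, or simply Chebyshev's inequality, then gives $S_n-\Sigma_0^{(n)}=O_p(n^{-1/2})$. Smoothness of $g$ near $\Sigma^*$ yields $\hat\gamma_1^{(n)}-\gamma_1^0=O_p(n^{-1/2})$, and the triangle inequality with Step 2 gives the stated rate.

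\textbf{Main obstacle.} The main obstacle is justifying a uniform Lipschitz bound for $g$ on a neighbourhood of $\Sigma^*$ valid along the whole sequence, so that the constants in $O(\cdot)$ do not depend on $n$. I would handle this by working on a fixed compact neighbourhood of $\Sigma^*$ in the positive definite cone, on which $g$ is $C^1$. With probability tending to one, both $\Sigma_0^{(n)}$ and $S_n$ lie in this neighbourhood. The non-Gaussianity of the mixture causes no difficulty, because the working likelihood is used only through $S_n$ and its second-moment behaviour.
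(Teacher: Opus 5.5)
Your argument is correct, but it is not the route the paper has in mind. The paper omits the proof as a direct extension of that of Proposition \ref{propKL}, which is a generic M-estimation argument in three steps. First, consistency follows from uniform convergence of $\MM_n$ to $\MM_0^{(n)}$. Second, the peeling argument of van der Vaart and Wellner's Theorem 3.2.5 combines the curvature bound of Lemma \ref{lemmavdVW} with a Markov-inequality bound on $\WW_n(\gamma)-\WW_n(\gamma_1^0)=\vhalf\tr\{(\Sigma_\gamma^{-1}-\Sigma_{\gamma_1^0}^{-1})(\Sigma_{\gamma_0}-S_n)\}$, giving $\hat\gamma_1-\gamma_1^0=O_p(n^{-1/2})$. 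Third, $\gamma_1^0$ is located via $\MM_0^{(n)}(\gamma)-\MM_0(\gamma)=\vhalf\tr\{\Sigma_\gamma^{-1}(\Sigma_{\gamma^*}-\Sigma_{\gamma_0})\}$. For the mixture, only two things change. The difference $\Sigma_{\gamma_0}-\Sigma_{\gamma^*}$ becomes the matrix form of $(1-\theta^{(n)})a_{0,1}^{(n)}+\theta^{(n)}a_{0,2}^{(n)}$. And $\EE|S_n-\Sigma_{\gamma_0}|=O(n^{-1/2})$ must be checked for a non-Gaussian law with bounded fourth moments. These are exactly your Step 1 and your moment remark.

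You diverge by using the closed form of the Gaussian DAG estimator. Both $\gamma_1^0$ and $\hat\gamma_1^{(n)}$ are one fixed smooth map $g$, evaluated at $\Sigma_0^{(n)}$ and at $S_n$ respectively, with $g(\gamma^*)=\gamma^*$. The whole result then reduces to a Lipschitz bound for $g$ on a fixed ball about $\Sigma^*$, plus $S_n-\Sigma_0^{(n)}=O_p(n^{-1/2})$. This is shorter and needs no separate consistency step. Its constants are manifestly independent of $n$ because $g$ is; the peeling route tacitly needs the constants in Lemma \ref{lemmavdVW} to be uniform along the sequence $\MM_0^{(n)}$.

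Your route also makes the linear rate $\gamma_1^0-\gamma^*=O(b_n)$ transparent. The paper's ``from which the conclusion follows'' hides a step. A uniform $O(b_n)$ perturbation of the criterion plus quadratic curvature yields only $O(b_n^{1/2})$. One must also use that the perturbation's increments are $O(b_n\|\gamma-\gamma^*\|)$, which your Lipschitz argument delivers for free.

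In exchange, the paper's route does not rely on an explicit estimator. It would therefore carry over to models where the projection is only implicitly defined, such as the non-Gaussian extensions raised in the discussion.
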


In principle, similar results can be quantified for arbitrarily many mixture components, but since the complexity of the statements is increasing in the number of mixture components, we instead present qualitative insights for three mixture components in Figure \ref{figMix3}.

Suppose that the true distribution belongs to a three-component mixture $\theta_1 F_{0,1} + \theta_2 F_{0,2} + \theta_3 F_{0,3}$ where $(\theta_1,\theta_2,\theta_3)$ belongs to the unit simplex. In Figure \ref{figMix3}, models 1, 2 and 5 from Table \ref{tabVConfig} are the three causal models giving rise to component distributions $F_{0,1}, F_{0,2}$ and $F_{0,3}$. The signal strength for each constituent distribution is $0.25$, the level of the likelihood ratio test is $\alpha = 0.05$ and the sample size is $n = 100$, consistent with the setting of Figure \ref{figCoverage}. Figure \ref{figMix3} depicts, for a grid of values $\theta_1, \theta_2, \theta_3$ in the simplex, the simulated retention probabilities from 100 Monte Carlo replications for each of the models in the true mixture, and for an erroneous postulated model (model 12 from Table \ref{tabVConfig}).

\begin{figure}[h!]
	\centering
	
	\begin{minipage}{0.9\textwidth}
		\centering
		\begin{subfigure}{0.49\textwidth}
			\includegraphics[width=\linewidth, trim=5cm 0cm 5cm 0cm, clip]{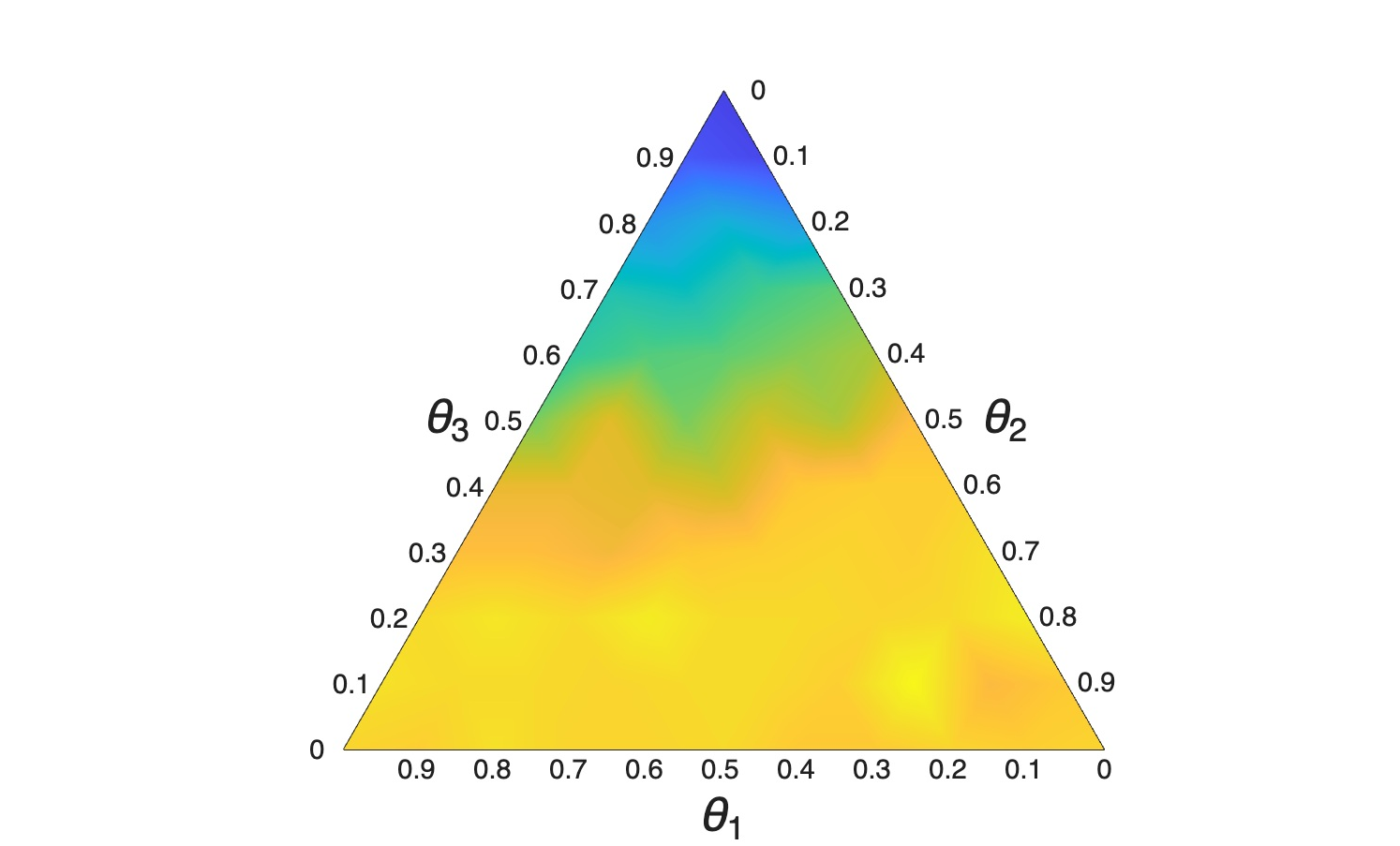}
		\end{subfigure}
		\begin{subfigure}{0.49\textwidth}
			\includegraphics[width=\linewidth, trim=5cm 0cm 5cm 0cm, clip]{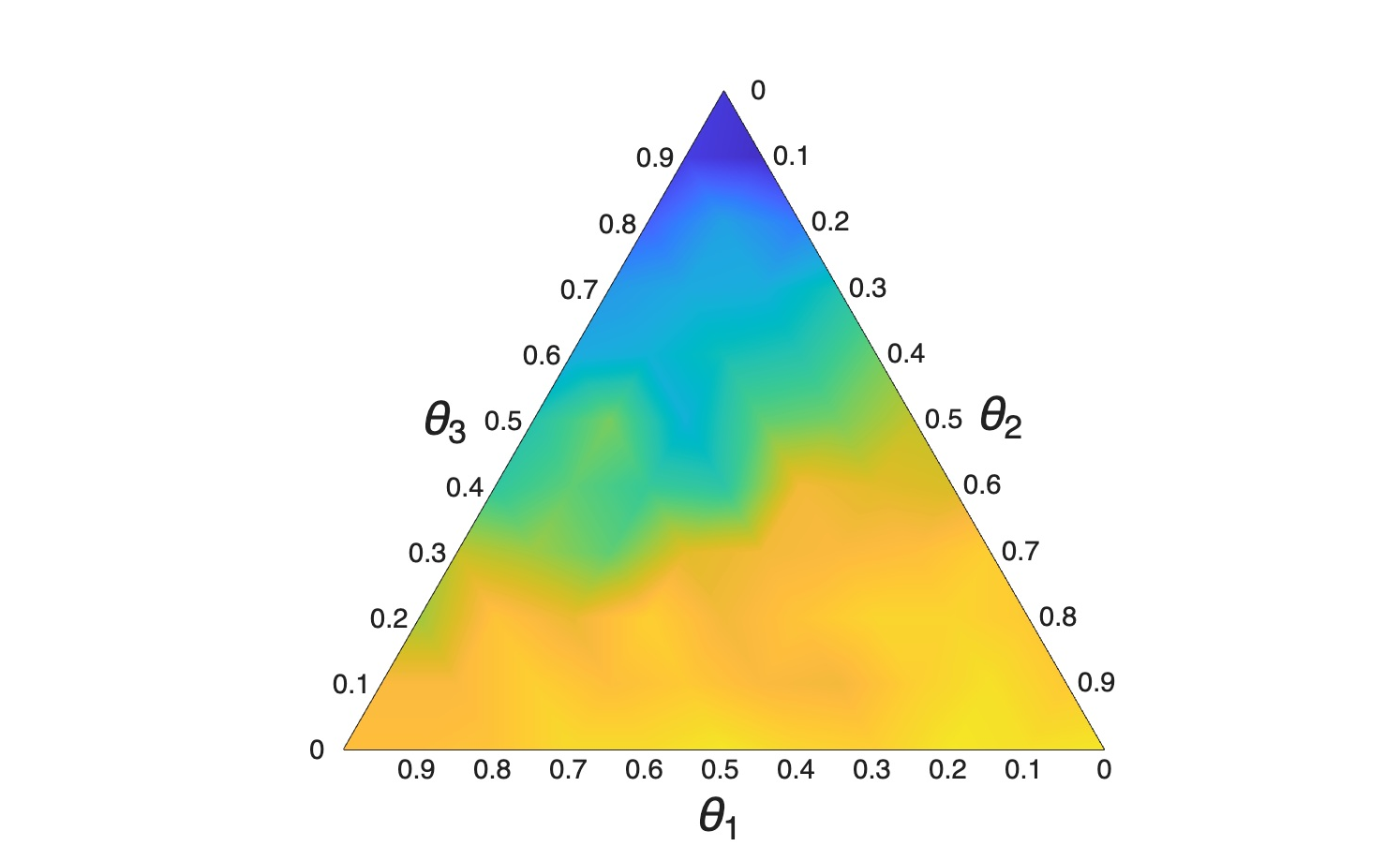}
		\end{subfigure}
		
		\begin{subfigure}{0.49\textwidth}
			\includegraphics[width=\linewidth, trim=5cm 0cm 5cm 0cm, clip]{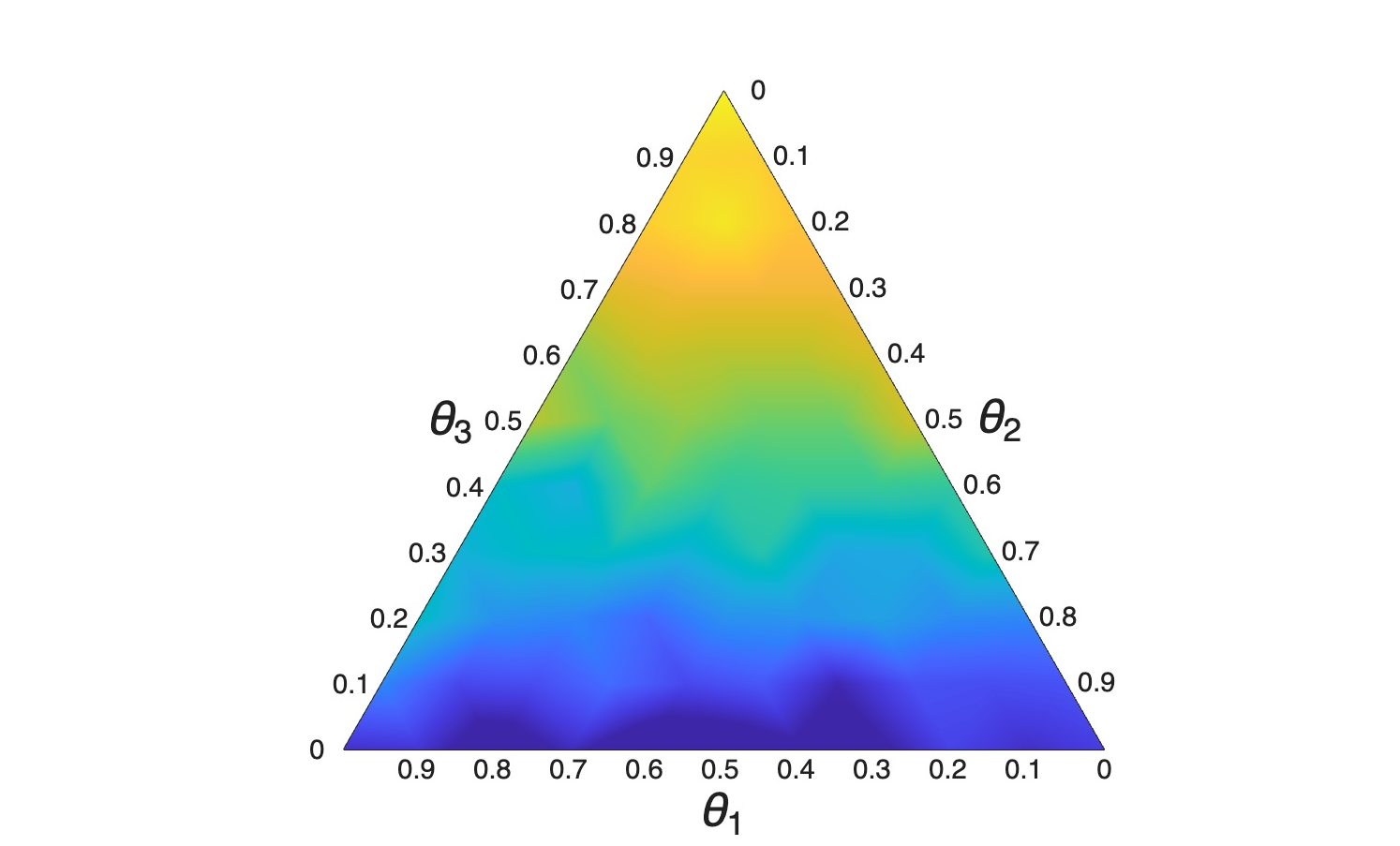}
		\end{subfigure}
		\begin{subfigure}{0.49\textwidth}
			\includegraphics[width=\linewidth, trim=5cm 0cm 5cm 0cm, clip]{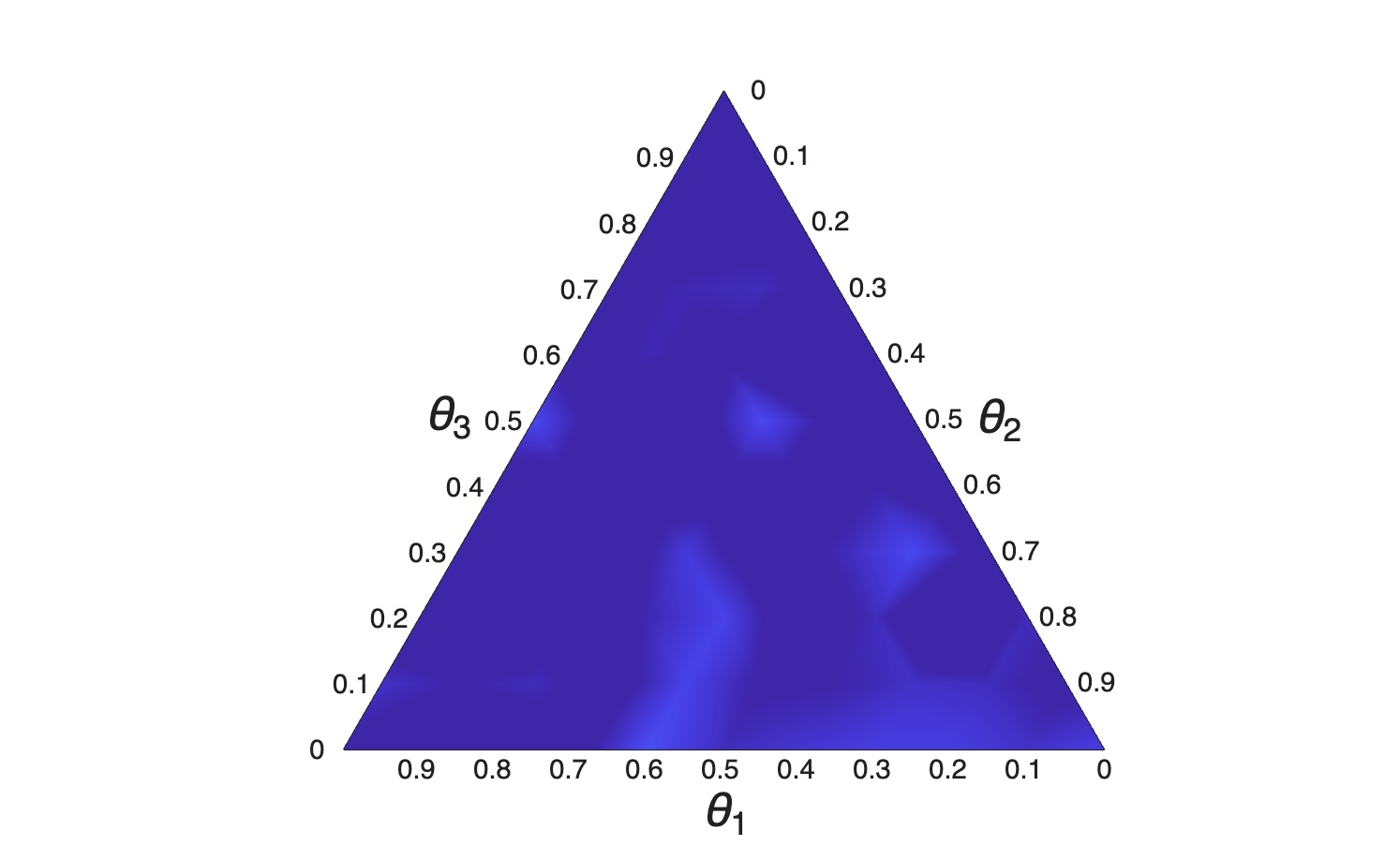}
		\end{subfigure}
	\end{minipage}
	\hfill
	\begin{minipage}{0.06\textwidth}
		\centering
		\vspace{0.5cm} 
		\includegraphics[width=\linewidth]{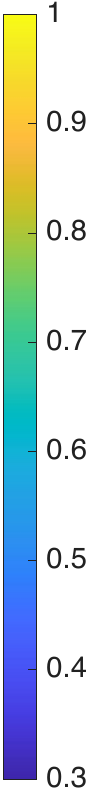}
	\end{minipage}
	\caption{Simulated retention probabilities for model 1 (top left), model 2 (top right), model 5 (bottom left) and model 12 (bottom right) when the true distribution is $\theta_1 F_{0,1} + \theta_2 F_{0,2} + \theta_3 F_{0,3}$ where $F_{0,1}$ belongs to model 1, $F_{0,2}$ to model 2, and $F_{0,3}$ to model 5. \label{figMix3}}
\end{figure}

The simulations confirm intuition: when $\theta_i = 1$, the true mixture distribution is $F_{0,i}$ and from Figure \ref{figCoverage} we expect the corresponding model to be retained with probability close to $1-\alpha$. The top left subfigure of Figure \ref{figMix3} shows the simulated retention probability for model 1, which is close to 0.95 near $\theta_1 = 1$. Analogous conclusions apply near $\theta_2 = 1$ (corresponding to model 2, top right) and $\theta_3 = 1$ (corresponding to model 5, bottom left). 

On account of the 2-near-equivalence of models 1 and 2 at the point of intersection, the two sub-figures on the top row are similar. A true mixture distribution close to a distribution in model 1 or 2, or a mixture of the two, is hard to distinguish from a distribution coming solely from the erroneous 2-near equivalent non-mixture component. The retention probabilities for both model 1 and model 2 are high provided that the contribution from model 5 is sufficiently small ($\theta_3$ below about 0.4). The contrast with the bottom-left sub-figure is due to the $1-$near equivalence of model 5 to the two other components. Only generating distributions that are close to pure mixtures in $F_{0,3}$ (from model 5) lead to high probabilities of retention for model 5, any contamination leading to a distribution that is incompatible with the data at the chosen significance level.

Consider in Figure \ref{figCoverage} the cases where the true model is 1, 2 or 5, which correspond to $\theta_1 = 1$, $\theta_2 = 1$ and $\theta_3 = 1$. These cases align with the vertices of each of the simplex plots. For example, when $\theta_1 = 1$, the true model is retained  with simulated probability close to 0.95, while the corresponding probabilities for models 2 and 5 are close to 0.9 and 0.35 respectively. The probabilities for model 12 (bottom right sub-figure) are close to zero.

\section{Concluding discussion}\label{secDiscuss}

The perspective here is that when scientific or sociological understanding is the goal, as is typically the case in causal-inferential settings, it is in some contexts appropriate to report all causal mechanisms that are compatible with the data. In principle, from a confidence set of causal models, one might seek to refute particular causal relations through a carefully-designed randomised experiment, if practical considerations allow for such.

In the simplest Gaussian settings we have presented a perturbative analysis in which the models contemplated for inclusion in the confidence set are on the borderline of detectability. \citet{Evans2020} showed that his $c$-equivalence classes generalise the notion of Markov equivalence to model pairs whose distinguishability requires the sample size to grow at a particular $c$-dependent rate. The present paper casts these $c$-equivalence classes in terms of confidence sets of models that are statistically indistinguishable from the true generating mechanism at a particular significance level for increasing sample size. We also studied in these terms the ability to identify mixture component models from the composite.

From several perspectives, we have sought to clarify and formalise the implications of \citeauthor{Evans2020}'s (\citeyear{Evans2020}) results, which broadly imply that a stronger signal strength, i.e.~one that decays more slowly with sample size, is needed to exclude models $m_1$ from the confidence set that are locally similar to $m_0$, the model to which the true distribution belongs. This degree of similarity is characterised by a larger $c$ in the $c$-equivalence of $m_1$ to $m_0$.

The formulations of this paper belong to formal theory and are intended to provide understanding rather than direct utility. In particular, the notional asymptotic regimes, as well as the hypothetical replication entailed in the inclusion probabilities for the confidence sets, do not arise in the application of statistical ideas to a single study. One could, however, envisage a use for these insights as a way of combining evidence about the causal mechanism from many different studies in which a confidence set of models was reported. Confidence sets of causal models thus provide new perspectives on, and a possible route to, \citeauthor{Goldthorpe2025}'s (\citeyear{Goldthorpe2025}) open problem of `triangulation' of evidence. How this could be done formally, in a way that would accommodate the different sample sizes, is unclear, but an informal analysis may be suggestive. This could entail, for instance, recording the proportion of times each competing model was found in a confidence set constructed from samples of roughly the same size, as a function of the sample size, and assessing the findings against the theoretical insights of this paper.

One obvious extension of the analysis in \S \ref{secMixtures} is to confidence sets for the mixture models themselves, rather than for the constituent models of the generating mixture model. This is more complicated due to estimation of the nuisance parameter $\theta$ and the many combinations of $c$-equivalences that can arise, both within and between the true and postulated mixture models.

A further extension is to other types of non-Gaussian models. In principle, the construction of \S \ref{secConstruction} can be extended by using the factorisation property of directed acyclic graphs to construct the likelihood ratio statistic for each causal model against a saturated model. In particular applications, successive parametric modelling assumptions for each postulated causal ordering may be convincingly justified, resulting in a regression by composition \citep{RBC}. In the absence of a specific application, however, tightly specified parametric models seem arbitrary, and nonparametric models lead to a less elegant exposition. Discrete and mixed graphs are outside of the scope of some of \citeauthor{Evans2020}'s (\citeyear{Evans2020}) results used in this paper.

\subsection*{Acknowledgements}

We are grateful to Robin Evans for early discussions. EPSRC fellowship EP/T01864X/1 (to HSB) is gratefully acknowledged.

\subsection*{Supplementary material}
The supplementary material contains proofs and additional simulation results.

\newpage

\begin{appendix}

\section{Proofs for Section \ref{secCalibration}}\label{secProofs}

\subsection{Preliminary lemmas}

\begin{lemma}\label{lemmaCS}
	Under the setting of Proposition \ref{propCSProperties}, 
	\begin{enumerate}
		\item[(i)] All models $m_1^{(n)} \sim m_0^{(n)}$ are indistinguishable from $m_0^{(n)}$ in a chi-square assessment irrespective of $a_0^{(n)} \asymp a_1^{(n)}$. 
		\item[(ii)] All models $m_1^{(n)}$ that are c-equivalent to  $m_0^{(n)}$ at $\gamma^*$ are indistinguishable from $m_0^{(n)}$ in a chi-square assessment if $a_0^{(n)} = O(n^{-1/2c})$. 
	\end{enumerate}
\end{lemma}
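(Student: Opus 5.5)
For part (i), the plan is to work entirely in the $\gamma=\text{vech}(\Sigma)$ parametrisation. If $m_1^{(n)}\sim m_0^{(n)}$, the two models describe the same set of Gaussian distributions and hence the same constraint set in $\Sigma^{-1}$. Consequently $\ell(\hat A_1,\hat\Delta_1;S_n)=\ell(\hat A_0,\hat\Delta_0;S_n)$ for every realisation of $S_n$, not merely asymptotically. So $\Lambda_1=\Lambda_0$ identically, both statistics have the same $\chi^2_\nu$ reference distribution, and neither the decision nor the inclusion probability depends on $a_0^{(n)}\asymp a_1^{(n)}$. This step needs only the observation already made after \eqref{eqPrecParam} that $\hat\gamma_j=\hat\gamma_k$ whenever $m_j\sim m_k$. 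The same identity gives the ``Moreover'' statement of Proposition \ref{propCSProperties}.

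For part (ii), I would use Le Cam's third lemma together with local asymptotic normality of the Gaussian family at $\gamma^*$ (see \citet[][ch.~7]{vdV2000}).

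\textbf{Step 1: localise.} Both $\gamma_0^{(n)}$ and the relevant points of $m_1$ lie in a ball $N_\varepsilon(\gamma^*)$ with $\varepsilon\asymp a_0^{(n)}\asymp a_1^{(n)}$.

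\textbf{Step 2: apply $c$-equivalence.} By Definition \ref{defCEquiv}, the point $\gamma_0^{(n)}\in m_0\cap N_\varepsilon(\gamma^*)$ has a point $\tilde\gamma_1^{(n)}\in m_1$ with $\|\gamma_0^{(n)}-\tilde\gamma_1^{(n)}\|=o(\varepsilon^c)$. Under $a_0^{(n)}=O(n^{-1/2c})$ we have $\varepsilon^c=O(n^{-1/2})$, so this separation is $o(n^{-1/2})$. Conversely, every point of $m_1$ in the ball is within $o(n^{-1/2})$ of $m_0$.

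\textbf{Step 3: compare the statistics.} Write $\Lambda_1$ via the usual quadratic expansion
\[
\Lambda_1=\inf_{h\in\sqrt n(\Gamma_1-\gamma_0^{(n)})}(Z_n-h)^\T I(Z_n-h)+o_p(1),
\]
where $Z_n=\sqrt n(\hat\gamma-\gamma_0^{(n)})$ is asymptotically $N(0,I^{-1})$ and $I$ is the Fisher information at $\gamma^*$. Continuity of $I$ along the sequence converging to $\gamma^*$ justifies using $I$ there. Step 2 shows that the rescaled sets $\sqrt n(\Gamma_1-\gamma_0^{(n)})$ and $\sqrt n(\Gamma_0-\gamma_0^{(n)})$, restricted to compact sets, have Hausdorff distance $o(1)$. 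Hence $\Lambda_1-\Lambda_0=o_p(1)$, and $\Lambda_1$ inherits the $\chi^2_\nu$ limit of $\Lambda_0$. Here $\nu$ should be the same for both models, since at the intersection point they share tangent cones.

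\textbf{Main obstacle.} The delicate step is turning Hausdorff closeness of the sets at scale $\varepsilon$ into closeness of the rescaled sets at scale $n^{-1/2}$ uniformly on compacta. The $o_p(1)$ remainder must hold even though $m_1$ may be curved relative to $m_0$ on the scale $\varepsilon\gg n^{-1/2}$. I would handle this in three moves. First, restrict the infimum to an $O_p(n^{-1/2})$ neighbourhood of $\gamma_0^{(n)}$, which is justified by consistency of $\hat\gamma_1$ at rate $\max\{n^{-1/2},a_0^{(n)}\}$ from Proposition \ref{propKL}. Second, apply Definition \ref{defCEquiv} on that smaller ball; the $o(\varepsilon^c)$ bound is monotone in the ball, so it carries over. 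Third, bound the change in the quadratic form by $\|Z_n-h\|$ times the $o(1)$ set displacement.

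Finally, contiguity from Le Cam's first lemma transfers these $o_p(1)$ statements from $\gamma^*$ to the sequence of true parameters.
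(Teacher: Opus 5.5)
Part (i) is the paper's argument: Markov equivalence gives $\hat\gamma_1=\hat\gamma_0$, so the statistics and their reference distributions coincide.

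\textbf{Comparison for (ii).} Here your packaging differs from the paper's. The paper applies \citet[][Theorem 16.7]{vdV2000} to get a noncentral $\chi^2$ limit for $\Lambda_1$ with noncentrality $\delta=\|I_{\gamma_1^0}^{1/2}h_0-I_{\gamma_1^0}^{1/2}H_1\|$. It uses $c$-equivalence only to show $h_1=\sqrt n(a_1^{(n)}-a_0^{(n)})=o(1)$, hence $\delta\to0$, with each model calibrated against its own degrees of freedom. You instead couple the two statistics, $\Lambda_1-\Lambda_0=o_p(1)$, through Hausdorff closeness of the rescaled local sets on compacta.

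This makes explicit the uniformity and localisation that the paper takes for granted when it applies a fixed-centre limit theorem at the moving centre $\gamma_0^{(n)}$. It does, however, need $c\ge1$. When $a_0^{(n)}=o(n^{-1/2})$ the enclosing ball has radius of order $n^{-1/2}$, and $o(\varepsilon^c)$ is $o(n^{-1/2})$ only for $c\ge1$. For models with distinct tangent spaces at $\gamma^*$ (e.g.\ different skeletons, which are $c$-equivalent for every $c<1$), the local sets converge to different subspaces and $\Lambda_1-\Lambda_0\not\to0$, even though (ii) still holds. The noncentrality formulation needs only $\sqrt n\,\mathrm{dist}(\gamma_0^{(n)},m_1)\to0$.

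\textbf{First flawed step: LAN at $\gamma^*$ and Le Cam's lemmas.} Contiguity of $P^n_{\gamma_0^{(n)}}$ to $P^n_{\gamma^*}$ requires $\sqrt n\|a_0^{(n)}\|=O(1)$. In the regime where (ii) says something new, $n^{-1/2}\ll a_0^{(n)}\lesssim n^{-1/2c}$ with $c>1$, one has $\sqrt n\|a_0^{(n)}\|\to\infty$. The two sequences of product measures are then asymptotically singular, so nothing transfers from $\gamma^*$. Your Step 3 is already centred at $\gamma_0^{(n)}$, so drop the transfer and work under $P_{\gamma_0^{(n)}}$ throughout. A triangular-array central limit theorem for $S_n$ (moments are uniformly bounded because $\gamma_0^{(n)}\to\gamma^*$) gives $Z_n\Rightarrow N(0,I_{\gamma^*}^{-1})$. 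The identity $2\{\ell(\hat\gamma)-\ell(\gamma)\}=2n\,\mathrm{KL}\{N(0,S_n)\,\|\,N(0,\Sigma_\gamma)\}$, with $\hat\gamma=\vech(S_n)$, supplies a quadratic expansion uniform over $O(n^{-1/2})$-balls about $\hat\gamma$. No contiguity is needed.

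\textbf{Second flawed step: localising $\hat\gamma_1$.} You rightly flag this as the crux, but it cannot be taken from Proposition \ref{propKL}. That rate, $\max\{n^{-1/2},a_0^{(n)}\}$, is measured from $\gamma^*$. When $a_0^{(n)}\gg n^{-1/2}$ it is far coarser than the $O_p(n^{-1/2})$-neighbourhood of $\gamma_0^{(n)}$ on which your quadratic expansion is valid.

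The localisation follows instead from your own Step 2, which supplies $\tilde\gamma_1\in m_1$ with $\sqrt n\|\tilde\gamma_1-\gamma_0^{(n)}\|\to0$. Hence $\Lambda_1\le2\{\ell(\hat\gamma)-\ell(\tilde\gamma_1)\}=O_p(1)$. With probability tending to one, the deviance above is bounded below by a positive multiple of $n$ outside any fixed neighbourhood of $\hat\gamma$, and grows like $n\|\gamma-\hat\gamma\|^2$ inside one. This forces $\hat\gamma_1-\gamma_0^{(n)}=O_p(n^{-1/2})$.

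Then apply Definition \ref{defCEquiv} on $N_\varepsilon(\gamma^*)$ with $\varepsilon\asymp\max\{\|a_0^{(n)}\|,n^{-1/2}\}$, which contains that neighbourhood, rather than on a ball centred at $\gamma_0^{(n)}$. With these repairs, and $c\ge1$, your argument goes through.
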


\begin{proof}
	We restate for ease of reference some results from \citet[][]{vdV2000}. Introduce parameter spaces $\Gamma_0$ and $\Gamma_1$ such that $\gamma_0 \in \Gamma_0$ corresponds to the true model $m_0$ and $\gamma_1 \in \Gamma_1$ corresponds to the permissible parameters under the postulated model $m_1$. By the statement of Proposition \ref{propCSProperties}, $\gamma_0$ belongs to the interior of its parameter space. Let $\gamma \in \Gamma$ be the parameter under the unconstrained model. In a notation mirroring that of \citet[][]{vdV2000}, introduce local parameter spaces $H_n = \sqrt{n}(\Gamma - \gamma_0)$ and $H_{n,0} = \sqrt{n}(\Gamma_0 - \gamma_0)$, and $H_{n,1} = \sqrt{n}(\Gamma_1 - \gamma_0)$. In this parameterisation the null hypothesis $\gamma = \gamma_0 + h_j/\sqrt{n} \in \Gamma_j$ corresponds to $h_j \in H_{n,j}$ for $j \in \{0,1\}$. This inclusion reflects a true null hypothesis if $j=0$ and an erroneous one if $j=1$. The unconstrained alternative for the likelihood-ratio assessment, $\gamma \in \Gamma$, corresponds to $h \in H_n$ in the local parametrisation. Suppose that the sets $H_{n,0}, H_{n,1}$ and $H_n$ converge to sets $H_0, H_1$ and $H$ respectively, then by \citet[][Theorem 16.7]{vdV2000}, the likelihood ratio statistic $\Lambda_n$ for testing 
	$\gamma \in \Gamma_0$, converges in distribution under $\gamma_0 + h_0/\sqrt{n}$ to $\Lambda$, where 
	\begin{align}\label{eqLambda}
	\begin{split}
	\Lambda &= \inf_{h_0\in H_0}(X-h_0)^\T I_{\gamma_0}(X-h_0) - \inf_{h_0\in H}(X-h_0)^\T I_{\gamma_0}(X-h_0) \\ 
	&=: \| I_{\gamma_0}^{1/2}X - I_{\gamma_0}^{1/2}H_0\|^2 - \|I_{\gamma_0}^{1/2}X - I_{\gamma_0}^{1/2}H\|^2, 
	\end{split}
	\end{align}
	for $X$ normally distributed with mean $h_0$ and covariance matrix $I_{\gamma_0}^{-1}$. The second line is a condensed notation for the first, due to \citet{vdV2000}. With $\gamma_0\in \text{int}(\Gamma_0)$, $H$ is, under this contiguous framework, the full space $\mathbb{R}^{d(d+1)/2}$, thus the second term on the right hand side of \eqref{eqLambda} is zero. Expressing $\Lambda$ as 
	\begin{align*}
	\Lambda &=  \| Z + I_{\gamma_0}^{1/2}h_0 - H_0'\|^2, 
	\end{align*}
	where $Z$ has a standard Normal distribution and $I_{\gamma_0}^{1/2}h_0 \in H_0' := I_{\gamma_0}^{1/2}H_0$, it follows by \citet[e.g.][Lemma 16.6]{vdV2000} that the limit distribution of $\Lambda_n$ under $\gamma_0 + h_0/\sqrt{n}$ is $\chi^2$ with $\nu$ degrees of freedom, where $\nu$ is the number of constraints in $H_0$ relative to $H$ .
	
	The likelihood ratio statistic $\Lambda_n$ for testing the postulated model $m_1$ under which $\gamma = \gamma_0 + h_1/\sqrt{n} \in \Gamma_1$ 
	converges in distribution under $\gamma_0 + h_0/\sqrt{n}$  to $\Lambda$, where 
	\begin{align*}
	\Lambda &= \inf_{h_1\in H_1}(X-h_1)^\T I_{\gamma_1^0}(X-h_1) - \inf_{h_1\in H}(X-h_1)^\T I_{\gamma_1^0}(X-h_1) \\ 
	&=: \| I_{\gamma_1^0}^{1/2}X - I_{\gamma_1^0}^{1/2}H_1\|^2 - \|I_{\gamma_1^0}^{1/2}X - I_{\gamma_1^0}^{1/2}H\|^2, \label{Lambda2}
	\end{align*}
	with $X$ as defined previously. It is clear in the condensed notation that the second term is that of \eqref{eqLambda}. Thus
	\begin{align}
	\Lambda = \|Z + I_{\gamma_1^0}^{1/2}h_0 - I_{\gamma_1^0}^{1/2}H_1\|^2
	\end{align}
	for a standard normal vector $Z$. We retain the notation $I_{\gamma_1^0}$ indicating that the information matrix is evaluated at the parameter under $m_1$, even though this is $I_{\gamma_0}$ in the relevant limit. The distribution of $\Lambda$ is non-central $\chi^2$ with non-centrality parameter $\delta = \|I_{\gamma_1^0}^{1/2}h_0 - I_{\gamma_1^0}^{1/2}H_1\|$, and degrees of freedom equal to the number of constraints in $H_1$ relative to $H$. As long as $m_0 \not\sim m_1$, so that $h_0 \notin H_1$, $\delta$ is positive. 
	
	Under the setting of Proposition \ref{propCSProperties}, the permissible parameters under both models approach the intersection $\gamma^*$: $\gamma_0^{(n)} = \gamma^* + a_0^{(n)}$ and $\gamma_1^{(n)} = \gamma^* + a_1^{(n)}$. Under Markov equivalence $m_1^{(n)} \sim m_0^{(n)}$, $\hat{\gamma}_1^{(n)} = \hat{\gamma}_0^{(n)}$ and $\gamma_1^{(n)} = \gamma_0^{(n)}$. Thus the log-likelihood functions under the two model constraints, and the distribution of the associated likelihood-ratio statistic are the same, and the models are indistinguishable. 
	
	The constraint $\gamma_0^{(n)} \in \Gamma_0$ corresponds to $h_0 \in H_{n,0}$ as before, and we recover the $\chi^2$ distribution with $\nu$ degrees of freedom under a true postulated model. For an erroneous postulated model, the relevant quantity $\gamma_1^{(n)} = \gamma_0^{(n)} + h_1/\sqrt{n} \in \Gamma_1$, is, in terms of $\gamma^*$,
	$\gamma_1^{(n)} = \gamma^* + a_0^{(n)} + h_1/\sqrt{n}$, so that  $h_1 = \sqrt{n}(a_1^{(n)} - a_0^{(n)})$, whose limit is $k$ in the notation of \citet[][]{Evans2020}. Then under the true $\gamma_0^{(n)}$, the non-centrality parameter $\delta = \|I_{\gamma_1^0}^{1/2}h_0 - I_{\gamma_1^0}^{1/2}H_1\| = \inf_{h_1} (h_1^T I_{\gamma_1^0} h_1)^{1/2}$. 
	
	Suppose that $m_1^{(n)}$ is $c$-equivalent to $m_0^{(n)}$, then, by definition, $a_1^{(n)} = a_0^{(n)} +  o(\|a_0^{(n)}\|^c)$ \citep[see][p.~3538]{Evans2020}. If $a_0^{(n)} \lesssim n^{-1/2c}$ then $a_1^{(n)} = o(n^{-1/2})$ and $h_1 = \sqrt{n}(a_1^{(n)} - a_0^{(n)}) = o(1)$ 
	so that $\delta \rightarrow 0$ and $\Lambda$ converges in distribution to a central $\chi^2$ distribution. Thus, $m_1^{(n)}$ is indistinguishable from $m_0^{(n)}$ in a $\chi^2$ assessment.

\end{proof}

For notational convenience here and henceforth, we will not always be explicit about the dependence on $n$ induced through $\gamma_0^{(n)}=\gamma^* + a_0^{(n)}$. We thus write $\gamma_0$ in place of $\gamma_0^{(n)}$ and $\gamma_1^0:=\argmax_{\gamma \in \Gamma_1} \MM_0^{(n)}$ where $\MM_0^{(n)}:=\EE_{\gamma_0}\MM_n(\gamma)$.

\begin{lemma}\label{lemmavdVW}
	There exists $C<\infty$ and $\delta>0$ such that, for all $\gamma$ satisfying $d(\gamma,\gamma_1^0)<\delta$,
	\[
	\MM_0^{(n)}(\gamma) -  \MM_0^{(n)}(\gamma_1^0) \leq -C d^2(\gamma,\gamma_1^0).
	\]
\end{lemma}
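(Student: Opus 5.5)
We prove the local quadratic bound by a second-order Taylor expansion of $\MM_0^{(n)}$ about its maximiser $\gamma_1^0$ over $\Gamma_1$. We work in the unconstrained $(A,\Delta)$ coordinates of model $m_1$, in which $\Gamma_1$ is an open subset of a Euclidean space, so $\gamma_1^0$ is an interior critical point. The Gaussian log-likelihood \eqref{eqLogLikA} gives
\[
\MM_0^{(n)}(A,\Delta)=\vhalf\{\log\det(A^\T e^{-\Delta}A)-\tr(\Sigma_0^{(n)}A^\T e^{-\Delta}A)\},
\]
where $\Sigma_0^{(n)}$ is the true covariance. This is smooth (indeed analytic) in the free entries of $(A,\Delta)$. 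Since $\gamma_1^0$ maximises over an open set, $\nabla\MM_0^{(n)}(\gamma_1^0)=0$. Taylor's theorem then gives
\[
\MM_0^{(n)}(\gamma)-\MM_0^{(n)}(\gamma_1^0)=\vhalf(\gamma-\gamma_1^0)^\T \nabla^2\MM_0^{(n)}(\tilde\gamma)(\gamma-\gamma_1^0)
\]
for some $\tilde\gamma$ on the segment between $\gamma$ and $\gamma_1^0$.

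The key step is to show that the Hessian at $\gamma_1^0$ is negative definite, with smallest eigenvalue bounded away from zero uniformly in $n$. In the $\Sigma^{-1}$ parametrisation the function $K\mapsto \vhalf\{\log\det K-\tr(\Sigma_0^{(n)}K)\}$ is strictly concave. Its Hessian is $-\vhalf (K\otimes K)^{-1}$ restricted to symmetric directions, which is uniformly negative definite on compact sets of positive definite $K$. Within a fixed causal ordering, the map $(A,\Delta)\mapsto A^\T e^{-\Delta}A$ is a smooth injective immersion: its Jacobian has full rank, because it is a diffeomorphism onto its image, with inverse given by the Cholesky-type decomposition. Chaining the two, and using $\nabla\MM_0^{(n)}=0$ at $\gamma_1^0$ to eliminate the second-derivative-of-the-map term, the Hessian in $(A,\Delta)$ at $\gamma_1^0$ equals $-J^\T \{\vhalf(K\otimes K)^{-1}\}J$, where $J$ is the Jacobian of the map and $K$ is the precision matrix at $\gamma_1^0$. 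This matrix is negative definite with eigenvalues at most $-2C_0<0$.

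Uniformity in $n$ comes from Proposition \ref{propKL}: $\gamma_1^0=\gamma^*+O(a_0^{(n)})\to\gamma^*$, and $\Sigma_0^{(n)}\to\Sigma^*$. Hence $\gamma_1^0$, $K$ and $J$ all lie in a fixed compact neighbourhood of the value at $\gamma^*$. On that neighbourhood, continuity of the third derivatives gives a uniform Lipschitz bound on the Hessian. Choose $\delta$ so that $\|\nabla^2\MM_0^{(n)}(\tilde\gamma)-\nabla^2\MM_0^{(n)}(\gamma_1^0)\|\le C_0$ whenever $d(\tilde\gamma,\gamma_1^0)<\delta$. Then the quadratic form is at most $-\vhalf C_0\, d^2(\gamma,\gamma_1^0)$, which gives the claim with $C=C_0/2$. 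If $d$ is measured in the $\gamma=\vech(\Sigma)$ coordinates rather than in $(A,\Delta)$, the smooth local bi-Lipschitz equivalence of the two parametrisations changes only the constant.

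The main obstacle is the uniformity of $C$ and $\delta$ in $n$, and the requirement that the limit point $\gamma^*$ lie in the interior of $\Gamma_1$, so that the Jacobian $J$ has full rank there. Near an intersection point the fitted $m_1$ parameters may have some entries of $A$ tending to zero, but this does not create a boundary, since those entries are unconstrained reals. The one thing to verify is that $e^{\Delta}$ stays bounded and bounded away from zero, which holds because $\Sigma^*$ is positive definite.
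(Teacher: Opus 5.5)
Your strategy is the paper's own. Its entire proof is the remark that a Taylor expansion about the maximiser $\gamma_1^0$, with a negative definite Hessian there, gives the bound, and you set out to supply the details it omits. But the step you call key is justified incorrectly. Write $g=f\circ\phi$ with $f(K)=\vhalf\{\log\det K-\tr(\Sigma_0^{(n)}K)\}$ and $\phi(A,\Delta)=A^\T e^{-\Delta}A$. The chain rule gives
\[
\nabla^2 g=J^\T\,\nabla^2 f\,J+\sum_k(\nabla_K f)_k\,\nabla^2\phi_k,\qquad \nabla_K f=\vhalf\bigl(\Sigma_{\gamma_1^0}-\Sigma_0^{(n)}\bigr).
\]
Stationarity of $g$ at $\gamma_1^0$ says only that $J^\T\nabla_K f=0$, i.e.\ that $\nabla_K f$ is normal to $m_1$. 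It does not make $\nabla_K f$ vanish. Indeed $\nabla_K f=0$ exactly when $\Sigma_0^{(n)}\in m_1$, which excludes the misspecified case $m_1\not\sim m_0$ that matters here. When $m_1$ is curved in precision coordinates, the second term generally survives.

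Take model 1 of Table~\ref{tabVConfig} ($X\rightarrow Y\leftarrow Z$), with $e_X,e_Z$ coordinate vectors. Then $\partial^2\phi/\partial a_{YX}\partial a_{YZ}=e^{-\Delta_Y}(e_Xe_Z^\T+e_Ze_X^\T)$, so the second term adds $-e^{-\Delta_Y}(\Sigma_0^{(n)})_{XZ}$. This is non-zero whenever the truth has $X$ and $Z$ marginally dependent. The true $(a_{YX},a_{YZ})$ block of the Hessian is $-e^{-\Delta_Y}$ times the full covariance matrix of $(X,Z)$ under $\Sigma_0^{(n)}$. Your formula $-J^\T\{\vhalf(K\otimes K)^{-1}\}J$ retains only its diagonal.

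The conclusion is nonetheless true and easily recovered, in either of two ways.

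\emph{Perturbatively.} $\nabla^2\phi$ is bounded on your compact neighbourhood, and $\Sigma_{\gamma_1^0}-\Sigma_0^{(n)}\rightarrow\Sigma^*-\Sigma^*=0$. So the extra term is $o(1)$, and your uniform eigenvalue bound holds for $n\geq n_0$, which is all the proof of Proposition~\ref{propKL} uses.

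\emph{Exactly.} Since $\det A=1$, $\MM_0^{(n)}$ equals, up to an additive constant, $\vhalf\sum_i\{-\Delta_i-e^{-\Delta_i}a_i^\T\Sigma_0^{(n)}a_i\}$, with $a_i^\T$ the $i$th row of $A$. It therefore separates over nodes. At the critical point each node is regressed on its parents in the population, with $e^{\Delta_i}=a_i^\T\Sigma_0^{(n)}a_i$. There the Hessian is block diagonal with blocks $-\vhalf$ and $-e^{-\Delta_i}(\Sigma_0^{(n)})_{\text{par}_i,\text{par}_i}$. It is hence negative definite for every $n$, uniformly since $\Sigma_0^{(n)}\rightarrow\Sigma^*\succ 0$. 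This computation is what actually backs the paper's one-line appeal to maximality, which on its own gives only semidefiniteness.

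The same explicit form exhibits $\gamma_1^0$ as a smooth function of $\Sigma_0^{(n)}$ equal to $\gamma^*$ at $\Sigma^*$, giving $\gamma_1^0\rightarrow\gamma^*$ directly. Use that instead of citing Proposition~\ref{propKL}, whose proof relies on the present lemma. The rest of your argument is fine: the Lipschitz control of the Hessian on a $\delta$-ball, and the bi-Lipschitz change to $\vech(\Sigma)$ coordinates.
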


\begin{proof}
	This is by Taylor expansion using the definition of $\gamma_1^0$ as a maximiser of $\MM_0^{(n)}$, implying negative definiteness of the Hessian.
\end{proof}

\begin{lemma}\label{lemmaGaussianLik}
	Under a mean-zero Gaussian model of dimension $d$,
	\begin{align*}
	\MM_0^{(n)}(\gamma) = &\;  -\vfrac{d}{2}\log(2\pi) - \vhalf\log (\det \Sigma_{\gamma}) - \vhalf \tr(\Sigma_\gamma^{-1}\Sigma_{\gamma_0}) \\
	\MM_0(\gamma) = &\;  -\vfrac{d}{2}\log(2\pi) - \vhalf\log (\det \Sigma_{\gamma}) - \vhalf \tr(\Sigma_\gamma^{-1}\Sigma_{\gamma^*})
	\end{align*}
	where $\MM_0(\gamma)=\lim_n 	\MM_0^{(n)}(\gamma)$ and $\Sigma_\gamma = \vech^{-1}(\gamma)$.
\end{lemma}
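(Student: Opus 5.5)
The plan is a direct computation of the expected rescaled log-likelihood of a mean-zero $d$-dimensional Gaussian, followed by passage to the limit $n\to\infty$.

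First, write the single-observation Gaussian log-density at parameter $\gamma$, with $\Sigma_\gamma=\vech^{-1}(\gamma)$:
\[
\log f(y;\gamma) = -\vfrac{d}{2}\log(2\pi) - \vhalf\log\det\Sigma_\gamma - \vhalf y^\T\Sigma_\gamma^{-1}y .
\]
By independence, $\MM_n(\gamma)=n^{-1}\sum_{i=1}^n \log f(Y_i;\gamma)$, and
\[
\MM_n(\gamma) = -\vfrac{d}{2}\log(2\pi) - \vhalf\log\det\Sigma_\gamma - \vhalf\tr(S_n\Sigma_\gamma^{-1}),
\]
since $n^{-1}\sum_i Y_i^\T\Sigma_\gamma^{-1}Y_i=\tr(S_n\Sigma_\gamma^{-1})$ by the cyclic property of the trace. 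This agrees with \eqref{eqLogLikSig} up to the additive constant and the factor $n$.

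Next, take the expectation under $\gamma_0=\gamma_0^{(n)}$. By linearity of the trace and of expectation,
\[
\EE_{\gamma_0}\tr(S_n\Sigma_\gamma^{-1})=\tr(\EE_{\gamma_0}S_n\,\Sigma_\gamma^{-1})=\tr(\Sigma_{\gamma_0}\Sigma_\gamma^{-1}).
\]
This uses $\EE_{\gamma_0}(YY^\T)=\Sigma_{\gamma_0}$ for mean-zero $Y$. Cyclicity again rewrites the result as $\tr(\Sigma_\gamma^{-1}\Sigma_{\gamma_0})$, which gives the first display. Note that only the second moment is used. The identity therefore holds whenever the data are mean-zero with covariance $\Sigma_{\gamma_0}$, so it also covers the mixture setting of \S\ref{secMixtures}, where the truth is not Gaussian but has covariance $\Sigma_0$.

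For the limit, use $\gamma_0^{(n)}=\gamma^*+a_0^{(n)}$ with $a_0^{(n)}\to0$. The map $\vech^{-1}$ is linear, so $\Sigma_{\gamma_0^{(n)}}\to\Sigma_{\gamma^*}$ entrywise. The function $\gamma_0\mapsto\tr(\Sigma_\gamma^{-1}\Sigma_{\gamma_0})$ is linear in $\Sigma_{\gamma_0}$ for fixed positive definite $\Sigma_\gamma$, hence continuous. The remaining terms do not depend on $n$. Therefore $\lim_n\MM_0^{(n)}(\gamma)$ equals the second display pointwise in $\gamma$. The convergence is in fact uniform on compact subsets of the positive definite cone, because $\|\Sigma_\gamma^{-1}\|$ is bounded there, and this uniformity is what later arguments would need.

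There is no real obstacle here. The only points needing care are to restrict $\gamma$ to the set where $\Sigma_\gamma$ is positive definite, so that $\log\det$ and the inverse are defined, and to note that the expectation is finite because the second moments exist.
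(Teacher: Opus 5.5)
Your proposal is correct and takes the same route as the paper, whose proof says only that the first display follows by direct calculation and the second by taking the limit; your write-up supplies those details ($\EE_{\gamma_0}S_n=\Sigma_{\gamma_0}$, and continuity of the trace term in $\Sigma_{\gamma_0}$ as $a_0^{(n)}\to 0$). Your remark that only second moments enter is a useful addition, since it justifies using the same formula in the non-Gaussian mixture setting of Section~\ref{secMixtures}.
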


\begin{proof}
	The form for $\MM_0^{(n)}(\gamma)$ can be checked by direct calculation. The form for $\MM_0(\gamma)$ follows on taking the appropriate limit.
\end{proof}

\subsection{Proofs of main results}

\begin{proof}[Proof of Proposition \ref{propCSProperties}]
	
	The construction of the confidence set uses the chi-square assessment $\Lambda_n \leq \chi^2_{\nu,1-\alpha}$, where $\chi^2_{\nu,1-\alpha}$ is the ($1-\alpha$)-quantile of the $\chi^2_\nu$ distribution. The true model $m_0^{(n)}$ is thus retained with asymptotic probability $1-\alpha$ by standard arguments, recalled in the proof of Lemma \ref{lemmaCS}. 
	
	Suppose that $m_1^{(n)} \sim m_0^{(n)}$, then by Lemma \ref{lemmaCS}, $m_1^{(n)}$ is indistinguishable from $m_0^{(n)}$ irrespective of $a_0^{(n)}$ in a chi-square assessment. Therefore, $m_1^{(n)}$ is also retained in the confidence set with probability $1-\alpha$. Suppose that $m_1^{(n)}$ is $c$-equivalent to $m_0^{(n)}$ at $\gamma^*$ and $a_0^{(n)} = O(n^{-1/2c})$. Then by Lemma \ref{lemmaCS}, $\pr(\Lambda_n \leq \chi^2_{\nu,1-\alpha}) = 1-\alpha + o(1)$ as $n\rightarrow \infty$. Otherwise, $m_1^{(n)}$ can be distinguished from $m_0^{(n)}$, and $\pr(\Lambda_n \leq \chi^2_{\nu,1-\alpha}) = o(1)$ as $n\rightarrow \infty$. 		
\end{proof}

\begin{proof}[Proof of Proposition \ref{propKL}]		
	The first part of the proof is based on that of \citet[][Theorem 3.2.5]{vdVW1996}. Rather than directly applying their theorem, which requires verifying a condition on the expected modulus of continuity of a particular empirical process, we found it easier to to use an explicit calculation for an object that appears earlier in their proof strategy. We therefore provide a version of the full argument here for ease of reference, with some clarifications. 
	
	Let $\gamma_1^0$ be as defined in Lemma \ref{lemmavdVW}. For $n,j \in \mathbb{N}$, define the sequence of sets
	\[
	S_{jn} := \left\{\gamma \in \Gamma_1: 2^{j-1} < r_{n}d(\gamma,\gamma_1^0) \leq 2^{j}\right\}, \quad j \in \mathbb{N},
	\]
	where $r_{n}$ is an divergent sequence of positive real numbers and $d(\gamma,\gamma_1^0)$ is the maximum elementwise distance between $\gamma$ and $\gamma_1^0$. Since we are treating the dimension of $\gamma$ as fixed, it is, however, immaterial which distance is used.

	For $j_0\in \NN$ and $\eta>0$ we have the decomposition into disjoint sets
	\begin{align*}
	\{\gamma: r_n d(\gamma, \gamma_1^0)>2^{j_0}\} \; \subseteq &\; \mathlarger{\cup}_{j\geq j_0}\{\gamma: \gamma \in S_{jn}\} \\
	= & \; \bigl\{\mathlarger{\cup}_{j\geq j_0: 2^j \leq \eta r_n}\{\gamma: \gamma \in S_{jn}\}\bigr\} \; \mathlarger{\cup} \; \bigl\{ \mathlarger{\cup}_{j\geq j_0: 2^j > \eta r_n}\{\gamma: \gamma \in S_{jn}\}\bigr\}. 
	\end{align*}
	Thus, since $2^j>\eta r_n$ implies the containments
	\[
	S_{jn} \subseteq \{\gamma: r_n d(\gamma,\gamma_1^0)>2^{j-1}\} \subseteq \{\gamma:  d(\gamma,\gamma_1^0)>\vhalf \eta \},
	\]
	we have
	\begin{align*}
	\pr(r_{n}d(\hat\gamma_1,\gamma_1^0) > 2^{j_0})\; \leq & \; \sum_{j\geq j_0: 2^j \leq \eta r_n}\hspace{-0.2cm}\pr(\hat{\gamma}_1 \in S_{jn}) + \pr(2\hspace{0.5pt}d(\hat\gamma_1,\gamma_1^0)> \eta) \\
	\leq & \; \sum_{j\geq j_0: 2^j \leq \eta r_n}\hspace{-0.2cm}\pr\Bigl(\sup_{\,\gamma\in S_{jn}}(\MM_n(\gamma) - \MM_{n}(\gamma_1^0))\geq 0\Bigr) + \pr(2\hspace{0.5pt}d(\hat\gamma_1,\gamma_1^0)> \eta).
	\end{align*}
	The second term tends to zero by consistency of $\hat\gamma_1$ to $\gamma_1^0$, which follows from e.g.~\citet[][Theorem 5.7]{vdV2000} using a standard identifiability argument and the fact that
	\[
	\sup_{\gamma\in \Gamma_1}|\MM_n(\gamma) - \MM_0^{(n)}(\gamma)|=\vhalf \sup_{\gamma\in \Gamma_1}|\tr\{\Sigma_\gamma^{-1}(\Sigma_{\gamma_0}-S_n)\}|\rightarrow_p 0
	\]
	by Lemma \ref{lemmaGaussianLik}.
	
	By Lemma \ref{lemmavdVW}, there exists a $C<\infty$, $\delta>0$ and $n_0\in \NN$ such that for all $n>n_0$ and for all $\gamma\in S_{jn}$ such that $\MM_n(\gamma) - \MM_{n}(\gamma_1^0))\geq 0$,
	\begin{align*}
	\mathbb{M}_{n}(\gamma) -  \mathbb{M}_{n}(\gamma_1^0) \; + \; &   \mathbb{M}^{(n)}(\gamma_1^0) -  \mathbb{M}_0^{(n)}(\gamma) \\
	\geq \; & \; C d^{2}(\gamma,\gamma_1^0) \\
	> \; & \; C(r_{n}^{-1}2^{j-1})^{2} \\
	= \; & \; C r_{n}^{-2}2^{2j-2}.
	\end{align*}
	It follows, on letting $\WW(\gamma)=\MM_n(\gamma)-\MM_0^{(n)}(\gamma)$, that
	\begin{equation}\label{eqProbM}
	\pr\Bigl(\sup_{\,\gamma\in S_{jn}}(\MM_n(\gamma) - \MM_{n}(\gamma_1^0))\geq 0\Bigr) \leq \pr\Bigl(\sup_{\,\gamma\in S_{jn}}(\WW_n(\gamma) - \WW_{n}(\gamma_1^0))>  C r_{n}^{-2}2^{2j-2}\Bigr).
	\end{equation}
	On reparametrising in terms of $\Sigma_\gamma=\vech^{-1}(\gamma)$,
	\begin{align*}
	\WW_n(\gamma) - \WW_{n}(\gamma_1^0) = \;& \; \vhalf \tr\{\Sigma_\gamma^{-1}(\Sigma_{\gamma_0}-S_n)\} - \vhalf \tr\{\Sigma_{\gamma_1^0}^{-1}(\Sigma_{\gamma_0}-S_n)\} \\
	= \;& \; \vhalf \tr\{(\Sigma_\gamma^{-1} - \Sigma_{\gamma_1^0}^{-1})(\Sigma_{\gamma_0}-S_n)\} \\ = \;& \; \vhalf \sum_{i,j}(\Sigma_\gamma^{-1} - \Sigma_{\gamma_1^0}^{-1})_{ij}(\Sigma_{\gamma_0}-S_n)_{ji}
	\end{align*}
	by Lemma \ref{lemmaGaussianLik}. Let $(\xi_{k}^\gamma, \lambda_k^\gamma)$ and $(\xi_{k}^{\gamma_1^0}, \lambda_k^{\gamma_1^0})$ denote the $k$th ordered eigenvector and eigenvalue pair for $\Sigma_\gamma$ and $\Sigma_{\gamma_1^0}$ respectively. By the complex variables argument presented by \citet[][p.608]{Battey2019},
	\[
	\sum_{i,j}(\Sigma_\gamma^{-1} - \Sigma_{\gamma_1^0}^{-1})_{ij}(\Sigma_{\gamma_0}-S_n)_{ji} = -\sum_{k,v,r,s}(\lambda_k^{\gamma}\lambda_{v}^{\gamma_1^0})^{-1} \xi_{rk}^\gamma \xi_{sv}^{\gamma_1^0}(\Sigma_{\gamma}-\Sigma_{\gamma_1^0})_{rs}\sum_{i,j}\xi_{ik}^{\gamma}\xi_{jv}^{\gamma_1^0}(\Sigma_{\gamma_0}-S_n)_{ji},
	\]
	and since 
	\[
	\frac{2^{j-1}}{r_n}<(\Sigma_{\gamma}-\Sigma_{\gamma_1^0})_{rs}<\frac{2^j}{r_n}, \quad \quad \gamma\in S_{jn},
	\]
	Markov's inequality applied to \eqref{eqProbM} gives
	\[
	\pr(r_n d(\hat \gamma_1, \gamma_1^0)>2^{j_0}) \leq C \hspace{-0.2cm} \sum_{j\geq j^{0}: 2^{j} \leq \eta r_n} \frac{r_n^2 n^{-1/2}}{r_n 2^{2j - 2}} = C r_n n^{-1/2}  \sum_{j\geq j^{0}} \frac{1}{2^{2j - 2}}.
	\]
	The sum on the right hand side is uniformly bounded in $r_n$, $j_0$ and $\eta$. It follows that the sharpest rate $r_n$ such that the right hand side is $O(1)$ is $r_n=\sqrt{n}$.
	
	The above calculation establishes that $\hat\gamma_1=\gamma_1^0+O_p(n^{1/2})$; to complete the proof of Proposition \ref{propKL} we need additionally to show that $\gamma_1^0=\gamma^* + O(a_0^{(n)})$, where the point of intersection satisfies
	\[
	\gamma^*=\argmax_{\gamma\in \Gamma_1}\MM_0(\gamma), \quad \MM_0(\gamma):=\lim_{n\rightarrow \infty} \MM_0^{(n)}(\gamma).
	\]
	With $A_0^{(n)}:=\vech(a_0^{(n)})$,
	\[
	\MM_0^{(n)}(\gamma)-\MM_0(\gamma)= \vhalf \tr\{\Sigma_\gamma^{-1}(\Sigma_{\gamma^*} - \Sigma_{\gamma_0})\} = \vhalf \tr\{\Sigma_\gamma^{-1}A_0^{(n)}\}
	\]
	by Lemma \ref{lemmaGaussianLik}, from which the conclusion follows.
	
\end{proof}

\section{Proofs for Section \ref{secMixtures}}\label{secProofsMix}

\begin{lemma}\label{lemmaHausDistM01}
	Suppose we have a two-component mixture model $m_0$ defined as $(1-\theta)F_{0,1} + \theta F_{0,2}$ whose constituent distributions belong to $m_{0,1}$ and $m_{0,2}$. Then, $d_H(m_0, m_{0,1}) = \theta d_H(m_{0,1}, m_{0,2})$ and $d_H(m_0, m_{0,2}) = (1-\theta)d_H(m_{0,1}, m_{0,2})$. An analogous statement applies when the models are intersected with $N_{\varepsilon}(\gamma^*)$, as in Definition \ref{defCEquiv}.
\end{lemma}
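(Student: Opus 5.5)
The approach is to work entirely in the $\gamma=\vech(\Sigma)$ coordinates, where the mixture acts linearly. By the discussion preceding Proposition \ref{propMixDist}, the mixture $(1-\theta)F_{0,1}+\theta F_{0,2}$ has covariance $(1-\theta)\Sigma_{0,1}+\theta\Sigma_{0,2}$. Hence its parameter is $\gamma_0=(1-\theta)\gamma_{0,1}+\theta\gamma_{0,2}$, and
\[
\gamma_0-\gamma_{0,1}=\theta(\gamma_{0,2}-\gamma_{0,1}),\qquad \gamma_{0,2}-\gamma_0=(1-\theta)(\gamma_{0,2}-\gamma_{0,1}).
\]
Both identities scale exactly, with no remainder terms. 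The whole proof consists of lifting these pointwise identities to the set-valued Hausdorff distance.

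\textbf{Step 1: read $m_0$ as a set.} I would take $m_0$ to be the image of a matching $T:m_{0,1}\to m_{0,2}$, namely $m_0=\{(1-\theta)g+\theta Tg: g\in m_{0,1}\}$. Here $T$ pairs each component parameter with the partner it is mixed with. I would choose $T$ to be a metric projection, so that $\|g-Tg\|=\inf_{g'\in m_{0,2}}\|g-g'\|$, and assume $T$ is onto $m_{0,2}$. This is the reading implicit in Figure \ref{figMix}, where $m_0$ lies between the two component models along the segments joining paired points.

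\textbf{Step 2: upper bound.} Take any $u=(1-\theta)g+\theta Tg\in m_0$. Then $\inf_{g'\in m_{0,1}}\|u-g'\|\le\|u-g\|=\theta\|Tg-g\|\le\theta\, d_H(m_{0,1},m_{0,2})$. Conversely, for $g\in m_{0,1}$ the point $u$ built from $g$ lies in $m_0$ and is at distance $\theta\|g-Tg\|$ from $g$. Both one-sided suprema are therefore at most $\theta\, d_H(m_{0,1},m_{0,2})$.

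\textbf{Step 3: lower bound, the main obstacle.} For equality I need a pair $(g,Tg)$, or a sequence of pairs, along which $\|g-Tg\|\to d_H(m_{0,1},m_{0,2})$. I then need to show that the nearest point of $m_{0,1}$ to $u=(1-\theta)g+\theta Tg$ is $g$ itself, so that the distance from $u$ to $m_{0,1}$ equals $\theta\|g-Tg\|$.

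The first requirement holds by the definition of $d_H$ together with the surjectivity of $T$. For the second, I would first exploit that $Tg-g$ is normal to $m_{0,2}$ at $Tg$, by the projection property. I would then try to show that $Tg-g$ is also normal to $m_{0,1}$ at $g$ at the maximising pair. The reason is that the maximiser of $g\mapsto \mathrm{dist}(g,m_{0,2})$ over $m_{0,1}$ has vanishing derivative along $m_{0,1}$, and that derivative is the tangential component of $g-Tg$. With the displacement normal to $m_{0,1}$ at $g$, $g$ is a critical point of $\|u-\cdot\|$ on $m_{0,1}$. Showing that it is the global minimiser is where care is needed. I would argue it by comparison: any $g'\in m_{0,1}$ closer to $u$ than $\theta\|g-Tg\|$ would give, by the triangle inequality through $u$, a point strictly closer to $Tg$ than $\|g-Tg\|$. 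That contradicts maximality of the projection distance once the curvature of $m_{0,1}$ within $N_\varepsilon(\gamma^*)$ is controlled. Smoothness of the Gaussian DAG models at $\gamma^*$ supplies this control.

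\textbf{Step 4: the second identity and the local version.} The claim for $m_{0,2}$ follows by the same argument with $\theta$ replaced by $1-\theta$ and the roles of the components swapped, using the inverse matching. For the localised statement, I would intersect every set with $N_\varepsilon(\gamma^*)$ and run the same steps. The one additional check is that convex combinations of paired points in the ball remain in the ball, which holds by convexity of $N_\varepsilon(\gamma^*)$.
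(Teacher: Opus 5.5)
Your upper bound in Step~2 is fine. The gap is in Step~3.

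\textbf{The Step 3 contradiction does not exist, and the claim behind it is false.} Your triangle inequality shows only this: a point $g'\in m_{0,1}$ with $\|g'-u\|<\theta\|g-Tg\|$ satisfies $\|g'-Tg\|<\|g-Tg\|$, i.e.\ $g$ is not the point of $m_{0,1}$ nearest to $Tg$. That contradicts nothing. Maximality of $g$ is a statement about $\rho_2(x):=\inf_{y\in m_{0,2}}\|x-y\|$ restricted to $m_{0,1}$, and $\rho_2(g')\le\|g'-Tg\|<\|g-Tg\|$ is perfectly consistent with it, because the nearest-point relation is not symmetric. The claim you are aiming for, $\inf_{y\in m_{0,1}}\|u-y\|=\theta\|g-Tg\|$, is false in general, and smoothness does not rescue it.
\begin{itemize}
\item In the localised setting the maximiser of $\rho_2$ on $m_{0,1}\cap N_\varepsilon(\gamma^*)$ typically lies on the sphere $\partial N_\varepsilon(\gamma^*)$, where your first-order normality argument is unavailable. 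Take $m_{0,1}$ a line through $\gamma^*$ and $m_{0,2}=\{\gamma^*\}$, so $T$ is trivially onto. The maximiser is an endpoint $g$ with $\|g-\gamma^*\|=\varepsilon$, and $u=(1-\theta)g+\theta\gamma^*$ lies in $m_{0,1}$ itself, at distance $0$ rather than $\theta\varepsilon$.
\item Even at an interior maximiser where normality holds, global minimality can fail. Let $m_{0,2}=[-L,L]\times\{0\}$ and let $m_{0,1}$ be the graph over $[-L,L]$ of a smooth bump of height $M$ supported in $(x_0-w,x_0+w)$, with $w<\theta M$. Then $g=(x_0,M)$, $Tg=(x_0,0)$ is the maximising pair and $g-Tg$ is normal to $m_{0,1}$ at $g$. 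Yet the flanks of the bump pass within $w$ of $u=(x_0,(1-\theta)M)$.
\end{itemize}

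\textbf{The repair.} Take the lower bound from the other half of the Hausdorff distance, using only that $\rho_2$ is $1$-Lipschitz. Let $D:=\sup_{h\in m_{0,1}}\|h-Th\|$; under your surjectivity assumption this equals $d_H(m_{0,1},m_{0,2})$.
\begin{itemize}
\item Every $v=(1-\theta)h+\theta Th\in m_0$ has $\rho_2(v)\le\|v-Th\|=(1-\theta)\|h-Th\|\le(1-\theta)D$.
\item Hence for each $g\in m_{0,1}$, $\inf_{v\in m_0}\|g-v\|\ge\rho_2(g)-(1-\theta)D$.
\item Taking the supremum over $g$ gives $d_H(m_0,m_{0,1})\ge\theta D$.
\end{itemize}
No curvature, first-order or attainment argument is needed. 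In both examples above the bound is attained at $g$ itself.

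\textbf{The second identity.} Here your triangle inequality is exactly right once it is aimed at $m_{0,2}$. With $u_g:=(1-\theta)g+\theta Tg$, the inequality $\|g-Tg\|=\rho_2(g)\le\theta\|g-Tg\|+\rho_2(u_g)$ gives $\rho_2(u_g)\ge(1-\theta)\|g-Tg\|$. The contradiction is now with the projection property of $T$ at $g$, not with maximality. No inverse matching is required, which is just as well: it need not exist, and it is not a metric projection.

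\textbf{Surjectivity in the localised version.} Surjectivity of $T$ is what gives $D=d_H(m_{0,1},m_{0,2})$ and the upper bound $(1-\theta)D$ for $d_H(m_0,m_{0,2})$. It generally fails after intersecting with $N_\varepsilon(\gamma^*)$: for the line and parabola of Figure~\ref{figCEquiv}, the projection of either piece onto the other misses points near the sphere. So the localised equalities need that assumption checked. Without it you still have
\[
d_H(m_0,m_{0,1})\le\theta\,d_H(m_{0,1},m_{0,2}),\qquad d_H(m_0,m_{0,2})\le(1+\theta)\,d_H(m_{0,1},m_{0,2}).
\]
These upper bounds are all that the proofs of Proposition~\ref{propMixDist} and Corollary~\ref{corMix} actually use.
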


\begin{proof} The Hausdorff distance between $m_0$ and $m_{0,1}$ is
	\begin{align*}
	d_H(m_0,m_{0,1}) = & \,  \max\biggl\{\sup_{\gamma_0\in m_0} \inf_{\gamma_{0,1} \in m_{0,1}}\|\gamma_0-\gamma_{0,1}\|_{2}, \sup_{\gamma_{0,1}\in m_{0,1}} \inf_{\gamma_0\in m_0}\|\gamma_0-\gamma_{0,1}\|_{2}\biggr\} \\
	& \hspace{1.7cm} \sup_{\gamma_{0,1}\in m_{0,1}} \inf_{\gamma_{0,1}\in m_{0,1} \gamma_{0,2}\in m_{0,2}} \|(1-\theta)\gamma_{0,1} + \theta \gamma_{0,2}-\gamma_{0,1}\|_{2}\biggr\} \\ 
	= &\,   \max\biggl\{\sup_{\gamma_{0,1}\in m_{0,1} \gamma_{0,2}\in m_{0,2}} \inf_{\gamma_{0,1} \in m_{0,1}}\|\theta( \gamma_{0,2}-\gamma_{0,1})\|_{2},  \\
	& \hspace{1.7cm} \sup_{\gamma_{0,1}\in m_{0,1}} \inf_{\gamma_{0,1}\in m_{0,1} \gamma_{0,2}\in m_{0,2}} \|\theta( \gamma_{0,2}-\gamma_{0,1})\|_{2}\biggr\} \\
	= &\, \sup_{\gamma_{0,2}\in m_{0,2}} \inf_{\gamma_{0,1} \in m_{0,1}}\|\theta( \gamma_{0,2}-\gamma_{0,1})\|_{2} \\
	= &\, \theta d_H(m_{0,2}, m_{0,1})
	\end{align*}
	where we have used that the second term in the maximum is zero since the two models intersect. The penultimate equality holds by convexity of the models. 
	
	The same argument with $m_{0,2}$ in place of $m_{0,1}$ gives the second statement.
\end{proof}

\begin{proof}[Proof of Proposition \ref{propMixDist}]
	For notational convenience, $m_j(\varepsilon)$ denotes $m_j \cap N_{\varepsilon}(\gamma^*)$. The triangle inequality and the definition of Hausdorff distance in Definition \ref{defCEquiv} gives 
	\begin{align*}
	d_H(m_0(\varepsilon), m_1(\varepsilon)) \leq &\, d_H(m_0(\varepsilon), m_{0,1}(\varepsilon)) + d_H(m_{0,1}(\varepsilon) ,m_1(\varepsilon))\\
	\leq &\, d_H(m_0(\varepsilon), m_{0,1}(\varepsilon)) + d_H(m_{0,1}(\varepsilon), m_{0,2}(\varepsilon)) + d_H(m_{0,2}(\varepsilon), m_1(\varepsilon))\\
	= &\, o(\max\{\varepsilon^{c}, \varepsilon^{c_2}\}),
	\end{align*}
	where we have used, from Lemma \ref{lemmaHausDistM01}, that $d_H(m_0(\varepsilon),m_{0,1}(\varepsilon)) = \theta d_H(m_{0,1}(\varepsilon), m_{0,2}(\varepsilon)) = O(\varepsilon)o(\varepsilon^c) = o(\varepsilon^{c+1})$. The last line follows on observing that $\varepsilon^{c+1} < \varepsilon^c$. By the same argument, 
	\begin{align*}
	d_H(m_0(\varepsilon), m_1(\varepsilon)) \leq &\, d_H(m_0(\varepsilon), m_{0,2}(\varepsilon)) + d_H(m_{0,2}(\varepsilon) ,m_1(\varepsilon))\\
	\leq &\, d_H(m_0(\varepsilon), m_{0,2}(\varepsilon)) + d_H(m_{0,2}(\varepsilon), m_{0,1}(\varepsilon)) + d_H(m_{0,1}(\varepsilon), m_1(\varepsilon))\\
	= &\, o(\max\{\varepsilon^{c}, \varepsilon^{c_1}\}).
	\end{align*}
	The result stated in the proposition is the minimum from these two statements.
\end{proof}

\begin{proof}[Proof of Corollary \ref{corMix}]
	The proof of (i) follows that of Lemma \ref{lemmaCS}. Introduce parameter spaces $\Gamma_0, \Gamma_{0,1}, \Gamma_{0,2}$ such that $\gamma_0 \in \Gamma_0$ corresponds to the true mixture model $m_0$ and $\gamma_{0,j} \in \Gamma_{0,j}$ ($j=1,2$) corresponds to the permissible parameters under the postulated model $m_1$, which in the case of part (i) is $m_{0,1}$. As in the proof of Lemma A.1, introduce local parameter spaces corresponding to $\Gamma_0, \Gamma_{0,1}, \Gamma_{0,2}, \Gamma$ and suppose that they converge to sets $H_0, H_{0,1}, H_{0,2}$ and $H$. The parameter associated with the true mixture is 
	\begin{align*}
	\gamma_0 = &\, 
	\gamma_{0,1} + \theta(\gamma_{0,2} - \gamma_{0,1})
	=  \gamma_{0,1} + \theta (h_{0,2} - h_{0,1}).
	\end{align*}
	Thus, on setting $\theta = n^{-1/2}$ and $h_0 = h_{0,2} - h_{0,1}$ the problem of interest has the same contiguous alternatives form $\gamma_{0,1} + h_0/\sqrt{n}$ as in the proof of Lemma \ref{lemmaCS}.
	
	The likelihood ratio statistic  $\Lambda_n$ for testing the postulated model $m_1 = m_{0,1}$ against the saturated one converges in distribution under $\gamma_{0,1} + h_0/\sqrt{n}$  to $\Lambda$, where 
	\begin{align*}
	\Lambda &= \inf_{h_{0,1}\in H_{0,1}}(X-h_{0,1})^\T I_{\gamma_0}(X-h_{0,1}) - \inf_{h_{0,1}\in H}(X-h_{0,1})^\T I_{\gamma_0}(X-h_{0,1}) \\ 
	&=: \| I_{\gamma_0}^{1/2}X - I_{\gamma_0}^{1/2}H_{0,1}\|^2 - \|I_{\gamma_0}^{1/2}X - I_{\gamma_0}^{1/2}H\|^2, \label{Lambda2}
	\end{align*}
	and the rest of the argument in the proof of Lemma \ref{lemmaCS} applies unmodified. 
	
	The proof of (ii) uses a different argument based on the $c$-equivalence. From Lemma \ref{lemmaHausDistM01}, $d_H(m_0(\varepsilon), m_{0,2}(\varepsilon)) = (1-\theta)d_H(m_{0,1}(\varepsilon),m_{0,2}(\varepsilon))$ and $m_{0,1}$ and $m_{0,2}$ are $c-$equivalent, so  $d_H(m_{0,1}(\varepsilon),m_{0,2}(\varepsilon)) = o(\varepsilon^c)$, implying that $d_H(m_0(\varepsilon), m_{0,2}(\varepsilon)) = o(\varepsilon^c)$. We have shown that that the true mixture model $m_0^{(n)}$ and the postulated $m_{0,2}^{(n)}$ are $c$-equivalent. An application of Proposition \ref{propCSProperties} establishes the result. 
\end{proof}

\begin{proof}[Proof of Corollary \ref{corMix2}]
	From Proposition \ref{propMixDist}, the true and postulated models are $c_m$-equivalent in a neighbourhood of $\gamma^*$, where
	\[
	c_m := \max\{\min\{ c, c_1\},\min\{ c, c_2\}\}.
	\]
	Because $m_{0,1}^{(n)}, m_{0,2}^{(n)}$ and $m_1^{(n)}$ are mean-zero Gaussian models with non-singular covariance matrices, they are infinitely differentiable. Since any the convex combination of $m_{0,1}^{(n)}$ and $m_{0,2}^{(n)}$ preserves this differentiability, the true mixture $m_0^{(n)}$ is also infinitely differentiable. Thus, $c_m$-equivalence implies $(c_m+1)$-near-equivalence as noted by \citet{Evans2020}. The notion of $c$-near-equivalence gives sharper results than $c$-equivalence.

	An application of Proposition \ref{propCSProperties} with $m_0^{(n)}$ and $m_1^{(n)}$, $(c_m+1)$-near-equivalent shows that $m_1^{(n)}$ is retained in $\mathcal{M}$ with probability $1-\alpha + o(1)$ if and only if $a_{0,1}^{(n)} \asymp a_{0,2}^{(n)} \asymp a_1^{(n)} = O(n^{1/2c_m})$, where $c_m = c$ if $c \leq \max\{c_1, c_2\}$ and $c_m = \max\{c_1, c_2\}$ if $\max\{c_1, c_2\} \leq c$. 
\end{proof}

\section{Additional plots}

To illustrate Proposition \ref{propCSProperties} from a different perspective, we present the simulated retention probabilities of a 95\% confidence set, from 500 replications, for the 12 models in Table \ref{tabVConfig} when the true distribution belongs to model 1 with varying signal strength. Models 2--4 are 2-near-equivalent to model 1 as they share the same skeleton, and the remaining models are 1-near-equivalent. 

From Proposition \ref{propCSProperties}, 1-near-equivalent models are indistinguishable from the true one if and only if the signal strength is $o(n^{-1/2})$, whereas 2-near-equivalent models are indistinguishable from the true one if and only if the signal strength is $o(n^{-1/4})$. These conclusions are reflected in the plots. That 1-near-equivalent and 2-near-equivalent models are indistinguishable at any sample size if the signal decays faster than $n^{-1/2}$ is apparent from Figure \ref{fig:scaling n-1}. When the signal strength is at the borderline of detectability, $n^{-1/2}$, as in Figure \ref{fig:scaling n-1/2}, the 1-near-equivalent models are retained less frequently than the nominal rate, but as the sample size is increased, the corresponding retention probabilities do not approach zero; the 2-near-equivalent models are retained with simulated probabilities close to $1-\alpha$. With a signal strength of $n^{-1/4}$ in Figure \ref{fig:scaling n-1/4}, the retention probabilities for the 1-near-equivalent models approach zero with increasing sample size, while those for the 2-near-equivalent models are stable but below the nominal coverage rate $1-\alpha$. At a slowly decaying signal strength of $n^{-1/6}$, both 1-near-equivalent and 2-near-equivalent models are distinguishable from the true one. The figures show that a stronger signal strength, i.e.~one that decays more slowly with sample size, is needed to exclude models from the confidence set that are locally similar to the true one, as characterised by a larger $c$ in their $c-$equivalence to model $m_0$.

\begin{figure}
	\vspace{-0.2cm}
	\centering
	\begin{subfigure}[b]{0.49\textwidth}
		\centering
		\includegraphics[trim=0.0in 0.2in 0.0in 1in, clip, height=0.2\paperwidth]{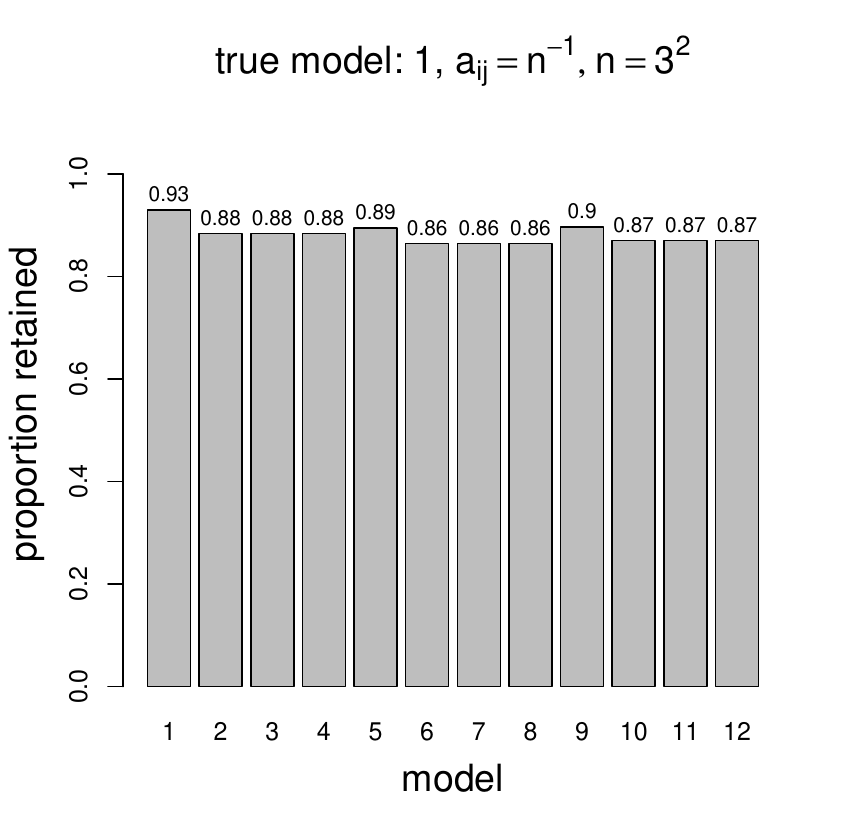}
	\end{subfigure}
	\hspace{-0.8cm}
	\begin{subfigure}[b]{0.49\textwidth}
		\centering
		\includegraphics[trim=0.0in 0.2in 0.0in 1in, clip, height=0.2\paperwidth]{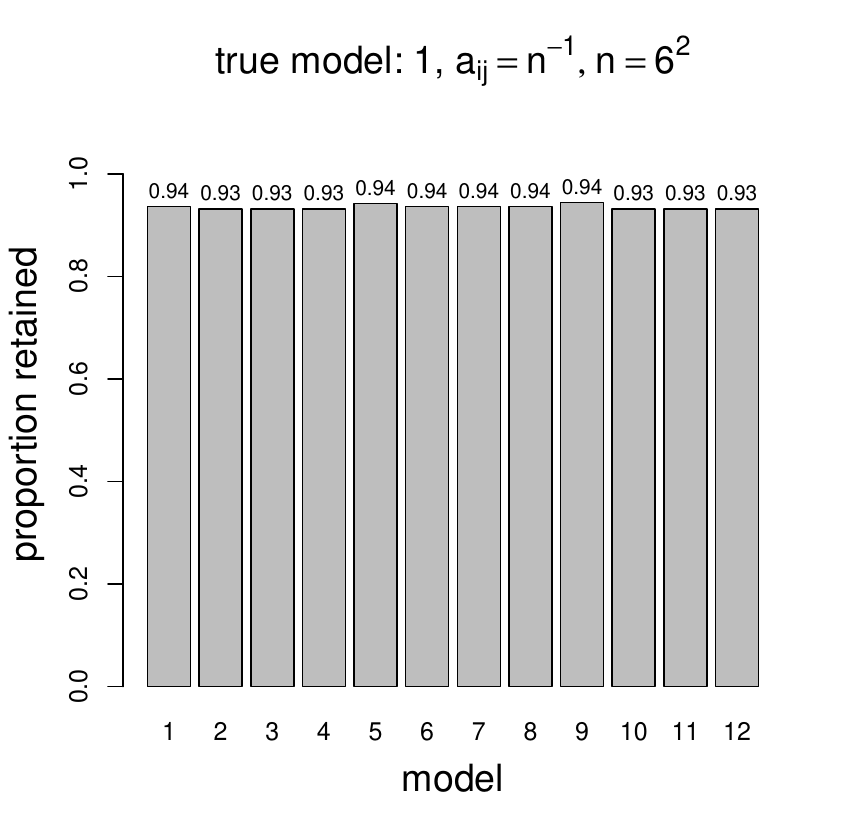}
	\end{subfigure} 
	\\[-0ex]
	\vspace{0.2cm}
	\par
	\centering
	\begin{subfigure}[b]{0.49\textwidth}
		\centering
		\includegraphics[trim=0.0in 0.2in 0.0in 1in, clip, height=0.2\paperwidth]{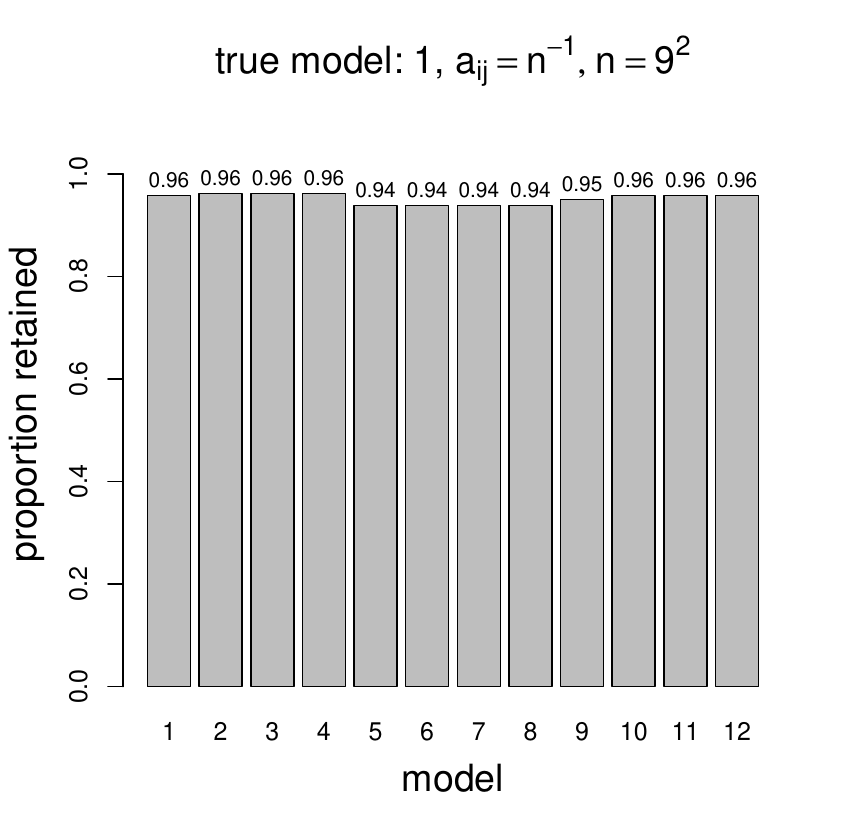}
	\end{subfigure}
	\hspace{-0.8cm}
	\begin{subfigure}[b]{0.49\textwidth}
		\centering
		\includegraphics[trim=0.0in 0.2in 0.0in 1in, clip, height=0.2\paperwidth]{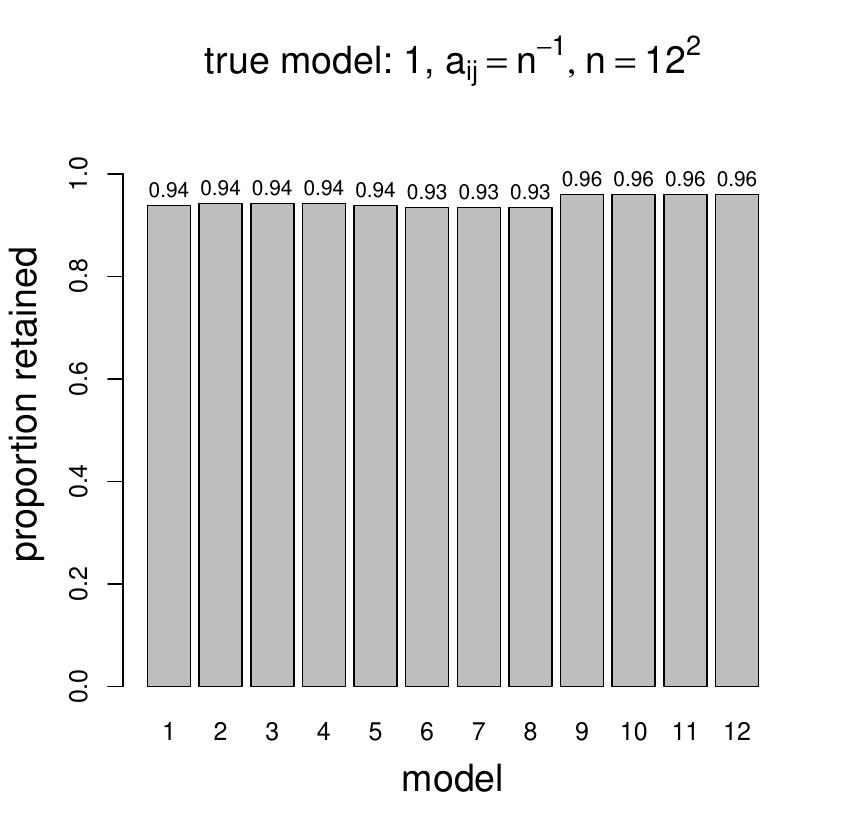}
	\end{subfigure} 
	\caption{Simulated retention probabilities for each model with increasing $n$ when the true distribution belongs to model 1 with signal strengths $a_{ij}=n^{-1}$. Top left to bottom right illustrates effect of increasing $n\in\{3^2,6^2,9^2,12^2\}$. \label{fig:scaling n-1} }
\end{figure}

\begin{figure}[h!]
	\vspace{-0.2cm}
	\centering
	\begin{subfigure}[b]{0.49\textwidth}
		\centering
		\includegraphics[trim=0.0in 0.2in 0.0in 1in, clip, height=0.2\paperwidth]{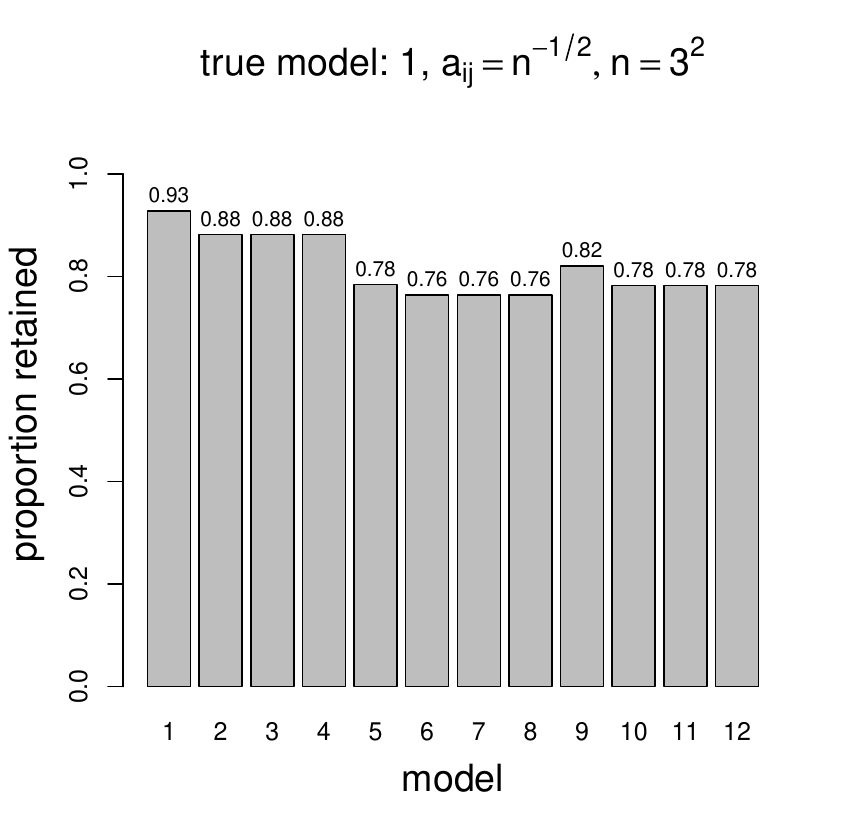}
	\end{subfigure}
	\hspace{-0.8cm}
	\begin{subfigure}[b]{0.49\textwidth}
		\centering
		\includegraphics[trim=0.0in 0.2in 0.0in 1in, clip, height=0.2\paperwidth]{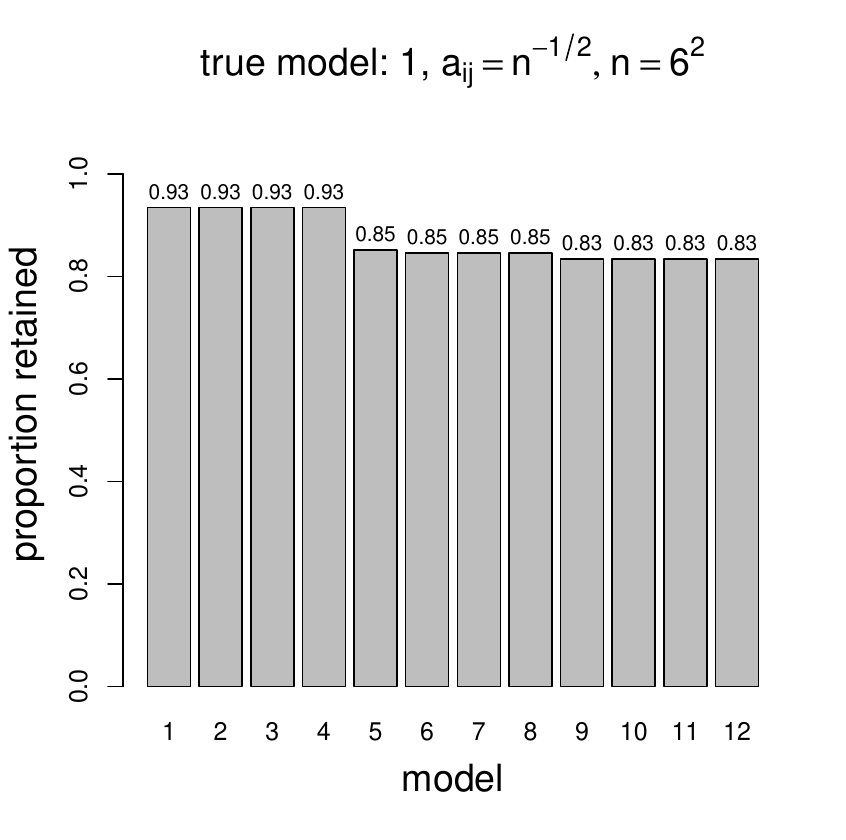}
	\end{subfigure} 
	\\[-0ex]
	\vspace{0.2cm}
	\par
	\centering
	\begin{subfigure}[b]{0.49\textwidth}
		\centering
		\includegraphics[trim=0.0in 0.2in 0.0in 1in, clip, height=0.2\paperwidth]{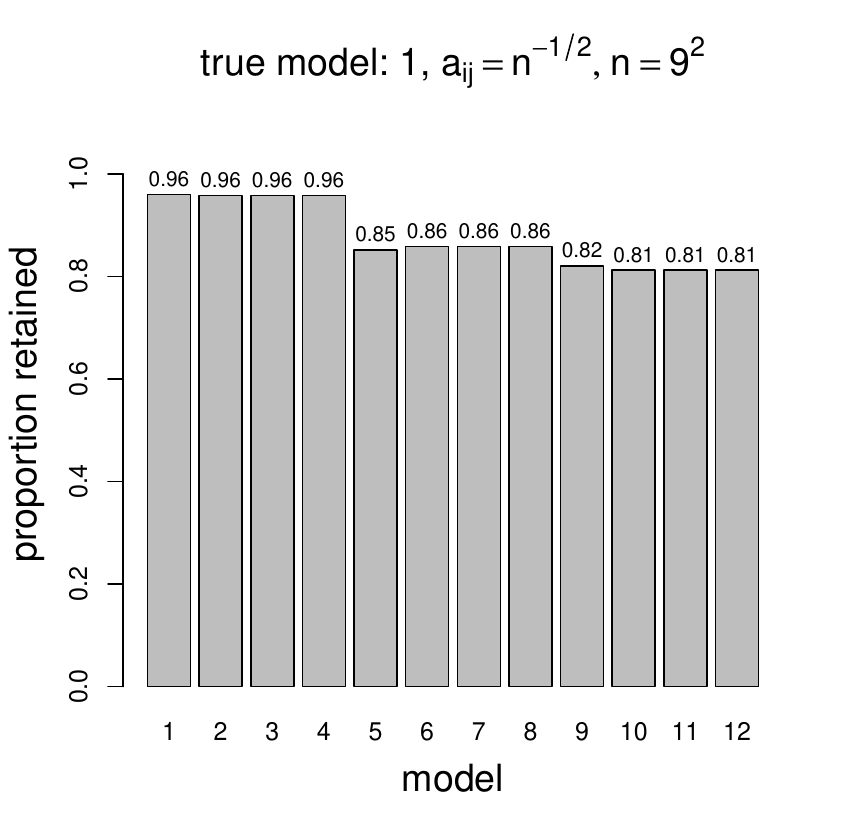}
	\end{subfigure}
	\hspace{-0.8cm}
	\begin{subfigure}[b]{0.49\textwidth}
		\centering
		\includegraphics[trim=0.0in 0.2in 0.0in 1in, clip, height=0.2\paperwidth]{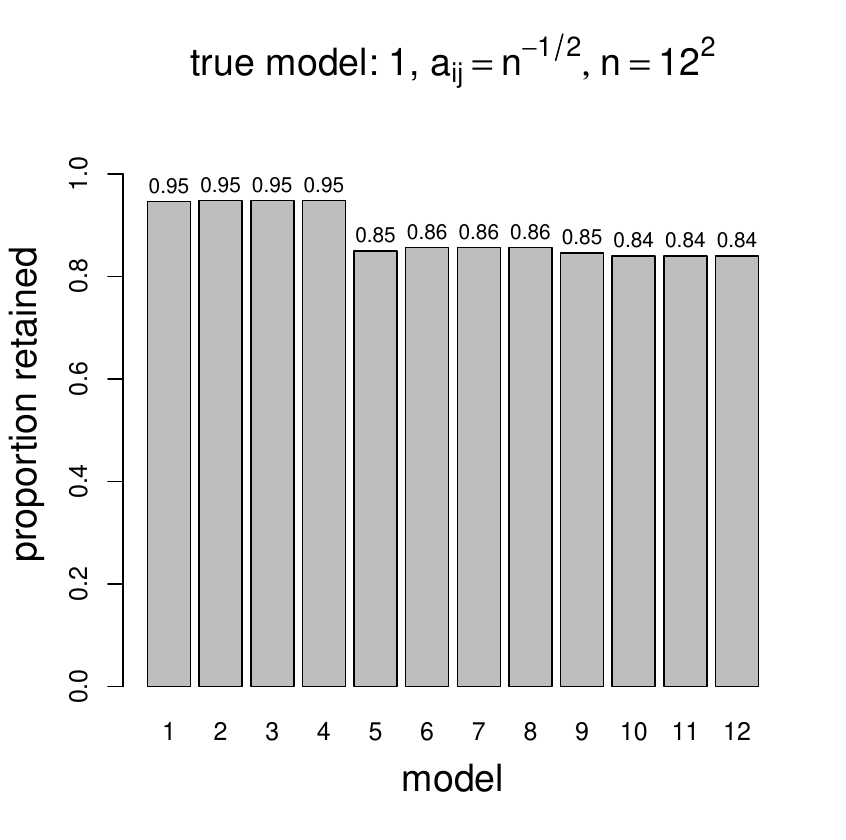}
	\end{subfigure} 
	\caption{Analogue of Figure \ref{fig:scaling n-1} with signal strengths $a_{ij}=n^{-1/2}$. \label{fig:scaling n-1/2} }
\end{figure}

\begin{figure}[h!]
	\centering
	\begin{subfigure}[b]{0.49\textwidth}
		\centering
		\includegraphics[trim=0.0in 0.2in 0.0in 1in, clip, height=0.2\paperwidth]{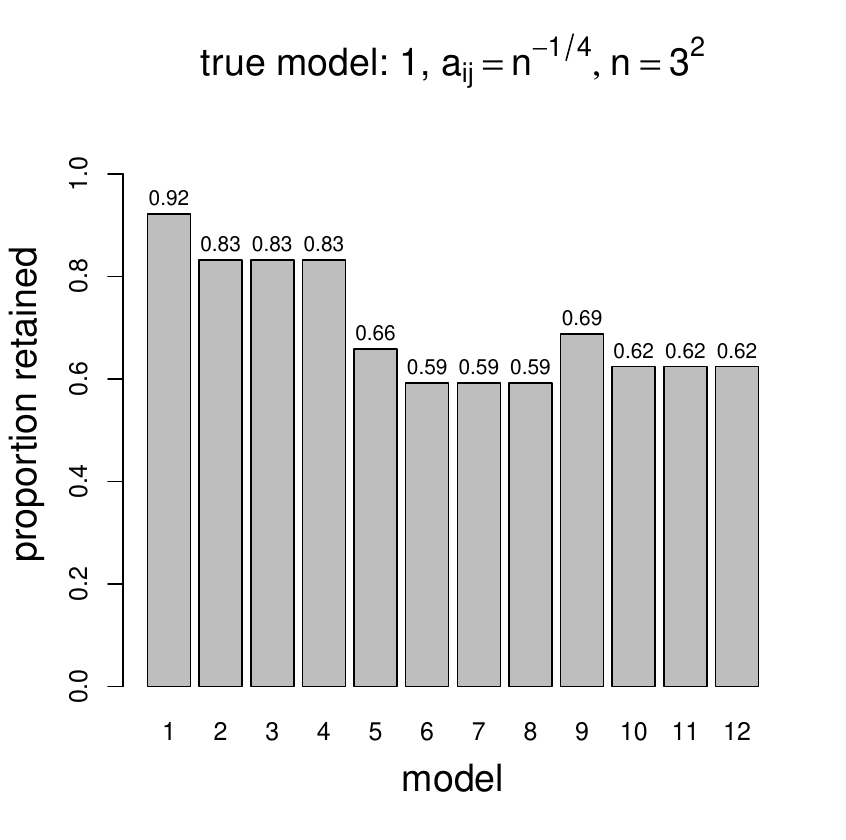}
	\end{subfigure}
	\hspace{-0.8cm}
	\begin{subfigure}[b]{0.49\textwidth}
		\centering
		\includegraphics[trim=0.0in 0.2in 0.0in 1in, clip, height=0.2\paperwidth]{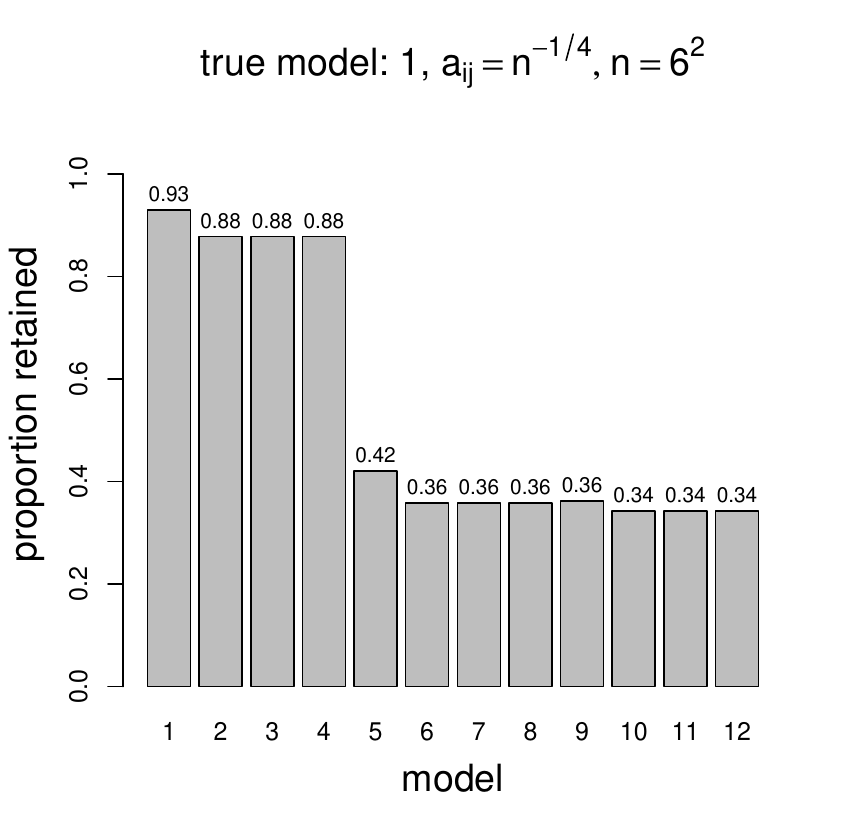}
	\end{subfigure} 
	\\[-0ex]
	\vspace{0.2cm}
	\par
	\centering
	\begin{subfigure}[b]{0.49\textwidth}
		\centering
		\includegraphics[trim=0.0in 0.2in 0.0in 1in, clip, height=0.2\paperwidth]{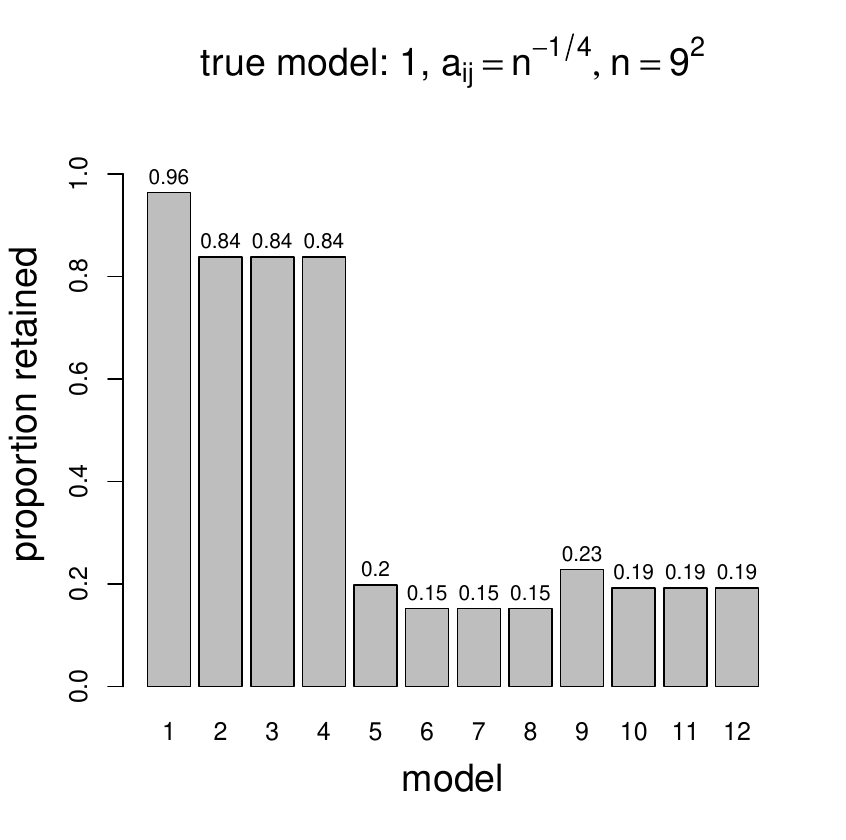}
	\end{subfigure}
	\hspace{-0.8cm}
	\begin{subfigure}[b]{0.49\textwidth}
		\centering
		\includegraphics[trim=0.0in 0.2in 0.0in 1in, clip, height=0.2\paperwidth]{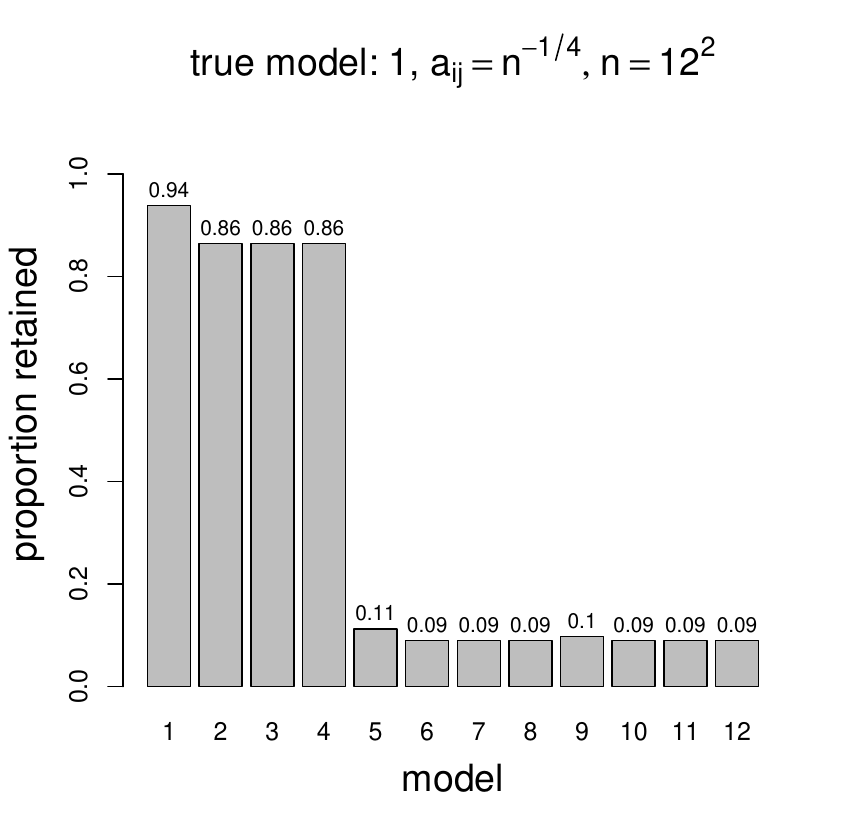}
	\end{subfigure} 
	\caption{Analogue of Figure \ref{fig:scaling n-1} with signal strengths $a_{ij}=n^{-1/4}$.  \label{fig:scaling n-1/4} }
\end{figure}

\begin{figure}[h!]
	\vspace{-0.2cm}
	\centering
	\begin{subfigure}[b]{0.49\textwidth}
		\centering
		\includegraphics[trim=0.0in 0.2in 0.0in 1in, clip, height=0.2\paperwidth]{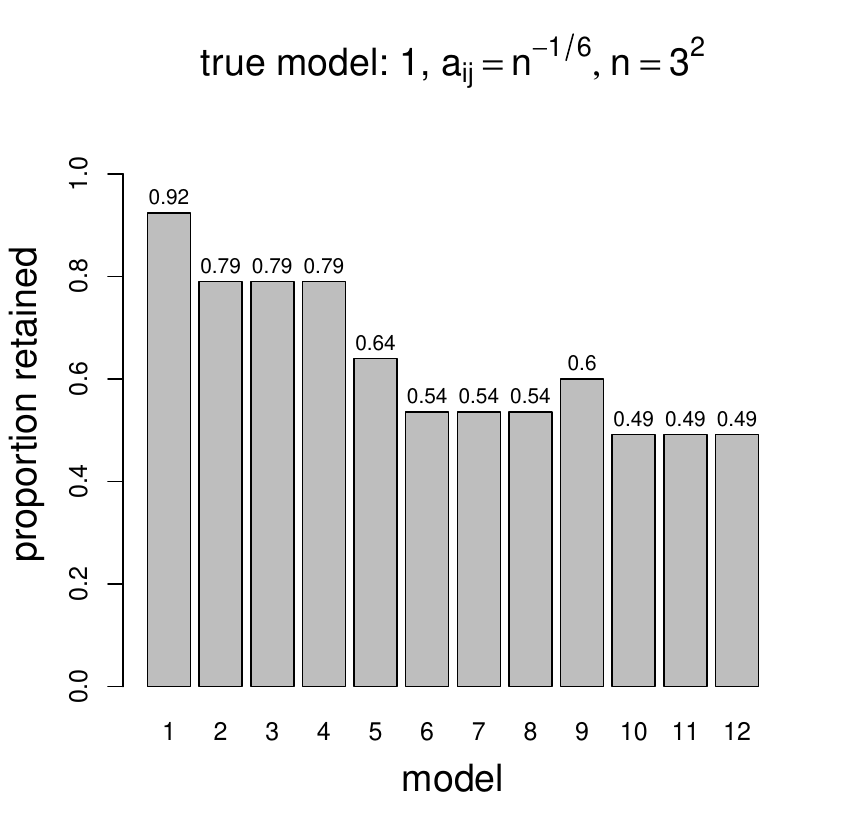}
	\end{subfigure}
	\hspace{-0.8cm}
	\begin{subfigure}[b]{0.49\textwidth}
		\centering
		\includegraphics[trim=0.0in 0.2in 0.0in 1in, clip, height=0.2\paperwidth]{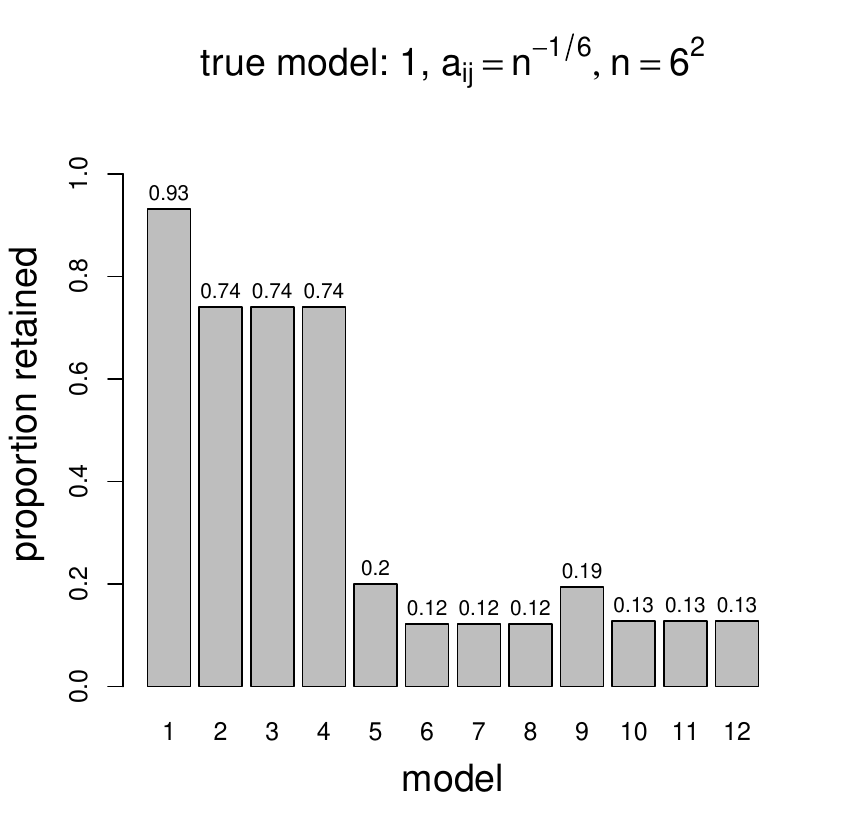}
	\end{subfigure} 
	\\[-0ex]
	\vspace{0.2cm}
	\par
	\centering
	\begin{subfigure}[b]{0.49\textwidth}
		\centering
		\includegraphics[trim=0.0in 0.2in 0.0in 1in, clip, height=0.2\paperwidth]{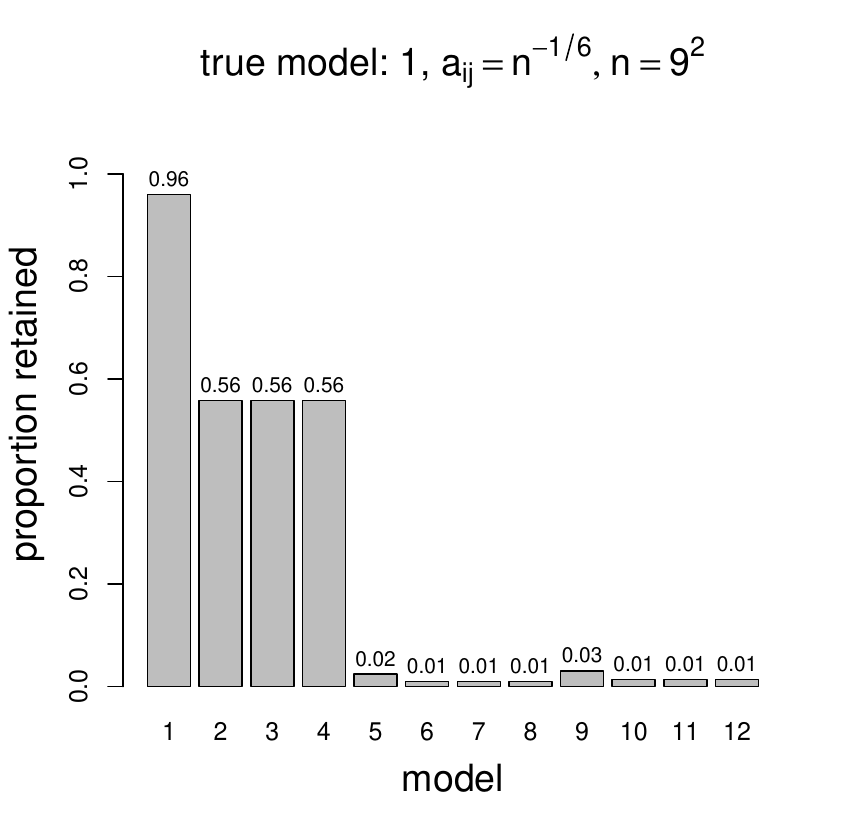}
	\end{subfigure}
	\hspace{-0.8cm}
	\begin{subfigure}[b]{0.49\textwidth}
		\centering
		\includegraphics[trim=0.0in 0.2in 0.0in 1in, clip, height=0.2\paperwidth]{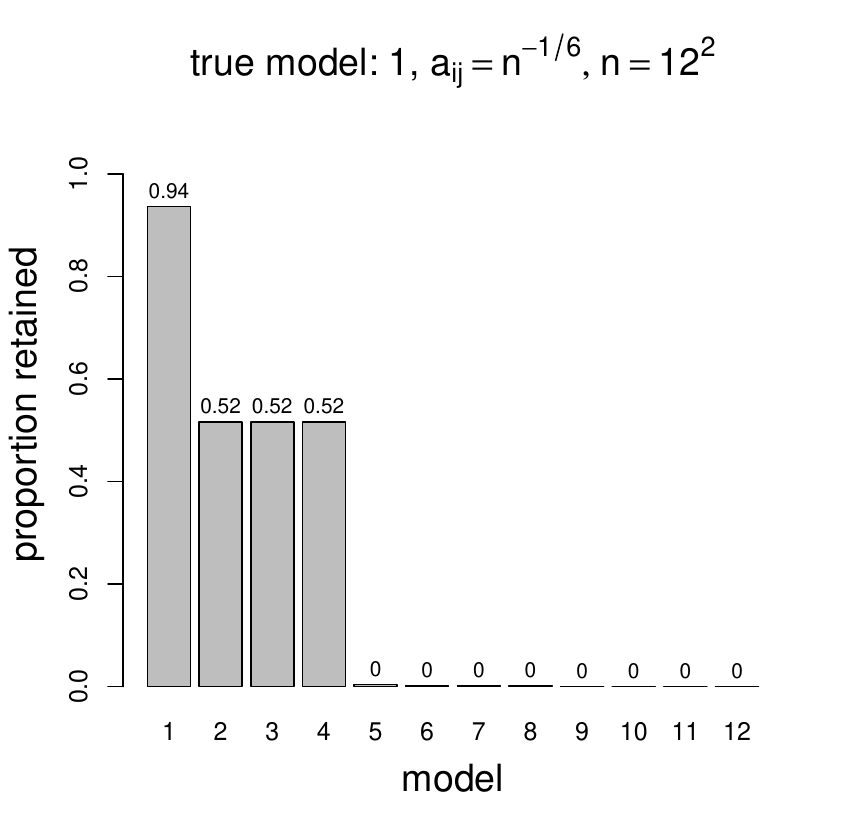}
	\end{subfigure} 
	\caption{Analogue of Figure \ref{fig:scaling n-1} with signal strengths $a_{ij}=n^{-1/6}$. \label{fig:scaling n-1/6} }
\end{figure}

\end{appendix}

\end{document}